\newcommand{\NN}{\mathbb{N}}
\newcommand{\RR}{\mathbb{R}}
\newcommand{\pa}{\partial}
\newcommand{\al}{\alpha}
\newcommand{\de}{\delta}
\newcommand{\ep}{\varepsilon}
\newcommand{\origin}{\sf{o}}
\newcommand{\deficit}{{\rm{def}}}
\newtheorem{theorem}{Theorem}[section]
\newtheorem{corollary}[theorem]{Corollary}
\newtheorem{lemma}[theorem]{Lemma}
\newtheorem{proposition}[theorem]{Proposition}
\newtheorem{definition}[theorem]{Definition}
\newtheorem{remark}[theorem]{Remark}
\theoremstyle{remark}
\DeclareMathOperator{\osc}{\mathrm{osc}}
\DeclareMathOperator{\diam}{\mathrm{diam}}
\newcommand{\oscH}{\osc(H)}
\title[Almost CMC hypersurfaces in Space forms]{Quantitative Stability for hypersurfaces with almost constant curvature  in Space forms}
\author{Giulio Ciraolo, Alberto Roncoroni, Luigi Vezzoni}
\date{\today}
\thanks{The first two authors have been supported by GNAMPA of INdAM. The third author has been supported by GNSAGA of INdAM}
\address{G. Ciraolo, Dipartimento di Matematica e Informatica, Universit\`a di Palermo, Via Archirafi 34, 90123 Palermo, Italy.} \email{giulio.ciraolo@unipa.it}
\address{A. Roncoroni,  Dipartimento di Matematica F. Casorati, Universit\`a di Pavia, Via Ferrata 5, 27100 Pavia, Italy.} \email{alberto.roncoroni01@universitadipavia.it} 
\address{L. Vezzoni, Dipartimento di Matematica G. Peano, Universit\`a di Torino, Via Carlo Alberto 10, 10123 Torino, Italy.} \email{luigi.vezzoni@unito.it}
\keywords{Space forms geometry, method of the moving planes, Alexandrov Soap Bubble Theorem, quantitative stability, mean curvature, pinching.}
    \subjclass{Primary 53C20, 53C21; Secondary 35B50, 53C24. }
\begin{document}

\maketitle

\begin{abstract}
The Alexandrov Soap Bubble Theorem asserts that the distance spheres are the only embedded closed connected hypersurfaces in space forms having constant mean curvature. The theorem can be extended to more general functions of the principal curvatures $f(k_1,\ldots,k_{n-1})$ satisfying suitable conditions.

In this paper  we give sharp quantitative estimates of proximity to a single sphere for Alexandrov Soap Bubble Theorem in space forms when the curvature operator $f$ is close to a constant. Under an assumption that prevents bubbling, the proximity to a single sphere is quantified in terms of the oscillation of the curvature function $f$.
Our approach provides a unified picture of quantitative studies of the method of moving planes in space forms.
\end{abstract}

\tableofcontents

%
%
%
\section{Introduction}
In the celebrated papers \cite{Al1, Al2}, Alexandrov proved the so-called soap bubble theorem:

\medskip 

\noindent {\bf Alexandrov theorem}: {\em round spheres are the only $C^2$-regular, connected, closed hypersurfaces embedded in the Euclidean space having constant mean curvature.}

\medskip 
As Alexandrov observed in \cite{Al0,Al2}, the result can be extended to space forms $\mathbb{M}_+^n$, i.e. to the hyperbolic space $\mathbb H^{n}$ and the hemisphere $\mathbb S_{+}^n$ (but not in the whole sphere). 
Here and in the rest of the paper, we denote the Euclidean space $\RR^n$, the hyperbolic space $\mathbb H^n$ and the sphere $\mathbb S^n$ by
 the same symbol $\mathbb M^n$, while when we write $\mathbb M^n_+$ we mean that we are considering $\RR^n$, $\mathbb H^n$ and the hemispere 
 $\mathbb S^n_+$.

Alexandrov's theorem has been widely studied and extended in several directions. It is well-known that the theorem is in general false for non-embedded submanifolds (see e.g. \cite{HTY} and \cite{wente} for classical counterexamples in higher dimension and in $\mathbb{R}^3$, respectively). However, for immersed hypersurfaces, one can add some condition in order to guarantee that $S$ is a sphere: in particular Hopf proved in \cite{Hopf}  that every constant mean curvature $C^2$-regular sphere {\em immersed} in the $3$-dimensional Euclidean space is necessary a round sphere (see also \cite{AR,MMPR1,MMPR2} for a very recent generalization of Hopf's theorem to simply-connected homogeneous $3$-manifolds), and Barbosa and DoCarmo \cite{BarbosaDoCarmo} proved that every compact, orientable and  stable hypersurface \emph{immersed} in $\mathbb{R}^n$ is a round sphere (see also \cite{BdCE} for generalizations of this result). Moreover there exists \emph{non-closed} constant mean curvature hypersufaces embedded in $\mathbb{R}^3$ which are not diffeomorphic to a sphere, like for instance the unduloids (see \cite{Delunay} and \cite{Delunay_bis} for the generalization to higher dimensions).

In \cite{korevar} and \cite{rosenberg}  it is showed that Alexandrov's theorem  generalizes to a large class of curvature operators (see also \cite{Al2,CY,hartman,Hs,Lieb,Ro2,Ro1,Suss,Yau}). More precisely, let $S$  be a $C^2$-regular, connected, closed hypersurface embedded in $\mathbb M^n_{+}$. 
Then $S$ is always the boundary of a relatively compact connected open set $\Omega \subset \mathbb M^n_+$ and we orient $S$ by using the normal vector field to $S$ inward with respect to $\Omega$. Let $\{\kappa_1,\dots \kappa_{n-1}\}$ be the principal curvatures of $S$ ordered increasingly.  
We denote by ${\sf H}_S$ one of the following functions: 
\begin{enumerate}
\item[i)] the mean curvature $H:=\tfrac{1}{n-1}\sum_{i}\kappa_i$;

\item[ii)] $f(\kappa_1,\dots,\kappa_{n-1})$, where 
$$
f\colon \{x=(x_1,\dots,x_{n-1})\in \mathbb \RR^{n-1}\,\,:\,\,x_1\leq x_2\leq\dots\leq x_{n-1} \}\to \RR\,,
$$ 
is a $C^2$-function such that 
$$
f(x)>0, \mbox{ if } x_i>0 \mbox{ for every }i=1,\dots,n-1
$$
and $f$ is concave on the component $\Gamma$ of $\{x\in\RR^{n-1}\,\,:\,\,f(x)>0\}$ containing  $\{x\in \RR^{n-1}\,\,:\,\, x_i>0\}$. 
\end{enumerate}   
For instance, if $H_r$ denotes the $r$-higher order curvature of $S$ defined as the elementary symmetric polynomial of degree $r$ in the principal curvatures of $S$, then $H_r^{1/r}$ satisfies ii).  
By using this notation, Alexandrov's theorem can be stated as follows:

\medskip 

{\bf Alexandrov theorem II}: {\em distance spheres are the only $C^2$-regular, connected, closed hypersurfaces embedded in $\mathbb M^n_{+}$ such that ${\sf H}_S$ is constant.}

\medskip 

The main result in this paper is the following theorem which  gives sharp stability estimates of proximity to a single sphere for Alexandrov theorem II.

\begin{theorem}\label{main}
Let $S$ be a $C^2$-regular, connected, closed hypersurface embedded in $\mathbb M^n_{+}$ satisfying a uniform touching ball condition of radius $\rho$. There exist constants $\ep,\, C>0$ such that if 
\begin{equation}\label{H quasi const}
{\osc}({\sf H}_S)
 \leq \ep,
\end{equation}
then there are two concentric balls $ B^d_{r}$ and $B^d_{R}$ of $\mathbb M^n_+$ such that
\begin{equation}\label{Bri Om Bre}
S \subset \overline{B}^{\,d}_{R} \setminus B^d_{r},
\end{equation}
and
\begin{equation}\label{stability radii}
R-r \leq C {\osc}({\sf H}_S).
\end{equation}
The constants $\ep$ and $C$ depend only on $n$ and upper bounds on $\rho^{-1}$ and on the area of $S$.
\end{theorem}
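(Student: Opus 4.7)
The plan is to adapt the quantitative method of moving planes (in the spirit of Ciraolo--Vezzoni's stability results in Euclidean space) to the space form setting, using the isometry group of $\mathbb{M}^n_+$ and totally geodesic hyperplanes in place of affine ones. First I would set up uniform a priori regularity: the touching ball condition of radius $\rho$ gives $C^{1,1}$ bounds, and elliptic regularity for the fully nonlinear curvature equation $f(\kappa_1,\dots,\kappa_{n-1}) = {\sf H}_S$ (exploiting strict ellipticity on the admissible cone $\Gamma$ together with the concavity of $f$) upgrades this to uniform $C^{2,\alpha}$ control on $S$, with constants depending only on $n$, $\rho^{-1}$, an upper bound on ${\sf H}_S$ implied by \eqref{H quasi const}, and the area bound. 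This regularity will guarantee uniform ellipticity constants for the linearized operators appearing later.

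Next, for each unit direction $\omega$ I would run the moving planes procedure: consider the one-parameter family of totally geodesic hyperplanes $\pi_t$ orthogonal to a geodesic in direction $\omega$, and slide $t$ up to the critical value $t^\star$ at which either (a) the reflected cap $S_{t^\star}^\star$ and $S$ are internally tangent at some interior point $p$, or (b) $\pi_{t^\star}$ becomes orthogonal to $S$ at some point $q \in \pi_{t^\star} \cap S$. At the critical position, both $S_{t^\star}^\star$ and $S$ can be locally written as graphs $u^\star \le u$ over a common tangent plane, and the difference $w = u - u^\star \ge 0$ satisfies a linear uniformly elliptic second-order equation $\mathcal{L} w = {\sf H}_S - {\sf H}_{S_{t^\star}^\star}$ with $\|\mathcal{L} w\|_\infty \le \osc({\sf H}_S)$, whose coefficients are controlled by the first step.

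A quantitative Hopf/Serrin boundary lemma, combined with the Harnack inequality, then yields the linear tangency estimate $\sup_U w \le C\,\osc({\sf H}_S)$ on a fixed neighbourhood of the contact point; chaining overlapping graph patches (whose size is controlled by the touching ball radius) propagates the bound to the entire reflected cap, so that $\pi_{t^\star}$ is an approximate hyperplane of symmetry for $S$ with symmetry defect linear in $\osc({\sf H}_S)$. Running this for sufficiently many directions through a common reference point and intersecting the critical hyperplanes yields an approximate centre $p^\star$; the approximate rotational symmetry around $p^\star$, together with the area bound (which rules out the ``bubbling'' scenario in which a connected $S$ could resemble a long dumbbell limiting to two disjoint spheres, incompatible with the touching ball condition), produces concentric geodesic balls satisfying \eqref{Bri Om Bre} and the linear radius estimate \eqref{stability radii}.

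The main obstacle is case (b), where the tangency point lies on the critical hyperplane: one needs a \emph{quantitative} version of Serrin's corner lemma that delivers a \emph{linear} (not merely H\"olderian) dependence of $w$ on $\osc({\sf H}_S)$, with constants depending only on the admissible parameters. The fully nonlinear nature of $f$, together with the non-flat ambient geometry, makes this the technical heart of the argument: one typically works in a convenient conformal model of $\mathbb{M}^n_+$ (half-space for $\mathbb{H}^n$, stereographic for $\mathbb{S}^n_+$) in which geodesic reflections act as explicit isometries, and one carefully tracks how the conformal factor interacts with the linearized curvature operator so as to preserve ellipticity constants uniformly in the admissible class.
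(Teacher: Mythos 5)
Your overall strategy is the one the paper follows: quantitative moving planes in a conformal model of $\mathbb M^n_+$, Harnack on the difference of the two graphs at the tangency point, chaining overlapping graph patches of size controlled by $\rho$ to propagate the estimate over the reflected cap, and then intersecting critical hyperplanes to get an approximate centre and the annulus bound. However, there is a genuine gap exactly at the point you flag as the ``technical heart'': the case where the tangency occurs at (or near) the critical hyperplane. You propose to resolve it with a quantitative Serrin corner lemma giving \emph{linear} dependence on $\osc({\sf H}_S)$, but no such lemma is available in this generality; the known quantitative corner lemmas (e.g.\ Aftalion--Busca--Reichel) yield only logarithmic or H\"older-type rates, which is precisely why earlier stability results for Alexandrov's theorem were not optimal. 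Deferring to this tool leaves the proof incomplete at its decisive step.

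The paper's resolution is different and does not use a corner lemma at all. Near $\partial\Sigma$ one projects a neighbourhood of the tangency point onto the hyperplane $\{x_n=0\}$ and applies the Carleson estimate (boundary Harnack inequality) of Berestycki--Caffarelli--Nirenberg in the projected domain $E_r$, whose relevant boundary portion is the projection of $S\cap\pi$; combined with interior Harnack and Schauder estimates this gives the linear bound $\|u-\hat u\|_{C^1}\le C\,\osc({\sf H}_S)$ up to the critical hyperplane. Making this work requires a priori curvature bounds for the intersection $S\cap\pi$ viewed inside $\pi$ and for its Euclidean projection --- this is the content of the paper's Section 4 on curvatures of projected surfaces in conformally Euclidean metrics, together with the quantitative transversality statement $0\le g_p(N_p,\omega_p)\le 1/4$ on $\partial\Sigma$. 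These ingredients are absent from your outline. Two further (more routine) pieces you gloss over: the connectivity of the slightly shrunk cap $\Sigma_\delta$, needed for the chaining argument, and the quantitative argument (a volume/slicing estimate in the spirit of Ciraolo--Figalli--Maggi--Novaga) showing that the approximate centre obtained from $n$ coordinate directions is within $C\,\osc({\sf H}_S)$ of \emph{every} critical hyperplane, which is what converts directionwise approximate symmetry into the estimate $R-r\le C\,\osc({\sf H}_S)$.
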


The {\em uniform touching ball condition} of radius $\rho$ in theorem \ref{main} means that at any point of $S$ there exist two 
balls of radius $\rho$ both tangent to $S$ one from inside and one from outside. Since the constant $\rho$ is fixed, a bubbling phenomenon can not appear (see for instance \cite{CirMag}). As can be shown by a calculation for ellipsoids, the estimate in \eqref{stability radii} is optimal and it is new in the general setting of theorem \ref{main}. In the Euclidean space, quantitative studies for the mean curvature were avalaible in literature only for convex domains and they were not optimal (see the discussion in \cite{JEMS}); hence \eqref{stability radii} remarkably improves these results.

Theorem \ref{main} follows the research line initiated in \cite{JEMS} and pursued  in \cite{INDIANA}. The aim of the present paper is twofold: on  
the one hand we generalize the results in \cite{JEMS} and \cite{INDIANA} to the hemisphere and on the other hand we extend our analysis to 
a larger class of  curvature operators (in \cite{JEMS} and \cite{INDIANA} we only focused on the mean curvature). To achieve this goal, we provide a uniform quantitative approach of the method of the moving planes in space forms. Together with the wide generality of the assumptions of this theorem, the unified treatment of the problem in space forms is a major point of this manuscript.

As a consequence of theorem \ref{main} we have the following
\begin{corollary}\label{main2}
Let $\rho_0$, $A_0 > 0$ and $n\in\mathbb{N}$ be fixed. There exists $\varepsilon > 0$, depending on $n$, $\rho_0$ and $A_0$, such that if $S$ is a connected closed $C^2$ hypersurface embedded in $\mathbb{M}^n_{+}$ having area bounded by $A_0$, satisfying a touching ball condition of radius $\rho\geq\rho_0$, and such that
\begin{equation}
{\osc}({\sf H}_S)
 \leq \ep\,,
\end{equation}
then $S$ is $C^{1}$-close to a sphere and there exists a $C^2$-regular map $\Psi: \partial B_r^d\to \mathbb{R}$ such that
$$
F(p) = \exp_x(\Psi(p)N_p) 
$$
defines a $C^2$-diffeomorphism from $B^d_r$ to $S$ and
\begin{equation}\label{fristLipbound}
\|\Psi\|_{C^1(\partial B_r)} \leq C\, {\osc}({\sf H}_S)^{1/2}\,,
\end{equation}
where $N$ is a normal vector field to $\partial B^d_r$. 
\end{corollary}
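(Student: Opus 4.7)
My approach is to combine the geometric trapping provided by Theorem~\ref{main} with the $C^{1,1}$ regularity encoded in the touching ball condition and a classical interpolation inequality. The $1/2$ exponent in \eqref{fristLipbound} is the telltale sign that the argument is of interpolation type, producing a quantitative $C^1$-bound out of $C^0$ smallness and $C^2$ boundedness.

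First, fix $\rho_0$ and $A_0$ and shrink $\varepsilon$ in Theorem~\ref{main} so that
\begin{equation*}
S \subset \overline{B}^{\,d}_{R} \setminus B^d_{r}, \qquad R - r \leq C\,{\osc}({\sf H}_S),
\end{equation*}
with the thickness $R-r$ much smaller than $\rho_0$ and than the injectivity radius around the common center of $B_r^d$ and $B_R^d$ in $\mathbb{M}^n_+$. Note that $R$ itself is controlled from above by $\rho_0$ and $A_0$: an interior tangent ball of radius $\rho_0$ at every point of $S$ together with the area bound force $R$ to be bounded.

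Second, represent $S$ as a normal graph over $\partial B_r^d$. The uniform $\rho_0$-touching ball condition implies that at every point of $S$ the tangent hyperplane makes an angle with the radial direction emanating from the common center which is at most a constant times $\sqrt{(R - r)/\rho_0}$; this follows from an elementary planar argument comparing the tangent ball of radius $\rho_0$ to the trapping annulus. Hence, for $\varepsilon$ small enough, the normal exponential map
\begin{equation*}
F(p) = \exp_p\bigl(\Psi(p)\, N_p\bigr),
\end{equation*}
with $\Psi(p)$ the first positive distance to $S$ along the outward normal $N_p$, is well-defined, single-valued and a $C^2$-diffeomorphism onto $S$. The inclusion \eqref{Bri Om Bre} immediately yields
\begin{equation*}
\|\Psi\|_{C^0(\partial B_r^d)} \leq R - r \leq C\,{\osc}({\sf H}_S),
\end{equation*}
while the bound $|\kappa_i|\leq \rho_0^{-1}$ on the principal curvatures of $S$ delivered by the touching ball condition translates, after this change of variables, into a uniform $C^2$-bound $\|\Psi\|_{C^2(\partial B_r^d)} \leq C$.

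Third, the standard interpolation inequality on the compact manifold $\partial B_r^d$,
\begin{equation*}
\|\Psi\|_{C^1} \leq C\, \|\Psi\|_{C^0}^{1/2}\, \|\Psi\|_{C^2}^{1/2},
\end{equation*}
combined with the two bounds above, yields \eqref{fristLipbound}. The main obstacle is the second step: realizing the normal-graph representation uniformly in the space form setting, including the hyperbolic and spherical cases, requires a clean quantitative transversality lemma comparing the normal lines from $\partial B_r^d$ with $S$ under the thin-annulus pinching and the uniform touching balls. The hyperbolic and spherical factors in the Jacobi fields along such normals must be controlled, but once the trapping thickness is chosen sufficiently small relative to the injectivity radius and to $\rho_0$, comparison with constant-curvature Jacobi fields makes this routine, and the remaining interpolation argument is soft.
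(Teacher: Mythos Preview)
Your argument is correct, but it takes a different route from the paper. The paper does \emph{not} use interpolation or the touching-ball geometry at this stage; instead it makes one more pass with the method of moving planes. Concretely, with $\mathcal O$, $r$, $R$ already in hand from Theorem~\ref{thm_novaga}, the paper fixes $0<t<r$ and considers, for each $p\in S$, the cone $\mathcal C^-(p)$ swept by geodesics from $p$ tangent to $\partial B_t^d(\mathcal O)$. If some such geodesic were not contained in $\Omega$, applying the moving planes in the direction of that geodesic and invoking Lemma~\ref{lemmarta} would force $d(\mathcal O,\pi_v)\ge r-t$, contradicting Lemma~\ref{lemmarta} once $r-t>C\deficit(\Omega)$. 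Choosing $t=r-\sqrt{C\deficit(\Omega)}$ then yields a uniform interior cone of opening $\pi-\sqrt{C\deficit(\Omega)}$ at every $p\in S$, which gives the Lipschitz bound on $\Psi$ and hence \eqref{fristLipbound}. Your approach is more elementary: the elementary annulus-versus-touching-ball comparison you describe already forces the normal of $S$ to be within angle $O(\sqrt{(R-r)/\rho_0})$ of radial, which by itself delivers both the graphicality and the gradient bound (so your interpolation step, while correct, is in fact redundant). The trade-off is that the paper's argument fits the (ASP) framework of Section~\ref{section_6} and would apply to other deficits without an explicit two-sided touching-ball hypothesis, whereas your argument exploits that hypothesis directly and avoids re-running the moving planes.
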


As already mentioned, we tackle the problem by doing a quantitative study of the method of the moving planes, i.e. we study the original proof of Alexandrov from a quantitative point of view. There are other possible approaches for proving the symmetry result which are based on integral and geometric identities. The interested reader can refer to \cite{MR,Re,Ro1,Ro2}, and to \cite{Br} for a recent generalization (see also \cite{DelgadinoMaggi} for minimal assumptions on the regularity of $S$). The approach in \cite{Re} has been exploited in \cite{CirMag} to tackle the problem of bubbling (see also \cite{DMMN} and \cite{KrummelMaggi}). Symmetry and quantitative studies of proximity to a single sphere by using an integral approach can be found in \cite{ BianchiniCiraoloSalani,CFSW,CFMN,Feldmann,16BCS,17BCS,magnanini_survey,magnanini_poggesi1,magnanini_poggesi2,Qui Xia}.

\medskip

The paper is organized as follows. In section \ref{moving} we review the method of the moving planes in space forms and set up some notation. In section \ref{section3} we give technical local quantitative estimates in space forms. In section \ref{subsect Luigi} we find estimates on curvatures of projected surfaces in conformally Euclidean spaces. In section \ref{sect5} we prove the approximate symmetry in one direction. In section \ref{section_6} we show how to prove global approximate symmetry result by using the approximate symmetry in any direction. In section \ref{section_proof_main2} we complete the proof of main theorems.

We will use the following models of space forms: 
\begin{itemize}
\item $\mathbb H^{n}$ is the half-space $\{x\in \RR^{n}\,\,:\,\,x_n>0\}$ with the Riemannian metric 
\begin{equation}\label{hyp metric}
g_x=\tfrac{1}{x_n}\,\langle\cdot,\cdot\rangle\,,\,\, \mbox{ for every }x\in \RR^n
\end{equation} 
where$\langle\cdot,\cdot\rangle$ is the Euclidean product on $\RR^n$; 

\vspace{0.2cm}
\item $\mathbb S^n$ is the $n$-dimensional  unitary sphere  $\{x\in \RR^{n+1}\,\,:\,\, |x|=1\}$ with the round metric $g$ induced by the Euclidean metric in $\RR^{n+1}$. Here we recall that if we
consider the stereographic projection $\mathbb S^n\backslash\{\mbox{one point}\}\to \RR^n$, then $g$ is projected to 
the Riemann metric 
\begin{equation}\label{round metric}
g_x=\frac{4}{(1+|x|^2)^2}\,\langle \cdot,\cdot\rangle\,,\,\, \mbox{ for every }x\in \RR^n	\,. 
\end{equation} 
\end{itemize} 
In order to simplify the reading of the paper, there is a list of symbols at the end of the manuscript.
\bigskip

\noindent
{\bf Acknowledgements} The authors wish to thank Harold Rosenberg for
suggesting the problem studied in this paper. The paper was completed while the second author was visiting the Department of Mathematics of the ETH in Z\"urich, and he wishes to thank the institute for hospitality and support.

\section{The method of the moving planes}\label{moving}
In this preliminary section we recall the method of the moving planes in $\mathbb M^{n}_+$.

\medskip  
We begin by recalling the definition of {\em center of mass} in the context of Riemannian geometry (see e.g. \cite{Ka} for more details). 

Let $(M,g)$ be an oriented complete Riemannian manifold and let $\Omega$ be a bounded domain (i.e. a bounded connected open set). Let $P_\Omega\colon M\to \RR$  be the function   
$$
P_\Omega(x)=\frac{1}{2\,|\Omega|_g} \int_\Omega d(x,a)^2\,da \,,
$$
where $|\Omega|_g$ is the volume of $\Omega$ with respect to $g$.  Then the gradient of $P_\Omega$ takes the following expression 
\begin{equation}\label{gradP}
\nabla P_\Omega(x)=-\frac{1}{|\Omega|_g}\int_{\Omega} \exp^{-1}_x(a) \,da\,. 
\end{equation}
In some cases $P_\Omega$ is a convex map and it attains the minimum at only one point $\mathcal O$, which is usually called the {\em center of mass} of $\Omega$. For instance this occurs in the following cases: 
\begin{itemize}
\item all the sectional curvatures of $M$ are nonpositive; 

\vspace{0.1cm}
\item $\Omega$ is contained in a geodesic ball of radius $r<\tfrac12 \min\left\{{\rm inj}\, M,\tfrac{\pi}{2\sqrt{K}}\right\}$, where $K$ is an upper bound on the sectional curvatures of $M$;

\vspace{0.1cm}
\item $M=\mathbb S^n$ and $\Omega$ is contained in $\mathbb S^n_+$. 
\end{itemize}

We further recall that a Riemannian manifold $(M,g)$ is a symmetric space  if for every $p\in M$ there exists an isometry $f\colon  M\to M$ such that 
$f(p)=p$ and $f_{*|p}=-{\rm Id}$. 
\begin{lemma}\label{symspaces} Let $(M,g)$ be a symmetric space, $\Omega$ a bounded domain in $M$ and $x\in M$ be such that $\nabla P_{\Omega}(x)=0$. Assume that for every hyperplane $\pi$ in $M$ not containing $x$ there exists a hyperplane $\pi_1$ passing through $x$ and such that  $\pi\cap \pi_1\cap \Omega=\emptyset$. Then every hyperplane of symmetry for $\Omega$ contains $x$. 
\end{lemma}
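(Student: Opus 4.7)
The plan is to argue by contradiction: assume a hyperplane $\pi$ of symmetry for $\Omega$ does not contain $x$, and derive $\nabla P_\Omega(x)\neq 0$. Applying the standing hypothesis, fix a hyperplane $\pi_1$ through $x$ with $\pi\cap\pi_1\cap\Omega=\emptyset$, and let $\nu\in T_xM$ be a unit normal to $\pi_1$. Since $\pi_1$ is totally geodesic through $x$, the two open half-spaces $H^{\pm}\subset M\setminus\pi_1$ are characterized by $\pm\langle\exp_x^{-1}(y),\nu\rangle>0$. The goal will be to orient $\nu$ so as to make $\langle\nabla P_\Omega(x),\nu\rangle\neq 0$.

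Starting from \eqref{gradP}, performing the isometric, volume-preserving change of variable $a\mapsto R_\pi(a)$ and averaging produces
\begin{equation*}
-|\Omega|_g\,\langle\nabla P_\Omega(x),\nu\rangle
=\int_\Omega\langle\exp_x^{-1}(a),\nu\rangle\,da
=\tfrac12\int_\Omega\bigl\langle\exp_x^{-1}(a)+\exp_x^{-1}(R_\pi(a)),\nu\bigr\rangle\,da.
\end{equation*}
For every $a\in M$ the points $a$ and $R_\pi(a)$ are joined by a geodesic crossing $\pi$ perpendicularly at a midpoint $m(a)\in\pi$. The next step is to show, by direct computation in each space form model, that
\begin{equation*}
\mathrm{sign}\,\bigl\langle\exp_x^{-1}(a)+\exp_x^{-1}(R_\pi(a)),\nu\bigr\rangle
=\mathrm{sign}\,\langle\exp_x^{-1}(m(a)),\nu\rangle,
\end{equation*}
i.e.\ this sign is positive when $m(a)\in H^+$ and negative when $m(a)\in H^-$; in $\RR^n$ this reduces to the exact identity $\exp_x^{-1}(a)+\exp_x^{-1}(R_\pi(a))=2\,\exp_x^{-1}(m(a))$, while in the curved cases one multiplies by the positive Jacobian factors coming from $d/\sinh d$ or $d/\sin d$.

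The final step is to invoke the assumption $\pi\cap\pi_1\cap\Omega=\emptyset$ to orient $\nu$ so that the portion of $\pi$ meeting a neighbourhood of $\Omega$ sits entirely in $H^-$: in $\RR^n$ one takes $\pi_1$ parallel to $\pi$, in $\HH^n$ ultraparallel, and in $\Sbb^n_+$ one arranges the codimension-two great sphere $\pi\cap\pi_1$ to lie in the antipodal hemisphere. With this orientation every midpoint $m(a)$ for $a\in\Omega$ lies in $\pi\cap H^-$, so the integrand in the display above is strictly negative and $\langle\nabla P_\Omega(x),\nu\rangle>0$, the required contradiction. The main obstacle is the pointwise sign identity in the curved cases, which rests on a careful analysis of the exponential map (equivalently, a triangle comparison argument); the hemispherical case is additionally delicate, since $\pi$ and $\pi_1$ necessarily intersect in $\Sbb^n$, forcing one to exploit the confinement $\Omega\subset\Sbb^n_+$ to obtain the required one-sidedness.
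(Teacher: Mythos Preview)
Your approach differs from the paper's in a basic way: the paper exploits the \emph{geodesic symmetry} $f$ at $x$ (the defining isometry of a symmetric space, with $f(x)=x$ and $f_{*|x}=-\mathrm{Id}$), whereas you only use the reflection $R_\pi$ in the hyperplane of symmetry of $\Omega$. The paper splits $\Omega$ into three pieces $\Omega_1,\Omega_2,\Omega_3$ by the two hyperplanes, gets $|\Omega_1|+|\Omega_2|=|\Omega_3|$ from the $\pi$-symmetry, and then uses $f$ together with $\nabla P_\Omega(x)=0$ to obtain the incompatible relation $|\Omega_1|=|\Omega_2|+|\Omega_3|$. No pointwise sign identity and no model-by-model computation is needed; the symmetric-space hypothesis is used directly and the argument is uniform.

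Your route has a genuine gap in the curved cases. The key claim
\[
\mathrm{sign}\,\bigl\langle\exp_x^{-1}(a)+\exp_x^{-1}(R_\pi(a)),\nu\bigr\rangle
=\mathrm{sign}\,\langle\exp_x^{-1}(m(a)),\nu\rangle
\]
is exact in $\RR^n$, but your justification for $\HH^n$ and $\Sbb^n$ (``multiply by the positive Jacobian factors coming from $d/\sinh d$ or $d/\sin d$'') does not work: in the hyperboloid model one has $\langle\exp_x^{-1}(a),\nu\rangle=\tfrac{d_a}{\sinh d_a}\langle a,\nu\rangle_L$, so the two summands carry \emph{different} positive weights $d_a/\sinh d_a$ and $d_{a'}/\sinh d_{a'}$ (since $x\notin\pi$ forces $d_a\neq d_{a'}$). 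These weights do not factor out, and a weighted sum of two real numbers of opposite sign can have either sign; you would need a genuine comparison inequality, not a Jacobian remark. The spherical step is worse: two distinct great hyperspheres in $\Sbb^n$ always intersect in a great $(n-2)$-sphere, and no great $(n-2)$-sphere lies entirely in an open hemisphere, so ``arranging $\pi\cap\pi_1$ to lie in the antipodal hemisphere'' is impossible in general. Finally, the lemma only \emph{asserts the existence} of some $\pi_1$ with $\pi\cap\pi_1\cap\Omega=\emptyset$; you are not free to insist that $\pi_1$ be parallel/ultraparallel, so even in $\RR^n$ and $\HH^n$ you are proving a weaker statement than the one claimed.
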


\begin{proof}
Assume by contradiction that there exists a hyperplane $\pi$ of symmetry for $\Omega$ not containing $x$. Let $\pi_1$ be a hyperplane passing through $x$ and disjoint from $\pi$ inside $\Omega$, i.e. $\pi\cap \pi_1\cap \Omega=\emptyset$. Since $\pi_1$ and $\pi$ are disjoint, they subdivide $\Omega$ in three disjoint subsets $\Omega_1$, $\Omega_2$ and $\Omega_3$, with $|\Omega_i|_g>0,\, i=1,2,3$. Since $\Omega$ is symmetric about $\pi$, we have that 
$$
|\Omega_1|_g+|\Omega_2|_g=|\Omega_3|_g.
$$
Moreover formula \eqref{gradP} implies 
\begin{equation}\label{integrale}
\int_{\Omega_1} \exp^{-1}_{x}(a) \,da=-\int_{\Omega_2\cup\Omega_3} \exp^{-1}_{x}(a) \,da\,.
\end{equation}
Let $f\colon M\to M$ be the isometry such that $f(x)=x$ and $f_{|*x}=-{\rm Id}$. Then 
$$
\exp_x^{-1}f(a)=-\exp_x^{-1}(a)
$$
and 
$$
-\int_{\Omega_2\cup\Omega_3} \exp^{-1}_{x}(a) \,da=\int_{f(\Omega_2\cup\Omega_3)} \exp^{-1}_{x}(a) \,da\,.
$$
Therefore \eqref{integrale} implies 
$$
|\Omega_1|_g=|\Omega_2|_g+|\Omega_3|_g.
$$
which gives a contradiction since $|\Omega_2|_g>0$.
\end{proof}

Lemma \ref{symspaces} can be in particular applied in space forms $\mathbb M^n_+$, where we have the uniqueness of the center of mass.

\begin{proposition}[Characterization of the distance balls in  $\mathbb M^n_{+}$]\label{aboutO}
Let $S = \partial\Omega$ be a $C^2$-regular, connected, closed hypersurface embedded in $\mathbb{M}^{n}_+$, where $\Omega$ is a relatively compact domain. Assume that for every geodesic path $\gamma\colon \mathbb R\to \mathbb M^n$ there exists a hyperplane $\pi$ orthogonal to $\gamma$ such that  $S$ is symmetric about $\pi$.
Then $S$ is a distance  sphere about $\mathcal O$. 
\end{proposition}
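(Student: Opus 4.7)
The plan is to combine the hypothesis with Lemma \ref{symspaces} to force $S$ to be invariant under the full isotropy subgroup of isometries of $\mathbb M^n$ fixing the center of mass $\mathcal O$, after which the conclusion is immediate because the orbits of that group are distance spheres centered at $\mathcal O$.

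First I would invoke Lemma \ref{symspaces}. Since $\mathbb M^n_+$ is a symmetric space and $\Omega$ is a relatively compact domain lying in a regime where the center of mass is well defined (either nonpositive curvature, or $\Omega\subset\Sbb^n_+$), the point $\mathcal O$ is uniquely determined. A direct check in each of the three models shows that the geometric separation hypothesis of Lemma \ref{symspaces} holds for every hyperplane $\pi$ not containing $\mathcal O$: in $\RR^n$ take $\pi_1$ parallel to $\pi$; in $\HH^n$ take $\pi_1$ ultraparallel to $\pi$ through $\mathcal O$; in $\Sbb^n_+$ pick $\pi_1$ passing through $\mathcal O$ and through some point of $\pi\cap\Sbb^n_+$ lying outside $\overline\Omega$ (which exists since $\overline\Omega$ is a proper compact subset of the open hemisphere). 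Lemma \ref{symspaces} then yields: every hyperplane of symmetry of $S=\partial\Omega$ passes through $\mathcal O$.

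Next, I would exploit the hypothesis applied to geodesics through $\mathcal O$. Given any geodesic $\gamma$ with $\mathcal O\in\gamma$, by assumption there exists a hyperplane $\pi_\gamma$ orthogonal to $\gamma$ and of symmetry for $S$, and by the previous step $\pi_\gamma\ni\mathcal O$. Since a totally geodesic hyperplane and a transverse geodesic meet in a single point, $\pi_\gamma\cap\gamma=\{\mathcal O\}$ and $\pi_\gamma$ is orthogonal to $\gamma$ exactly at $\mathcal O$. Consequently $T_{\mathcal O}\pi_\gamma$ is the hyperplane in $T_{\mathcal O}\mathbb M^n$ orthogonal to $\dot\gamma$ at $\mathcal O$. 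As $\gamma$ varies among geodesics through $\mathcal O$, its initial velocity sweeps every direction in $T_{\mathcal O}\mathbb M^n$, and thus every totally geodesic hyperplane through $\mathcal O$ is a hyperplane of symmetry for $S$.

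Finally, the group generated by reflections across all totally geodesic hyperplanes through $\mathcal O$ coincides with the full isotropy subgroup $G_{\mathcal O}\cong O(n)$ of $\mathrm{Isom}(\mathbb M^n)$ fixing $\mathcal O$; its orbits in $\mathbb M^n$ are exactly $\{\mathcal O\}$ and the distance spheres centered at $\mathcal O$. Since $S$ is a connected $(n-1)$-dimensional submanifold invariant under $G_{\mathcal O}$, it must agree with one of these $(n-1)$-dimensional orbits, i.e.\ with a single distance sphere about $\mathcal O$. The only slightly delicate step is verifying the separation condition of Lemma \ref{symspaces} in the spherical case; once that is established, the symmetrization argument is essentially algebraic.
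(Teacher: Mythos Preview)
Your approach is essentially the same as the paper's: use Lemma~\ref{symspaces} to force every hyperplane of symmetry to pass through the center of mass $\mathcal O$, then apply the hypothesis to geodesics through $\mathcal O$ to obtain invariance under all reflections in hyperplanes through $\mathcal O$, and conclude via the isotropy group. Your write-up is in fact more careful than the paper's on the point that the orthogonal symmetry hyperplane must meet $\gamma$ exactly at $\mathcal O$.

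There is, however, a genuine gap in your verification of the separation hypothesis of Lemma~\ref{symspaces} in the spherical case. You propose to choose $\pi_1$ through $\mathcal O$ and through ``some point of $\pi\cap\Sbb^n_+$ lying outside $\overline\Omega$''. For $n\geq 3$ the intersection $\pi\cap\pi_1$ is an $(n-2)$-dimensional great sphere, not a pair of points, so forcing one point of $\pi$ to lie on $\pi_1$ and outside $\overline\Omega$ says nothing about whether the rest of $\pi\cap\pi_1$ avoids $\Omega$. (Even for $n=2$ you would need to control the second, antipodal intersection point.) The paper sidesteps this by simply asserting that Lemma~\ref{symspaces} applies in space forms; if you want an honest argument, the cleanest route is to bypass the separation hypothesis entirely and observe directly that any isometry preserving $\Omega$ preserves the function $P_\Omega$ and hence fixes its unique minimizer $\mathcal O$, so reflections of symmetry must contain $\mathcal O$.
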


\begin{proof}
In view of proposition \ref{symspaces} any hyperplane of symmetry of $S$ contains the point $\mathcal O$. Therefore $S$  is invariant by reflections about every hyperplane passing through $\mathcal O$. Since every orientation preserving  isometry 
of $\mathbb M^n$ fixing $\mathcal O$ can be obtained as the composition of  reflections about hyperplanes containing $\mathcal O$, $S$ is invariant by rotations and then it is a distance sphere.         
%
%
%
\end{proof}

We describe the method of the moving planes in  $\mathbb M^n_+$ and we introduce some notation. The method consists in moving hyperplanes along a geodesic path orthogonal to a fixed direction and it is similar in $\RR^n$, $\mathbb H^n$ and $\mathbb S^n_+$. The method can be described in terms of a point $\origin$ that we fix. Since $\mathbb M^n_+$ is a homogeneous space, 
 the construction does not depend on the choice of the point we fix. In particular we choose $\origin$ to be: the origin in $\RR^n$, $e_n$
  in $\mathbb H^n$ and the north pole in $\mathbb S^n_+$. For every direction $v\in T_{\origin}\mathbb M^n_+$, 
  we consider the geodesic path $\gamma_v\colon I\to \mathbb M^n_+$  satisfying $\gamma_v(0)=\origin$ and 
  $\dot \gamma_v(0)=v$. The domain of $\gamma_v$ is $I=\RR$ in $\RR^n$ and $\mathbb H^n$ and 
  $I=(-\tfrac {\pi}{2},\tfrac {\pi}{2})$ in $\mathbb S^{n}_+$.  For any $s\in I$ we denote by $\pi_{v, s}$ 
  the hyperplane passing through $\gamma_v(s)$ and orthogonal to $\dot{\gamma}_v(s)$, and we define  
$$
S_{v,s}=\{p\in S\,\,:\,\, p\in \pi_{v,t} \mbox{ for some }t>s\}\,. 
$$
We denote by $S^{\pi}_{v,s}$ be the reflection of $S_{v,s}$ about $\pi_{v, s}$.  
Note that 
\begin{itemize}
\item $S_{v,s}=\{p\in S\,\,:\,\, p\cdot v >s\}$, if $\mathbb{M}^n_+=\RR^n$;

\vspace{0.1cm}
\item $S_{v,s}=\{p\in  S\,\,:\,\, p\cdot \dot \gamma_v(s)>0\}$, if $\mathbb{M}^n_+=\mathbb S^n_+$. 
\end{itemize} 
In the hyperbolic case, giving an explicit description of $S_{v,s}$ is more complicated, but it can be simplified by assuming $v=e_1$. This assumption is not restrictive since we can always rotate every direction $v$ in $e_1$ by using an isometry of $\mathbb H^n$. In this case we have 
\begin{itemize}
\item $S_{e_1,s}=\{p\in S\,\,:\,\, p\cdot e_1 >s\}$, if $\mathbb{M}^n_+=\mathbb H^n$.
\end{itemize}   
Let
\begin{equation*}
m_v=\inf\lbrace s\in I \, : \, S^{\pi}_{v,t}\subset\Omega\mbox{ for every $t>s$}\rbrace\,. 
\end{equation*}
The hyperplane $\pi_{v}:=\pi_{v,m_v}$ is called the {\em critical hyperplane}. By construction $S_{v,m_v}^{\pi}$ is contained in $\overline \Omega$ and $S$ and $S_{v,m_v}^{\pi}$ are tangent at some point $p_0$ which can be either interior to $S_{v,m_v}^{\pi}$, or on $\partial S_{v,m_v}^{\pi}$ 
(and in this last case $p_0\in \pi_{v}$).  

\medskip 
Now for the sake of completeness we recall the generalized version of Alexandrov's theorem that we study in this paper and its proof in $\mathbb M_+^{n}$ (see \cite{Al2,korevar,MR,Re,Ro2,Ro1}).    
\begin{theorem} The only closed $C^2$-regular connected hypersurfaces embedded in $\mathbb M^n_+$ and such that  ${\sf H}_S$ is constant are the distance spheres.
\end{theorem}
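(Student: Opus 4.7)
The plan is to apply the method of moving planes set up in this section. Fix an arbitrary direction $v\in T_{\origin}\mathbb{M}^n_+$ and slide the one-parameter family of hyperplanes $\pi_{v,s}$ along the geodesic $\gamma_v$ from the far side. By compactness of $S$ the critical value $m_v$ is well defined and finite, and the reflected cap $S^{\pi}_{v,m_v}$ lies inside $\overline{\Omega}$ and touches $S$ at some point $p_0$, with the two possibilities discussed in the text: either $p_0$ is an interior tangency point of $S^{\pi}_{v,m_v}$, or $p_0\in \pi_v$ (so tangency is along the critical hyperplane). My aim is to show that these tangency configurations force $S=S^{\pi}_{v,m_v}$ near $p_0$, and then globally, giving symmetry of $S$ about $\pi_v$.

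To exploit the tangency I would write both $S$ and the reflected piece $S^{\pi}_{v,m_v}$ locally as graphs $u$ and $\tilde u$ over the common tangent hyperplane at $p_0$, with the orientation convention chosen so that $\tilde u \le u$ near $p_0$ (since the reflected cap lies inside $\Omega$). Because $\pi_v$ is a totally geodesic hyperplane, reflection across it is an ambient isometry of $\mathbb{M}^n_+$, and hence both graphs satisfy the very same quasilinear PDE ${\sf H}_S(u)=c$ for the same constant $c$. The relevant point is that this PDE is elliptic along the segment of graphs between $\tilde u$ and $u$: for case i) the mean curvature equation is manifestly quasilinear elliptic; for case ii) ellipticity along the path follows from the assumption that both graphs have principal curvature tuple lying in the component $\Gamma$ of positivity, together with concavity of $f$ on $\Gamma$, which guarantees that the linearized operator has positive-definite symbol (Caffarelli--Nirenberg--Spruck-type structure). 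Denoting $w=u-\tilde u\ge 0$, the difference satisfies a linear uniformly elliptic equation with $w(p_0)=0$.

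With the PDE comparison in hand, the strong maximum principle applied to $w$ yields $w\equiv 0$ in a neighborhood of $p_0$ when $p_0$ is an interior tangency point. In the boundary case $p_0\in \pi_v$, the two graphs have the same gradient at $p_0$ (by the reflection construction $u$ and $\tilde u$ have $\partial_\nu u(p_0)=\partial_\nu \tilde u(p_0)$ across $\pi_v$), so the Hopf boundary point lemma applies and again forces $w\equiv 0$ locally. Once $S$ and $S^{\pi}_{v,m_v}$ coincide on an open set, a standard connectedness/unique continuation argument propagates the identity to the whole of $S^{\pi}_{v,m_v}$: the set on which the two closed embedded hypersurfaces agree is both open (by the same maximum principle argument repeated at every agreement point) and closed (by continuity), so it is all of $S^{\pi}_{v,m_v}$.

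This gives that $S$ is symmetric about $\pi_v$ for every direction $v\in T_{\origin}\mathbb{M}^n_+$. Since the method produced, for each geodesic $\gamma_v$, a hyperplane orthogonal to $\gamma_v$ that is a mirror of $S$, Proposition \ref{aboutO} applies and concludes that $S$ is a distance sphere centered at the center of mass $\mathcal{O}$. The main delicate point in this program is the ellipticity for class ii): one must check that along the family of graphs between $\tilde u$ and $u$ the curvature tuple stays in $\Gamma$, so that the concavity of $f$ gives a uniformly elliptic linearized operator and both the strong maximum principle and the Hopf lemma are at one's disposal. Everything else is the bookkeeping of the moving planes method in the three space forms, which has already been unified in the preceding notation.
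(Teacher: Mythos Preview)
Your proposal is correct and follows essentially the same route as the paper: fix a direction, pass to the critical hyperplane, write $S$ and the reflected cap as graphs over the common tangent space at the tangency point, linearize the curvature operator to obtain an elliptic equation for the difference, apply the strong maximum principle (interior case) or Hopf's lemma (boundary case) to show the coincidence set is open, then conclude by connectedness and invoke Proposition~\ref{aboutO}. The paper defers the ellipticity in case~ii) to \cite{korevar} rather than arguing via concavity on $\Gamma$ as you do, but the substance is the same.
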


\begin{proof}
The proof consists in showing that for every unitary vector $v\in T_{\origin}\mathbb M^n_+$, $S$ is symmetric about $\pi_{v}$. This is obtained by showing that  $S\cap S^{\pi}_{v,m_{v}}$ is open and closed in $ S^{\pi}_{v,m_{v}}$. Note that $S\cap S^{\pi}_{v,m_{v}}$ is  not empty since $S^{\pi}_{v,m_{v}}$ is tangent to $S$ at some point and $S\cap S^{\pi}_{v,m_{v}}$ is closed in $ S^{\pi}_{v,m_{v}}$. The only nontrivial step is that $S\cap S^{\pi}_{v,m_{v}}$ is open, which is obtained by using maximum principles for solutions to elliptic equations.  

Let $p_0\in S\cap S^{\pi}_{v,m_{v}}$. By construction we have that
$$
T_{p_0}S=T_{p_0}S^{\pi} \,,
$$
where $S^\pi$ is the reflection of $S$ about $\pi_v$. From the implicit function theorem, $S$ and $S^\pi$ are locally the Euclidean graph of $C^2$-regular functions $u$ and $\hat u$, respectively, defined in a ball $B_r$ of radius $r$ centered at the origin $O$ in $T_{p_0}S$. The functions $u$ and $\hat u$ satisfy the elliptic equation ${L} u (x) = {\sf H}_S(x,u(x))$ for $x \in B_r$; here the operator ${L}$ is the mean curvature operator or, more generally, a fully nonlinear operator. The ellipticity of ${L}$ is standard in the case of the mean curvature operator and it follows from \cite{korevar} for the other cases considered.  The proof is standard and hence it is omitted. 

Since ${\sf H}_S$ is constant, the difference $u-\hat u$ satisfies an elliptic equation of the form $\mathcal{L}(u-\hat u)=0$ with $u(O)-\hat u(O)= 0$.

If $p_0$ is interior to $S^{\pi}_{v,m_{v}}$ then we can choose $r$ sufficiently small such that $u-\hat u \geq 0$ in $B_r$ and by the strong maximum principle we obtain that $u-\hat u \equiv 0$ in $B_r$. 

If $p_0$ is on the boundary of $S^{\pi}_{v,m_{v}}$, then by construction $u-\hat u \geq 0$ in a half ball $B_r^+$, $u(O)-\hat u(O)= 0$ and $\nabla u(O)=\nabla u(O)=0$. By applying Hopf's boundary point lemma at the point $O$ we obtain that  $u-\hat u \equiv 0$ in $B_r^+$.

Hence, we have proved that $S\cap S^{\pi}_{v,m_{v}}$ is open, and the conclusion follows.
\end{proof}

\section{Local quantitive estimates in $\mathbb M^{n}$} \label{section3}
In this section we prove some preliminary estimates which we will use in the proof of the main theorem. We have the following preliminary lemma about the local equivalences of distances.  
\begin{lemma}\label{distance}
$\bullet$ Let $d$ be the distance induced by the hyperbolic metric \eqref{hyp metric} in $\mathbb H^n$ and let $q$ be such that $d(q,e_n)<R$; then 
\begin{equation} \label{oggilunedi}
c |q-e_n| \leq d (q,e_n) \leq C |q-e_n| \,,
\end{equation}
for some  positive constants $c$ and $C$ depending only on $R$.

\medskip 
$\bullet$ Let $d$ be the distance induced by the round metric \eqref{round metric} in $\RR^n$ and let $p,q$ in $\RR^n$ be such that $|p|,|q|\leq R\,.$ Then 
\begin{equation} \label{mid}
\frac{2}{1+R^2}|p-q| \leq d(p,q) \leq \pi |p-q| \,.
\end{equation}
\end{lemma}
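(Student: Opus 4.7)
The plan is to handle the two bullets in a unified way: both metrics are pointwise conformal to the Euclidean one, so the main task is to pinch the conformal factor between two positive constants on a set that contains an almost-minimizing curve joining the two points, and then to compare $g$-length with Euclidean length on that set.

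For the hyperbolic statement, I would first show that $d(q,e_n)<R$ forces $q_n$ (and indeed $\gamma_n(t)$ for every almost-minimizing competitor $\gamma$) to lie in a compact subinterval $[c_0,C_0]\subset (0,\infty)$ depending only on $R$. This is a one-dimensional projection: from $|\dot\gamma|\ge |\dot\gamma_n|$ and \eqref{hyp metric} one gets that the $g$-length element dominates $|\dot\gamma_n|/\sqrt{\gamma_n}$, and integrating yields an a priori estimate on the excursion of $\gamma_n$ in terms of $R$. Once $\gamma_n$ is trapped in $[c_0,C_0]$, the conformal factor $1/\gamma_n$ is pinched between $1/C_0$ and $1/c_0$, so the $g$-length of any such competitor is comparable (up to constants depending only on $R$) to its Euclidean length. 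The lower bound in \eqref{oggilunedi} follows by infimizing over competitors, while the upper bound follows by testing against the Euclidean segment from $e_n$ to $q$, whose last coordinate stays in $[c_0,C_0]$ by construction.

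For the spherical statement no trapping is needed since $|p|,|q|\le R$ is already a Euclidean constraint. I would use that the metric \eqref{round metric} is the pullback of the round metric on $\mathbb S^n\subset\RR^{n+1}$ by inverse stereographic projection, so that if $P,Q\in\mathbb S^n$ denote the sphere-points associated to $p,q\in\RR^n$ one has the classical identities
\[
|P-Q|=\frac{2|p-q|}{\sqrt{(1+|p|^2)(1+|q|^2)}},\qquad d(p,q)=2\arcsin\!\bigl(\tfrac{1}{2}|P-Q|\bigr).
\]
The two halves of \eqref{mid} follow at once from the elementary inequalities $y\le \arcsin y\le \tfrac{\pi}{2}y$ on $[0,1]$, together with the sandwich $1\le \sqrt{(1+|p|^2)(1+|q|^2)}\le 1+R^2$; the factor $\pi$ in the upper bound arises as the slope of the convex chord of $\arcsin$ on $[0,1]$.

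The only delicate step is the one-dimensional projection argument that traps $\gamma_n$ in a compact subinterval of $(0,\infty)$ in the hyperbolic case; every other step is a routine Euclidean-vs-conformal comparison.
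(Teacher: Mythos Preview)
The paper actually states this lemma without proof, treating it as an elementary preliminary fact; there is nothing to compare your argument against on the paper's side.

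Your proposal is correct. In the hyperbolic bullet, the one-dimensional projection argument does exactly what is needed: from $|\dot\gamma|\ge|\dot\gamma_n|$ one gets a lower bound on the $g$-length in terms of $\int |\dot\gamma_n|/\sqrt{\gamma_n}\,dt$ (or $\int |\dot\gamma_n|/\gamma_n\,dt$, depending on which conformal factor one reads off from \eqref{hyp metric}), and this integral controls the oscillation of $\log\gamma_n$ or of $\sqrt{\gamma_n}$, trapping $\gamma_n$ in a compact interval depending only on $R$. Your observation that the Euclidean segment from $e_n$ to $q$ also has its last coordinate in that interval (since it lies between $1$ and $q_n$, both already controlled) is the right way to get the upper bound. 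In the spherical bullet, the stereographic identities you quote are standard and the arithmetic checks: $2\arcsin(y)\ge 2y$ gives the lower bound with constant $2/(1+R^2)$, and $2\arcsin(y)\le \pi y$ together with $|P-Q|\le 2|p-q|$ gives the upper bound with constant $\pi$, matching \eqref{mid} exactly.

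One cosmetic remark: what you describe is a full proof, but it is phrased as a plan (``I would first show\ldots'', ``I would use\ldots''). If this is to be inserted as an actual proof, rewrite it in declarative mode and carry out the short computation for the trapping step explicitly.
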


Let us consider now a $C^2$-regular, connected, closed hypersurface $S=\partial \Omega$ embedded in $\mathbb M^n_+$, where 
$\Omega$ is a relatively compact domain and denote by $N$ the inward normal vector filed (inward with 
respect to $\Omega$). For $p\in S$ we denote by $\varphi_{p}\colon \mathbb 
 M^{n}_+\to \RR^n$  the following function whose definition depends on the geometry of $\mathbb M^n$: 
 \begin{itemize}
 \item if $\mathbb M^n$ is $\RR^n$, $\varphi_{p}\in {\rm SO}(n)\rtimes \RR^n$ and it is such that $\varphi_p(p)=0$ and $\varphi_{p*|p}\left(T_pS\right)=\{x_n=0\}$;

\vspace{0.1cm}
 \item if $\mathbb M^n$ is $\mathbb H^n$, $\varphi_p$ is an orientation preserving isometry of $\mathbb H^n$ such that  $\varphi_{p}(p)=e_n$, $\varphi_{p*|p}\left(T_pS\right)=\{x_{n}=0\}$;

\vspace{0.1cm}
\item if $\mathbb M^n$ is $\mathbb S^n$, $\varphi_{p}$ is the stereographic projection form the antipodal point to $p$ restricted to $\mathbb S^n_+$ composed with a rotation of $\mathbb R^n$ in order to have $\varphi_{p*|p}\left(T_pS\right)=\{x_{n}=0\}$.   
\end{itemize}
Note that in all the three cases we have that $\varphi_p(S)$ is a hypersurface embedded in $\RR^n$ and 
$$
\varphi_{p*|p}\left(T_pS\right)=\{x_{n}=0\}\,.
$$
For $r>0$, we denote by $\mathcal U_r(p)$ the open neighborhood of $p$ in $S$ such that $\varphi_p\left(\mathcal
U_r(p)\right)$ is the (Euclidean) graph of a $C^2$-function $u\colon B_r\to \RR$ defined in the ball of radius $r$ of $\RR^{n-1}$ 
centered at the origin. Even if we don't have a canonical choice of $\varphi_p$, the subsets $\mathcal{U}_r(p)$ do not depend on the choice of $\varphi_p$.  Moreover, the implicit function theorem implies that  any $p\in S$ has a neighborhood $\mathcal U_r(p)$ for $r$
sufficiently small. In order to establish the quantitive estimates we need in the proof of the main theorem, we have to show 
that $r$ can be uniformly bounded from below with a bound depending only on $\rho$. The following lemma also introduces the 
quantity $\rho_1$ which will be largely used in the paper.  

\begin{lemma}\label{Lemma1}
Let $S$ be a $C^2$-regular closed hypersurface embedded in $\mathbb M_+^n$ and satisfying  a touching ball condition of radius $\rho$ and let $\rho_1$ be defined in the following way:  
\begin{itemize}
\item $\rho_1=\rho$, if $\mathbb M^n=\RR^n$;

\vspace{0.1cm}
\item $\rho_1=(1-{\rm e}^{-\rho}\sinh\rho){\rm e}^{-\rho}\sinh\rho$,  if $\mathbb M^n=\mathbb H^n$;

\vspace{0.1cm}
\item $\rho_1=\tfrac{\rho}{\pi}$, if $\mathbb M^n=\mathbb S^n$\,.
\end{itemize}  
Then 
\begin{enumerate}
\item[{\rm(i)}] any point $p\in S$ admits a neighborhood $\mathcal U_{\rho_1}(p)$ and $\varphi_{p}(\mathcal U_{\rho_1}(p))$ is the graph of a $C^2$-function $u\colon B_{\rho_1}\to \RR$ satisfying 
\begin{equation}\label{5}
|u(x)-u(O)|\leq\rho_1-\sqrt{\rho_1^2-|x|^2}, \quad|\nabla u(x)|\leq\frac{|x|}{\sqrt{\rho_1^2-|x|^2}}\,;
\end{equation}

\vspace{0.1cm}
\item[{\rm(ii)}] there exists a universal constant $C$ such that for any $0<\alpha<\frac{1}{2}\min(1,\rho^{-1}_{1})$ and $q$ in $\mathcal U_{\alpha \rho_1}(p)$ we have 
\begin{equation}\label{d_S}
d_S(p,q)\leq\alpha C\rho_1\,,
\end{equation}
where $d_S$ is the geodesic distance on $S$.
\end{enumerate}
\end{lemma}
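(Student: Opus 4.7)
The plan is to reduce each case to a uniform Euclidean touching-ball argument for $\varphi_p(S)$ near the base point ($0$ in $\RR^n$, $e_n$ in $\mathbb H^n$, the origin in $\mathbb S^n$), after computing how geodesic balls transform under $\varphi_p$.

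\textbf{Step 1.} In each model $\varphi_p$ is a conformal diffeomorphism onto a subset of $\RR^n$, and is an isometry in $\RR^n$ and $\mathbb H^n$. I would first write down the Euclidean shape of the two touching geodesic balls of radius $\rho$ after applying $\varphi_p$: in $\RR^n$ nothing happens; in the half-space model of $\mathbb H^n$ a hyperbolic ball of radius $\rho$ whose hyperbolic center sits at height $a$ is the Euclidean ball with Euclidean center at height $a\cosh\rho$ and Euclidean radius $a\sinh\rho$, so tangency to the horizontal plane through $e_n$ forces the two Euclidean tangent radii at $e_n$ to be $e^{\rho}\sinh\rho$ and $e^{-\rho}\sinh\rho$; in the stereographic chart of $\mathbb S^n$ the relation $d_g(0,x)=2\arctan|x|$ gives the analogous Euclidean tangent radii in terms of $\tan(\rho/2)$.

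\textbf{Step 2.} In every case $\varphi_p(S)$ is locally trapped between two Euclidean spheres tangent to the Euclidean tangent hyperplane at the base point. Let $R_0$ denote the smaller of the two Euclidean tangent radii; elementary algebra verifies $\rho_1\leq R_0$ for the $\rho_1$ prescribed in the statement (in the hyperbolic case, $(1-r)r\leq r$ with $r=e^{-\rho}\sinh\rho\in(0,\tfrac12)$). The classical one-sided barrier argument then produces a $C^2$ graphing function $u\colon B_{\rho_1}\to\RR$, and direct comparison with the upper and lower hemispherical graphs of two spheres of radius $\rho_1$ tangent to the horizontal hyperplane at the base point yields both the pointwise bound $|u(x)-u(O)|\leq \rho_1-\sqrt{\rho_1^2-|x|^2}$ and the gradient bound $|\nabla u(x)|\leq |x|/\sqrt{\rho_1^2-|x|^2}$, the latter coming from the fact that the tangent plane of the graph at $(x,u(x))$ cannot be steeper than the tangent plane of the extremal sphere at the same horizontal position. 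The specific hyperbolic choice $\rho_1=(1-e^{-\rho}\sinh\rho)e^{-\rho}\sinh\rho$ has the additional virtue of keeping $u$ uniformly bounded away from $x_n=0$ on $B_{\rho_1}$, so that the hyperbolic conformal factor $x_n^{-2}$ is controlled on $\varphi_p(\mathcal U_{\rho_1}(p))$.

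\textbf{Step 3.} For (ii), given $q\in\mathcal U_{\alpha\rho_1}(p)$ with parameter $x'\in B_{\alpha\rho_1}$, I would lift the straight segment from $O$ to $x'$ to the graph as $\gamma(t)=(tx',u(tx'))$ and estimate its Euclidean arclength:
\[
L_E(\gamma)=\int_0^1 \sqrt{|x'|^2+|\nabla u(tx')\cdot x'|^2}\,dt \;\leq\; |x'|\int_0^1 \frac{\rho_1\,dt}{\sqrt{\rho_1^2-t^2|x'|^2}} \;=\; \rho_1\arcsin(|x'|/\rho_1),
\]
which for $\alpha\leq\tfrac12$ is at most $C\alpha\rho_1$ with $C$ universal. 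Transferring $L_E(\gamma)$ to the intrinsic distance on $S$ through $\varphi_p^{-1}$ costs only a universal factor, by the conformal-factor bound from Step 2 together with the local equivalence of distances in Lemma~\ref{distance}. Since $d_S(p,q)$ is bounded above by the arclength of any connecting curve on $S$, the estimate \eqref{d_S} follows.

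\textbf{Main obstacle.} The technical hurdle is the case-by-case verification in Steps 1--2 that the specific $\rho_1$ of the lemma, in particular the nonstandard expression $(1-e^{-\rho}\sinh\rho)e^{-\rho}\sinh\rho$, is both $\leq R_0$ and a safe graph radius giving the two bounds uniformly across the three models; once this is checked, Step 3 is essentially the same computation in all three models thanks to Lemma~\ref{distance}.
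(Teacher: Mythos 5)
Your overall strategy is the paper's: transport everything through $\varphi_p$, check that $\varphi_p(S)$ enjoys a \emph{Euclidean} touching ball condition of radius $\rho_1$ near the base point so that the Euclidean case (\cite[Lemma 2.1]{JEMS}) applies, and for (ii) lift the segment $t\mapsto tx'$ to the graph and integrate the bounded conformal factor --- your Step 3 is essentially verbatim the paper's computation. The genuine problem is in Step 2, where your justification of the gradient bound in \eqref{5} does not work as stated. Being trapped between the two spheres of radius $\rho_1$ tangent at the base point controls $|u(x)-u(O)|$, but it gives \emph{no} pointwise control on $\nabla u$: a $C^2$ function squeezed between two quadratically tangent barriers can have a gradient that does not even vanish as $x\to O$ (e.g.\ a smoothing of $\tfrac{|x|^2}{2\rho_1}\sin(1/|x|)$ stays between the barriers while $|\nabla u|$ oscillates up to order $1$). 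So the tangent plane of the graph at $(x,u(x))$ can be much steeper than that of the extremal sphere at the same horizontal position. The correct mechanism, which is the one behind \cite[Lemma 2.1]{JEMS}, is that the two touching balls of radius $\rho_1$ at the point $(x,u(x))$ \emph{itself} cannot contain the base point $O\in\varphi_p(S)$; this constrains the angle between the normal at $(x,u(x))$ and $e_n$ and yields exactly $|\nabla u(x)|\le |x|/\sqrt{\rho_1^2-|x|^2}$.

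This changes what must be verified in Steps 1--2: it is not enough that $\rho_1\le R_0$ with $R_0$ the smaller Euclidean tangent radius at the base point; you need the Euclidean touching ball condition of radius $\rho_1$ at \emph{every} point of the neighborhood (this is also what guarantees the graph property over all of $B_{\rho_1}$). That uniformity is precisely what the hyperbolic formula encodes: the image of a hyperbolic touching ball at a point of Euclidean height $h$ is a Euclidean ball of radius at least $h\,{\rm e}^{-\rho}\sinh\rho$, and on the neighborhood in question the height stays above $1-{\rm e}^{-\rho}\sinh\rho$, whence the uniform radius $(1-{\rm e}^{-\rho}\sinh\rho)\,{\rm e}^{-\rho}\sinh\rho=\rho_1$; a similar uniform check is needed in the stereographic chart. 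In other words, the factor $(1-{\rm e}^{-\rho}\sinh\rho)$ is not primarily there to control the conformal factor, as you suggest, but to make the touching ball radius uniform over $\mathcal U_{\rho_1}(p)$. Once this is repaired, your argument coincides with the paper's.
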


\begin{proof}
We refer to  \cite[Lemma 2.1]{JEMS} for the Euclidean case and we show how to deduce the statement form the 
Euclidean case (the hyperbolic case has been already done in \cite{INDIANA}). 

\begin{enumerate}
\item[(i)] It is enough to observe that $\varphi_p(S)$ satisfies an Euclidean touching ball condition of radius $\rho_1$ 
(this motivates the choice of $\rho_1$).  This can be easily deduced by using lemma \ref{distance}.  Hence the first item of the statement follows from the Euclidean case.

\vspace{0.1cm}
\item[(ii)]
Let $q\in \mathcal U_{\rho_1}(p)$. Then  $\varphi_p(q)=(x,v(x))$ for some $|x|<\rho_1$. Let $\gamma:[0,1]\rightarrow \varphi_p(S)$ be the curve joining $\varphi_p(p)$ to $\varphi_p(q)$ defined as $\gamma(t)=(tx,v(tx))$. Then
\begin{equation*}
\dot{\gamma}(t)=(x,\nabla v(tx)\cdot x)
\end{equation*}
and the Cauchy-Schwarz inequality implies
\begin{equation*}
|\dot{\gamma}(t)|\leq|x|\sqrt{1+|\nabla v(tx)|^2}\,.
\end{equation*}
Then \eqref{5} yields 
\begin{equation*}\label{gamma'}
|\dot{\gamma}(t)|\leq \dfrac{\rho_1|x|}{\sqrt{\rho_1^2-t^2|x|^2}}\leq\dfrac{|x|}{\sqrt{1-\alpha^2}}\leq\dfrac{2}{\sqrt{3}}|x|,
\end{equation*}
for $0\leq|x|\leq\alpha\rho_1$. Since
\begin{equation*}
d_S(p,q)\leq l(\gamma)
\end{equation*}
and 
\begin{eqnarray*}
&&l(\gamma)=\int_0^1\dfrac{|\dot \gamma|}{v(tx)}\, dt\,,\mbox{ in the hyperbolic case},\\
&&l(\gamma)=\int_0^1\dfrac{2}{1+|\gamma|^2}|\dot\gamma|\, dt\,,\mbox{ in the spherical case},
\end{eqnarray*}
we obtain that
\begin{equation*}
d_S(p,q)\leq C|x|
\end{equation*}
for a universal constant $C$ and \eqref{d_S} follows.
\end{enumerate}
\end{proof}

From lemma \ref{Lemma1} it follows the following 
\begin{corollary}\label{corollary_trascendental}
Let $S$ be a compact $C^2$-regular embedded hypersurfaces in $\mathbb M^{n}_+$ satisfying a touching ball condition of radius $\rho$. 
Let $q \in \mathcal{U}_{\alpha\rho_1}(p)$, with $0<\alpha<\tfrac{1}{2}\min(1,\rho^{-1}_{1})$. Then 
$$
d_S(p,q) \leq C d(p,q) \,,
$$
where $C$ depends only on $\rho$. In particular for every $p\in S$ the geodesic ball $\mathcal B_{r}(p)$ centered at $p$ and with radius $r<\tfrac{1}{2}\min(1,\rho^{-1}_{1})$ 
satisfies  
\begin{equation}\label{area}
{\rm Area}(\mathcal B_{r}(p))\geq cr^{n-1}\,,
\end{equation}
and $c$ is a constant depending only on $n$. 
\end{corollary}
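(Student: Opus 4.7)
The plan is to chain together the intrinsic bound of Lemma \ref{Lemma1}(ii), which controls $d_S(p,q)$ in terms of the horizontal Euclidean coordinate in the chart $\varphi_p$, with the Euclidean-to-Riemannian distance comparison from Lemma \ref{distance}. Fix $q\in\mathcal{U}_{\alpha\rho_1}(p)$ and write $\varphi_p(q)=(x,v(x))$ with $|x|<\alpha\rho_1$, where $v$ is the graphing function furnished by Lemma \ref{Lemma1}(i); then Lemma \ref{Lemma1}(ii) already yields $d_S(p,q)\leq C|x|$.

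Next I would bound $|x|$ by $d(p,q)$. Since $\varphi_p(p)$ has vanishing horizontal components, $|x|\leq|\varphi_p(q)-\varphi_p(p)|$. In the Euclidean case $\varphi_p$ is an isometry, so this quantity is exactly $d(p,q)$. In the hyperbolic case $\varphi_p$ is an isometry of $\mathbb{H}^n$ sending $p$ to $e_n$, so $d(p,q)=d_{\mathbb{H}^n}(e_n,\varphi_p(q))$, and the chord comparison \eqref{oggilunedi} gives $|\varphi_p(q)-e_n|\leq C'd(p,q)$ (applicable because $q\in\mathcal{U}_{\alpha\rho_1}(p)$ keeps $\varphi_p(q)$ inside a fixed-size neighborhood of $e_n$). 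In the spherical case $\varphi_p$ is a stereographic projection composed with a rotation, and \eqref{mid} again gives $|\varphi_p(q)-\varphi_p(p)|\leq C' d(p,q)$. Combining these yields $d_S(p,q)\leq C''\,d(p,q)$ with $C''$ depending only on $\rho$.

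For the area lower bound, fix $r<\tfrac{1}{2}\min(1,\rho_1^{-1})$ and pick $\beta=\beta(r,\rho)<\tfrac{1}{2}\min(1,\rho_1^{-1})$ small enough that Lemma \ref{Lemma1}(ii) forces $\mathcal{U}_{\beta\rho_1}(p)\subset\mathcal{B}_r(p)$, the intrinsic geodesic ball of radius $r$ in $S$. The neighborhood $\mathcal{U}_{\beta\rho_1}(p)$ is the preimage under $\varphi_p$ of a Euclidean graph over $B^{n-1}_{\beta\rho_1}$; its Euclidean $(n-1)$-area equals $\int_{B^{n-1}_{\beta\rho_1}}\sqrt{1+|\nabla v|^2}\,dx$, which is at least $|B^{n-1}_{\beta\rho_1}|=c_n(\beta\rho_1)^{n-1}$. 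Passing back to the Riemannian area on $S$ costs only the bounded conformal factor of $\varphi_p$ on a fixed-size neighborhood, producing the desired inequality $\mathrm{Area}(\mathcal{B}_r(p))\geq c\,r^{n-1}$.

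The only real obstacle is keeping the distortion of $\varphi_p$ and the conformal factor of the ambient metric controlled uniformly in the three geometries; this is, however, precisely what Lemma \ref{distance} provides on any neighborhood of bounded radius, so the argument reduces to careful tracking of constants.
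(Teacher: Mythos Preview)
Your argument is correct and is the natural derivation the paper intends (the paper states this corollary without proof, as a direct consequence of Lemma~\ref{Lemma1}). The chaining of the bound $d_S(p,q)\leq C|x|$ established inside the proof of Lemma~\ref{Lemma1}(ii) with the chord-versus-geodesic comparison of Lemma~\ref{distance} is exactly the right mechanism, and the area lower bound then follows by comparing $\mathcal{B}_r(p)$ with the graph patch $\mathcal{U}_{\beta\rho_1}(p)$ as you describe.
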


\subsection{Quantitive stability of the parallel transport }
We study quantitative estimates involving the parallel transport. We adopt the following notation (see also \cite{INDIANA}):\\
given $p,q\in \mathbb M^n_+$ with $p\neq q$, we denote by $\tau_p^q\colon T_p\mathbb M^n\to T_q\mathbb M^n$ the parallel transport along the unique geodesic path $
\sigma$ connecting $p$ to $q$. We further assume that $\tau_p^p$ is the identity of $T_p\mathbb M^n$.

\begin{remark}
{\em If $\mathbb M^n$ is $\RR^n$, then $\tau_p^q$ is the identity for every $p,q$. \\
In the case $\mathbb M^n=\mathbb H^n$, if $p$ and $q$ belongs to the same vertical line, then we have $\tau_{p}^q(v)=\tfrac{p_n}{q_n} v\,.$\\
In the case $\mathbb S^n_+$, we have $\tau_p^q=(P_q\circ R_{\alpha})_{|T_p\mathbb S^n}$, where $R_\alpha$ acts as the rotation of the angle $\alpha=d(p,q)$ in
the plane $\pi$ containing $\sigma$ and as the identity in the orthogonal complement of $\pi$ and $P_q$ is the orthogonal projection onto $T_q\mathbb S^n$.}
\end{remark}

Here we introduce the following notation: 
\begin{itemize}
\item given $p\in \mathbb M^n$ and $v\in T_p\mathbb M^n$, $|v|_p:=g_{p}(v,v)^{1/2}$;

\vspace{0.2cm}
\item if $S=\partial \Omega$ is a compact $C^2$-regular embedded hypersurface in  $\mathbb M^{n}_+$, where $\Omega$ is a relatively compact domain in $\mathbb M^{n}_+$, 
$N$ is the inward unitary normal vector field on $S$.     
\end{itemize}

The first result we prove is the following 
%
\begin{proposition}\label{fejahyp2}
Let $S$ be a compact $C^2$-regular embedded hypersurface in  $\mathbb M^{n}_+$ satisfying a touching ball condition of radius $\rho$. There exists $\delta_0=\delta_0(\rho)$ such that if $p,q\in S$ with $d_S(p,q)\leq \delta_0$ then
\begin{equation} \label{bound on nu N+1}
g_p(N_p, \tau_q^p(N_q))\geq  \sqrt{1-C^2d_S(p,q)^2}\ \ \quad  \textmd{ and } \quad \ \ |N_p - \tau_q^p(N_q)|_p \leq C d_S(p,q)\,,
\end{equation}
where $C$ is a constant depending only on $\rho$.
\end{proposition}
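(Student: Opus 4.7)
\emph{Proof sketch.} The strategy is to reduce the statement to two ingredients: a uniform bound on the shape operator of $S$, and a small-loop holonomy estimate in the ambient space. First, since $S$ satisfies a touching ball condition of radius $\rho$, standard comparison with the two tangent geodesic spheres of radius $\rho$ at each point $p\in S$ yields a uniform bound
\[
\|A_p\|\leq C_0(\rho), \qquad p\in S,
\]
on the Weingarten operator $A = -\nabla^{\mathbb{M}^n} N$; this is because the principal curvatures of a geodesic sphere of radius $\rho$ in $\mathbb{M}^n$ are all equal to an explicit constant ($1/\rho$ in $\RR^n$, $\coth\rho$ in $\mathbb{H}^n$, $\cot\rho$ in $\mathbb{S}^n_+$), which gives two-sided control of the eigenvalues of $A_p$.

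Next, fix $\delta_0 = \delta_0(\rho)$ small enough that, whenever $d_S(p,q)\leq \delta_0$, the points $p$ and $q$ can be joined by a unit-speed minimizing geodesic $c\colon[0,L]\to S$ (intrinsic to $S$, $L = d_S(p,q)$) lying in a geodesically convex neighbourhood of $p$ in $\mathbb{M}^n$. Let $P_t\colon T_{c(t)}\mathbb{M}^n\to T_p\mathbb{M}^n$ denote the ambient parallel transport along $c$, and set $\nu(t) = P_t(N_{c(t)})$. Since $P_t$ is an isometry commuting with the Levi--Civita covariant derivative,
\[
\dot\nu(t) \,=\, P_t\!\left(\tfrac{D}{dt}\,N_{c(t)}\right) \,=\, -\,P_t\bigl(A_{c(t)}(\dot c(t))\bigr),
\]
so $|\dot\nu(t)|_p\leq C_0$ and, integrating from $0$ to $L$,
\[
|N_p - P_L(N_q)|_p \,\leq\, C_0\, d_S(p,q).
\]

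Finally, $P_L$ is parallel transport along the curve $c\subset S$, whereas the statement involves $\tau_q^p$, the parallel transport along the minimizing geodesic $\sigma$ of $\mathbb{M}^n$ joining $p$ and $q$. These two parallel transports differ by the holonomy around the loop $c\cup\sigma^{-1}$, which has length at most $2L$ and can be spanned by a smooth $2$-disc of area $O(L^2)$ inside a common geodesically convex ball; since the sectional curvature of $\mathbb{M}^n$ is constant (hence bounded), the standard small-loop holonomy estimate gives
\[
|P_L(N_q) - \tau_q^p(N_q)|_p \,\leq\, C_1 L^2 \,\leq\, C_1\delta_0\, d_S(p,q).
\]
Combining these two bounds proves the second inequality in the statement, with $C = C_0 + C_1\delta_0$. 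The first inequality then follows from the polarization identity
\[
|N_p - \tau_q^p(N_q)|_p^{\,2} \,=\, 2 - 2\, g_p\bigl(N_p,\tau_q^p(N_q)\bigr)
\]
(both vectors are unit, and $\tau_q^p$ is an isometry) together with the elementary bound $1 - x/2 \geq \sqrt{1 - x}$ for $x\in[0,1]$, after shrinking $\delta_0$ if needed to ensure $C\,d_S(p,q)\leq 1$.

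The main technical obstacle is precisely the holonomy step: in the Euclidean setting it is vacuous because parallel transport is path-independent, but in $\mathbb{H}^n$ and $\mathbb{S}^n_+$ one must quantify the path-dependence of $\tau$, either via the explicit formulas for parallel transport in the half-space and stereographic models recalled in the Remark preceding the proposition, or via the general small-loop holonomy bound driven by the constant ambient curvature.
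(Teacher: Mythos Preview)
Your argument is correct and takes a genuinely different route from the paper. The paper does not give a unified proof: it refers to \cite{JEMS} for the Euclidean case and to \cite{INDIANA} for the hyperbolic case, and then treats only $\mathbb{S}^n_+$ explicitly. In the spherical case it passes to the stereographic model, places $p$ at the origin, observes that $\varphi_p(S)$ satisfies an \emph{Euclidean} touching ball condition of radius $\rho_1=\rho/\pi$, and invokes the already-proved Euclidean estimate $\nu_p\cdot\nu_q\ge\sqrt{1-|p-q|^2/\rho_1^2}$ from \cite[Lemma~2.1]{JEMS}. The remaining work is to translate $\nu$ back to $N$ and the Euclidean identity back to $\tau_q^p$ via explicit formulas in the model (e.g.\ $\tau_q^O(N_q)=\tfrac12\nu_q$ when $q$ lies on a line through the origin).

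Your proof is intrinsic and handles all three geometries at once: you bound the shape operator directly from the touching spheres, integrate $\tfrac{D}{dt}N_{c(t)}=-A(\dot c)$ along an $S$-curve to control $|N_p-P_L(N_q)|_p$, and then absorb the difference between $P_L$ (along $c$) and $\tau_q^p$ (along the ambient geodesic) by a small-loop holonomy estimate of size $O(L^2)$. This avoids the model-by-model bookkeeping and would extend verbatim to any ambient manifold with bounded sectional curvature. The paper's approach, by contrast, buys explicit constants tied to $\rho_1$ and is consistent with the rest of the manuscript, where the maps $\varphi_p$ and the conformal models are used repeatedly (Lemmas~\ref{lem_p_pstar1}, \ref{lem_p_pstar2}, Section~\ref{subsect Luigi}). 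One small remark: you do not actually need $c$ to be a minimizing $S$-geodesic---any unit-speed curve in $S$ of length close to $d_S(p,q)$ suffices for both the integration step and the holonomy step, so no lower bound on the injectivity radius of $S$ is required.
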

\begin{proof}
We have already proved the assertion in the Euclidean and in the hyperbolic space in \cite{JEMS} and \cite{INDIANA}, respectively, and here
we focus on the spherical case. 
It is convenient to regard $S$ as a hypersurface of $\RR^n$ equipped with the spherical metric  \eqref{round metric}. We may further assume that $p$ is the origin $O$ of $\RR^n$ and $q$ belongs to a straight line passing through $O$.

Let $\delta_0=\min(\rho_1,\tfrac{1}{C})$, where $C$ will be specified later. If $d(p,q)\leq \rho_1=\tfrac{\rho}{\pi}$, then 
 we can apply the Euclidean estimates in \cite[Lemma 2.1]{JEMS} and obtain 
$$
\nu_{p} \cdot \nu_{q} \geq \sqrt{1-\frac{|p-q|^2}{\rho_1^2}} \,,
$$
where $\nu$ denotes the Euclidean inward normal vector field on $S$.  
Since 
$$
d(p,q)\leq \pi |p-q|\,,
$$
we have 
\begin{equation}\label{intermediate}
\nu_p \cdot \nu_q \geq \sqrt{1-C^2d_S(p,q)^2} \,,
\end{equation}
where $C$ depends only $\rho$. Moreover the inward $g$-unitary normal vector field $N$ to $S$ satisfies 
$$
N_p=\dfrac{1}{2}\nu_p\,,\quad \frac{2}{1+|q|^2}N_q=\nu_q=2\tau_q^p(N_q) \,,
$$
and \eqref{intermediate} implies
$$
g_p(N_p, \tau_q^p(N_q))\geq \dfrac{1}{2} \sqrt{1-C^2d_S(p,q)^2} \,,
$$
which is the first inequality in \eqref{bound on nu N+1}. The second inequality in \eqref{bound on nu N+1} follows by a direct computation 
and the claim follows. 
\end{proof}
 
We recall the following lemma proved in \cite{INDIANA}

\begin{lemma} \label{lem_p_pstar1}
Let $\Sigma$ and $\hat \Sigma$  be two compact embedded hypersurfaces in $\mathbb H^n$ satisfying both a touching ball condition of radius $\rho$. 
Assume that $e_n\in \Sigma$ and $T_{e_n}\Sigma=\{x_n=0\}$ and that there exist two local parametrizations $u,\hat u: B_{r} \to \RR$ of \,$\Sigma$ and
 $\hat \Sigma$, respectively, with $0<r\leq \rho_1$ and such that $u - \hat u  \geq 0$.
Let $p_1=(x_1,u(x_1))$ and $\hat p_1^*=(x_1,\hat u(x_1))$, with $x_1\in \partial B_{r/4}$, and denote by $\gamma$ the geodesic path starting from $p_1$ and tangent to 
$-\nu_{p_1}$ at $p_1$. Assume that 
\begin{equation} \label{conan}
d(p_1,\hat p_1^*) + |\nu_{p_1} - \nu_{\hat p_1^*} | \leq \theta \,. 
\end{equation}
for some $\theta \in [0, 1/2]$, where $\nu$ is the Euclidean unitary normal vector field to $\Sigma$. 
There exists $\bar r$ depending only on $\rho$ such that if $r \leq \bar r$ we have that $\gamma \cap \hat \Sigma \neq \emptyset$ and, if we denote by $\hat p_1$ the first intersection point between $\gamma$ and $\hat \Sigma$, then
\begin{equation*}
d(p_1,\hat p_1) + |N_{p_1} - \tau_{\hat p_1}^{p_1}(N_{\hat p_1})|_{p_1} \leq C \theta \,,
\end{equation*}
where $C$ is a constant depending only on $n$ and $\rho$, and provided that $C\theta < 1/2$.
\end{lemma}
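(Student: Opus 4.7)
The plan is to use the point $\hat p_1^*=(x_1,\hat u(x_1))$, lying directly below $p_1$ on the graph of $\hat u$, as an auxiliary bridge between $p_1$ and the true intersection point $\hat p_1$. Hypothesis \eqref{conan} already controls both the distance between $p_1$ and $\hat p_1^*$ and the proximity of their Euclidean normals, so the strategy is: first show that the geodesic $\gamma$ actually meets $\hat\Sigma$ at a point $\hat p_1$ close to $\hat p_1^*$, and then obtain the two desired estimates by a triangle-inequality argument through $\hat p_1^*$.

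For the intersection step, the graph bound \eqref{5} from lemma \ref{Lemma1}, applied at $e_n$ where $T_{e_n}\Sigma=\{x_n=0\}$, gives $|\nabla u(x_1)|\leq C\bar r/\rho$, so $\nu_{p_1}$ is close to $\pm e_n$ once $\bar r$ is small. Consequently $\gamma$, which in the upper half-space model is either a vertical segment or a circular arc orthogonal to $\{x_n=0\}$, stays inside a thin tube around the vertical segment $[p_1,\hat p_1^*]$. Combined with the hypothesis $u\geq\hat u$ and the touching ball condition on $\hat\Sigma$, this forces $\gamma$ to cross $\hat\Sigma$ transversally at a first point $\hat p_1$, provided $\bar r$ is small enough (depending only on $\rho$). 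A quantitative comparison between $\gamma$ and the segment $[p_1,\hat p_1^*]$ then yields $d(\hat p_1,\hat p_1^*)\leq C\theta$ with $C=C(n,\rho)$, and corollary \ref{corollary_trascendental} upgrades this to the analogous bound for the intrinsic distance on $\hat\Sigma$.

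Granting the location of $\hat p_1$, the distance estimate is immediate from the triangle inequality,
\[
d(p_1,\hat p_1)\leq d(p_1,\hat p_1^*)+d(\hat p_1^*,\hat p_1)\leq C\theta.
\]
For the normal estimate I split
\[
|N_{p_1}-\tau_{\hat p_1}^{p_1}(N_{\hat p_1})|_{p_1}\leq |N_{p_1}-\tau_{\hat p_1^*}^{p_1}(N_{\hat p_1^*})|_{p_1}+\bigl|\tau_{\hat p_1^*}^{p_1}\bigl(N_{\hat p_1^*}-\tau_{\hat p_1}^{\hat p_1^*}(N_{\hat p_1})\bigr)\bigr|_{p_1}.
\]
The first term is converted to the Euclidean normal discrepancy $|\nu_{p_1}-\nu_{\hat p_1^*}|$ by accounting for the conformal factor of \eqref{hyp metric} and a small controlled correction from parallel-transporting a fixed vector along a short vertical path, and is therefore $\leq C\theta$ by \eqref{conan}. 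The second term is estimated by proposition \ref{fejahyp2} applied to $\hat\Sigma$ at the nearby points $\hat p_1^*$ and $\hat p_1$, giving a bound $\leq C\,d_{\hat\Sigma}(\hat p_1^*,\hat p_1)\leq C\theta$ by the previous paragraph.

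The main obstacle is the quantitative control of the intersection step: both the existence of $\hat p_1$ and the bound $d(\hat p_1,\hat p_1^*)\leq C\theta$ must be derived uniformly in $\rho$, and this is precisely where the smallness threshold $\bar r$ is forced on us. Once $\bar r$ is chosen so that the Euclidean deviation of $\gamma$ from a vertical segment is small with respect to $\rho_1$, the remaining steps reduce to repeated applications of lemma \ref{Lemma1}, corollary \ref{corollary_trascendental}, and proposition \ref{fejahyp2}, all already available in the excerpt.
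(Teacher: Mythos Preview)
The paper does not actually prove this lemma; it is quoted from \cite{INDIANA}. Your outline does, however, match the strategy the paper carries out in full for the spherical analogue (lemma~\ref{lem_p_pstar2}): use lemma~\ref{Lemma1} to make $\nu_{p_1}$ close to $e_n$ so that $\gamma$ is nearly vertical, invoke the interior and exterior touching balls of $\hat\Sigma$ to force $\gamma\cap\hat\Sigma\neq\emptyset$, bound $d(p_1,\hat p_1)$ by a geodesic-triangle argument (the paper uses the cosine rule rather than a direct ``tube'' comparison, but the content is the same), and then split the normal discrepancy through $\hat p_1^*$.

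There is one genuine gap in your normal-vector step. The displayed inequality
\[
|N_{p_1}-\tau_{\hat p_1}^{p_1}(N_{\hat p_1})|_{p_1}\leq |N_{p_1}-\tau_{\hat p_1^*}^{p_1}(N_{\hat p_1^*})|_{p_1}+\bigl|\tau_{\hat p_1^*}^{p_1}\bigl(N_{\hat p_1^*}-\tau_{\hat p_1}^{\hat p_1^*}(N_{\hat p_1})\bigr)\bigr|_{p_1}
\]
is \emph{not} a triangle inequality: the second summand equals $|\tau_{\hat p_1^*}^{p_1}(N_{\hat p_1^*})-\tau_{\hat p_1^*}^{p_1}\tau_{\hat p_1}^{\hat p_1^*}(N_{\hat p_1})|_{p_1}$, and for your bound to follow you are tacitly assuming $\tau_{\hat p_1^*}^{p_1}\circ\tau_{\hat p_1}^{\hat p_1^*}=\tau_{\hat p_1}^{p_1}$. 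This fails in $\mathbb H^n$ because parallel transport is path-dependent. The missing holonomy term $|\tau_{\hat p_1^*}^{p_1}\tau_{\hat p_1}^{\hat p_1^*}(N_{\hat p_1})-\tau_{\hat p_1}^{p_1}(N_{\hat p_1})|_{p_1}$ is controlled by the area of the geodesic triangle $p_1\hat p_1^*\hat p_1$, hence is $O(\theta^2)$, so the argument is repairable; but as written the step is incorrect. In the proof of lemma~\ref{lem_p_pstar2} the paper sidesteps this issue entirely: rather than composing transports through $\hat p_1^*$, it compares $\tau_{\hat p_1^*}^{p_1}(N_{\hat p_1^*})$ and $\tau_{\hat p_1}^{p_1}(N_{\hat p_1})$ directly at the common basepoint via an explicit estimate (inequality~\eqref{coop4}), which already absorbs the holonomy correction.
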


\begin{figure}[h]
	\centering
	\includegraphics[scale=1]{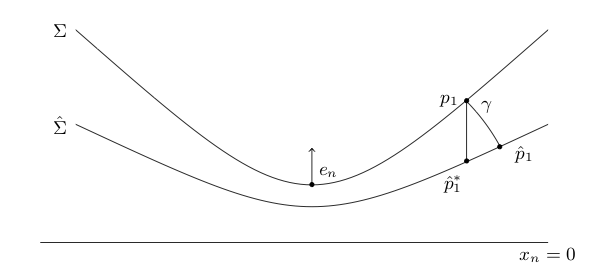}
	\caption{The statement of lemma \ref{lem_p_pstar1}}\label{figLuigi}
\end{figure}


The last result of this section is the \lq\lq spherical counterpart\rq\rq of lemma  \ref{lem_p_pstar1}. 

\begin{lemma} \label{lem_p_pstar2}
Let $\Sigma$ and $\hat \Sigma$  be two compact embedded hypersurfaces in $\mathbb R^n$ with the round metric \eqref{round metric} satisfying a touching ball condition of radius $\rho$. 
Assume $O\in \Sigma$ and $T_{O}\Sigma=\{x_n=0\}$ and that there exist two local parametrizations $u,\hat u: B_{r} \to \RR$ of \,$\Sigma$ and
 $\hat \Sigma$, respectively, with $0<r\leq \rho_1$ and such that $u - \hat u  \geq 0$.
Let $p_1=(x_1,u(x_1))$ and $\hat p_1^*=(x_1,\hat u(x_1))$, with $x_1\in \partial B_{r/4}$, and denote by $\gamma$ the geodesic path starting from $p_1$ and tangent to 
$-\nu_{p_1}$ at $p_1$. Assume that 
\begin{equation} \label{conan2}
d(p_1,\hat p_1^*) + |\nu_{p_1} - \nu_{\hat p_1^*} | \leq \theta \,. 
\end{equation}
for some $\theta \in [0, 1/2]$.  where $\nu$ is the Euclidean unitary normal vector field to $\Sigma$. There exists $\bar r$ depending only on $\rho$ such that if $r \leq \bar r$ we have that $\gamma \cap \hat \Sigma \neq \emptyset$ and, if we denote by $\hat p_1$ the first intersection point between $\gamma$ and $\hat \Sigma$, then
\begin{equation*}
d(p_1,\hat p_1) + |N_{p_1} - \tau_{\hat p_1}^{p_1}(N_{\hat p_1})|_{p_1} \leq C \theta \,,
\end{equation*}
where $C$ is a constant depending only on $n$ and $\rho$, and provided that $C\theta < 1/2$.
\end{lemma}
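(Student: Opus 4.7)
The strategy is to adapt, essentially verbatim, the argument used for the hyperbolic case in Lemma \ref{lem_p_pstar1}, exploiting the fact that the round metric \eqref{round metric} is conformally Euclidean with a smooth conformal factor that is bounded between two positive constants on any bounded region. Writing $\lambda(x):=2/(1+|x|^2)$, so that $g_x=\lambda(x)^2\langle\cdot,\cdot\rangle$, and recalling that by Lemma \ref{Lemma1} the round touching ball condition of radius $\rho$ translates into an Euclidean touching ball condition of radius $\rho_1=\rho/\pi$, both surfaces $\Sigma$ and $\hat\Sigma$ can be analysed in Euclidean coordinates on $B_r\times \RR$ with $r\leq\rho_1$, and the comparison \eqref{mid} of Lemma \ref{distance} will convert all Euclidean estimates into round ones.

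The first step is to compare the spherical geodesic $\gamma$ with the Euclidean straight line $\sigma(t):=p_1-t\,\nu_{p_1}$. Both curves emanate from $p_1$ and have initial velocity parallel to $-\nu_{p_1}$; since the Christoffel symbols of $g$ are smooth functions of $\nabla\log\lambda$ that are uniformly bounded on the ball in which we work (by a constant depending only on $\rho$), the standard ODE comparison applied to the geodesic equation $\ddot\gamma^k+\Gamma^k_{ij}\dot\gamma^i\dot\gamma^j=0$ yields
\begin{equation*}
|\gamma(t)-\sigma(t)|_{\text{Eucl}}\leq C\, t^2 \quad\text{for small }t,
\end{equation*}
with $C=C(\rho)$. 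In particular, for $r\leq \bar r$ small enough the curve $\gamma$ stays inside the cylinder where both graphs are defined.

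Next, in the Euclidean picture the segment $\sigma$ meets the graph of $\hat u$ at a point $\hat p_1^E$, and by the arguments already carried out in the proof of Lemma 2.3 of \cite{JEMS} (which only use the Euclidean touching ball condition of radius $\rho_1$ and hypothesis \eqref{conan2}, identical in form to the Euclidean hypothesis) one obtains
\begin{equation*}
|p_1-\hat p_1^E|_{\text{Eucl}}+|\nu_{p_1}-\nu_{\hat p_1^E}|\leq C\theta.
\end{equation*}
Combining this with the $C\,t^2$ deviation estimate of the previous paragraph, and using that the function $\hat u$ has a bounded second derivative (a quantitative manifestation of the touching ball condition on $\hat\Sigma$, cf.\ Lemma \ref{Lemma1} (i)), the spherical geodesic $\gamma$ crosses $\hat\Sigma$ at a first point $\hat p_1$ which differs from $\hat p_1^E$ by at most a term of order $\theta^2$. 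Hence
\begin{equation*}
|p_1-\hat p_1|_{\text{Eucl}}+|\nu_{p_1}-\nu_{\hat p_1}|\leq C\theta,
\end{equation*}
for a possibly larger constant $C=C(n,\rho)$, provided $C\theta<1/2$.

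It remains to convert this Euclidean statement into the one claimed in the lemma. The distance bound $d(p_1,\hat p_1)\leq C\theta$ follows immediately from \eqref{mid}. For the second summand, the relation $N=\nu/\lambda$ shows that the $g$-unit normals satisfy $|N_{p_1}-N_{\hat p_1}|_{\text{Eucl}}\leq C\theta$ after absorbing the smooth variation of $\lambda$, and the parallel transport $\tau_{\hat p_1}^{p_1}$ is a rotation in the plane containing the connecting geodesic by an angle equal to the round length of that geodesic, hence its deviation from the identity is controlled by $d(p_1,\hat p_1)\leq C\theta$. Putting these together yields
\begin{equation*}
|N_{p_1}-\tau_{\hat p_1}^{p_1}(N_{\hat p_1})|_{p_1}\leq C\theta,
\end{equation*}
as required. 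The main technical obstacle is the bookkeeping in this last step, where one has to track simultaneously the conformal rescaling between $\nu$ and $N$ and the effect of the round parallel transport; the explicit form $\tau_p^q=(P_q\circ R_\alpha)_{|T_p\mathbb S^n}$ recalled after Lemma \ref{distance} makes all these error terms linear in $d(p_1,\hat p_1)$, which closes the argument.
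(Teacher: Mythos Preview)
Your approach is essentially correct and takes a genuinely different route from the paper's proof. Where you invoke a general ODE comparison between the spherical geodesic $\gamma$ and the Euclidean segment $\sigma(t)=p_1-t\,\nu_{p_1}$ to obtain a $Ct^2$ deviation and then perturb off the Euclidean result of \cite{JEMS}, the paper instead computes explicitly that $\gamma$, viewed as a Euclidean circle, has radius $R=O(|x_1|^{-2})$; the existence of $\hat p_1$ is then obtained by showing that $\gamma$ must cross both the interior and the exterior touching balls of $\hat\Sigma$ at $(0,\hat u(0))$, and the bound $d(p_1,\hat p_1)\leq C\theta$ comes from the cosine rule for spherical triangles applied to a geodesic right-angle triangle with one vertex at $p_1$, one leg on the vertical line through $\hat p_1^*$, and hypotenuse along $\gamma$. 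For the normal/parallel-transport step, the paper does not argue abstractly but develops an explicit inequality in stereographic coordinates (their formula \eqref{coop4}) bounding $2|\tau_p^O(v)-\tau_q^O(w)|$ in terms of $d(p,O)^2+d(q,O)^2$, $d(p,q)$ and $|v-\tau_q^p(w)|_p$, and then specializes it to $(p,q,v,w)=(\hat p_1^*,\hat p_1,N_{\hat p_1^*},N_{\hat p_1})$.

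Your perturbation argument is cleaner and more portable---it would go through verbatim for any metric conformal to the Euclidean one with smooth, uniformly bounded conformal factor---while the paper's argument has the virtue of making the spherical geometry explicit and producing concrete constants. One place you should tighten: the last paragraph asserts that the deviation of $\tau_{\hat p_1}^{p_1}$ from the identity is linear in $d(p_1,\hat p_1)$, which is true but deserves one line of justification (integrate the parallel-transport ODE $\dot V^k+\Gamma^k_{ij}\dot\gamma^iV^j=0$ over the short geodesic, using that $\Gamma$ and $|\dot\gamma|$ are bounded in terms of $\rho$); the paper's inequality \eqref{coop4} is precisely this bound worked out by hand.
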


\begin{proof}
We first notice that, by choosing $r$ small enough in terms of $\rho$, from lemma \ref{Lemma1} we have that $|\nu_{p_1} - e_n | \leq  1/4$. We observe  
that the geodesic $\gamma$ is almost flat, i.e., viewed as an Euclidean circle its radius $R$ satisfies  
\begin{equation}\label{OR}
R= O\left(\dfrac{1}{|x_1|^2}\right) \mbox{ as $|x_1| \to 0$}\,. 
\end{equation}
Indeed, up to apply a rotation, we may assume that both $p_1$ and $\nu_{p_1}$ belong to the plane $\pi_1$ spanned by $\{e_1,e_2\}$.   In this way, the geodesic path $\gamma$   
belongs to the plane $\pi_1$ and we can work in a \lq\lq bidimensional way\rq\rq. We can write $p_1=(x_1,y_1)$ and $\nu_{p_1}=(\nu_1,\nu_2)$ and we compute the geodesic $\gamma$ passing through $p_1$ and tangent to $\nu_{p_1}$. We solve 
\begin{equation*}
\begin{cases}
(x_1+a)^2+(y_1+b)^2=1+a^2+b^2 \\ (x_1+a,y_1+b)\cdot(\nu_1,\nu_2)=0
\end{cases}
\end{equation*}
and we find 
\begin{equation*}
a=\dfrac{(1-|p_1|^2)\nu_2+2y_1  (p_1\cdot\nu)}{2(x_1\nu_2-y_1\nu_1)}  \,,\quad  b=\dfrac{-(1-|p_1|^2)\nu_1-2x_1(p_1\cdot\nu)}{2(x_1\nu_2-y_1\nu_1)}.
\end{equation*}
If $|x_1| \to 0$, according to lemma \ref{Lemma1}, we have that $y_1=O(x_1^2)$, $\nu_1=O(x_1)$ and $\nu_2=1+o(1)$; so we get that $a\sim\frac{1}{2x_1}$, $b$ is bounded and \eqref{OR} follows.

Let $B^+$ and $B^-$ be the exterior and interior touching balls of $\hat \Sigma$ at $
\hat p_0=(0,\hat u (0))$, respectively.  A standard geometrical argument shows that it is possible to choose $\bar r$ small enough in terms of $\rho$ such that $\gamma$ intersects $B^+$ and 
$B^-$ at points which are distant from the origin less than $\bar r$. This implies the existence of the point $\hat p_1$ in the assertion for any $r \leq \bar r$.  

Now we estimate the distance between $p_1$ and $\hat p_1$ as follows. Let $q$ be the unique point having distance $2\ep$ from $p_1$ and lying on the geodesic path containing $p_1$ and $\hat p_1^*$. Let $T$ be the geodesic right-angle triangle having vertices $p_1$ and $q$ and hypotenuse contained in the geodesic passing through $p_1$ and $\hat p_1$ (see figure \ref{figLuigi2}). Since the angle $\alpha$ at the vertex $p_1$ is such that $|\sin \al| \leq 1/4$, then from the cosine rule for spherical triangles we have that 
\begin{equation} \label{conad2}
d(p_1, \hat p_1) \leq C \theta \,.
\end{equation}
Moreover, the triangle inequality gives that
\begin{equation} \label{conad3}
d(\hat p_1^*, \hat p_1) \leq C \theta 
\end{equation}
for some constant $C$, and from \eqref{bound on nu N+1} we obtain that 
\begin{equation} \label{coop1}
|N_{p_1} - \tau_{\hat p_1}^{p_1}(N_{\hat p_1})|_{p_1} \leq |N_{p_1} - \tau_{\hat p_1^*}^{p_1}(N_{\hat p_1^*})|_{p_1}+ |\tau_{\hat p_1^*}^{p_1}(N_{\hat p_1^*}) - \tau_{\hat p_1}^{p_1}(N_{\hat p_1})|_{p_1} \,.
\end{equation}
Since $p_1$ and $\hat p_1^*$ are on the same vertical line \eqref{conan2} implies 
\begin{equation} \label{coop2}
|N_{p_1} - \tau_{\hat p_1^*}^{p_1}(N_{\hat p_1^*})|_{p_1} = |\nu_{p_1} - \nu_{\hat p_1^*}| \leq C \theta \,.
\end{equation}

As next step we show that 
\begin{equation}\label{coop3}
|\tau_{\hat p_1^*}^{p_1}(N_{\hat p_1^*}) - \tau_{\hat p_1}^{p_1}(N_{\hat p_1})|_{p_1}  \leq C \theta \,.
\end{equation}

We obtain \eqref{coop3} by showing that if $p,q\in \mathbb R^n$ belong to the Euclidean ball centered at the origin and having radius $s$, then 
\begin{equation}\label{coop4}
2|\tau_{p}^O(v)-\tau_{q}^O(w)| \leq \frac{(1+s)^2}{4}\,\left(d(p,O)^2+d(q,O)^2\right)+|v-\tau_{q}^p(w)|_p+\frac92 d(p,q)\,.
\end{equation}
for every $v,w\in \RR^n$, $|v|_q=|w|_{\hat q}=1$. We have 
$$
\tau_{p}^O(v)=\frac{1}{1+|p|^2} v \,,\quad \tau_{q}^O(w)=\frac{1}{1+|q|^2} w.
$$
and using Cauchy-Schwarz inequality and taking into account lemma \ref{distance} we have
$$
||q|^2-|p|^2|=|(q -p) \cdot (q + p)|\leq |q-p||q+p|\leq  s(1+s^2)\,d(p,q)
$$
since
$$
|w|=\frac{1+|\hat q|^2}{2}
$$
we have 
$$
2\left|\frac{1}{1+|q|^2}\tau_{p}^O(w)-\frac{1}{1+| q|^2}\tau_{ q}^O(w)\right|=2\left|\frac{| q|^2-|p|^2}{(1+|p|^2)(1+|q|^2)}\right|\leq 2s(1+s^2)\,d(p,q)\,.
$$
Now using  
$$
2\left|\tau_{p}^O(v)\right|=1 \,, \quad 2\left|\tau_{ q}^O(w)\right|=1;
$$
and 
$$
|v|=\frac{1+|p|^2}{2}, \quad |w|=\frac{1+|q|^2}{2}\,,
$$
we compute 
$$
\begin{aligned}
2|\tau_{p}^O(v)-\tau_{q}^O(w)| \leq &\,2 \left|\tau_{p}^O(v)-\frac{1}{1+|p|^2}\tau_p^O(v)\right|+2\left|\frac{1}{1+|p|^2}\tau_p^O(v)-\tau_{q}^O(w)\right|\\ 
\leq &\,2\left|1-\frac{1}{1+|p|^2}\right| \left|\tau_{p}^O(v)\right|+2\left|\frac{1}{1+|p|^2}\tau_q^O(v)-\frac{1}{1+|q|^2}\tau_{p}^O(w)\right|\\
&\,+2\left|\frac{1}{1+|q|^2}\tau_{p}^O(w)-\frac{1}{1+| q|^2}\tau_{ q}^O(w)\right|+2\left|1-\frac{1}{1+|q|^2}\right| |\tau_{q}^O(w)|\\ 
\leq &\,\left|1-\frac{1}{1+|p|^2}\right|+2\left|\frac{1}{1+|p|^2}\tau_q^O(v)-\frac{1}{1+|q|^2}\tau_{p}^O(w)\right|+4d(p, q)+\left|1-\frac{1}{1+|q|^2}\right| \\
\leq &\,|p|^2+2\left|\frac{1}{1+|p|^2}\tau_q^O(v)-\frac{1}{1+|q|^2}\tau_{p}^O(w)\right|+4d(p,q)+\left|q\right|^2 \\
\leq &\frac{(1+s)^2}{4}\,\left(d(p,O)^2+d(q,O)^2\right)+2\left|\frac{1}{1+|p|^2}\tau_p^O(v)-\frac{1}{1+|q|^2}\tau_{p}^O(w)\right|+4d(p, q)\,. 
\end{aligned}
$$

Now  we  show that  
$$
2\left|\frac{1}{1+|p|^2}\tau_p^O(v)-\frac{1}{1+|q|^2}\tau_{p}^O(w)\right|\leq |v-\tau_{q}^p(w)|_p+\frac12 d(p,q)\,. 
$$
Let  $\sigma$ be the geodesic path connecting $p$ with $q$. Then $\sigma$ is contained in a circle of $\RR^n $ and denotes by $C$ its center and by $\alpha$ the angle between $p-C$ and $q-C$.  Then 
$$
\frac{1+|p|^2}{1+|q|^2}w=R_{\alpha}\tau_{q}^pw\,,\mbox{ for every }w\in \RR^n\,,
$$ 
where $R_\alpha$ is the rotation (clockwise or anti-clockwise) about $\alpha$ in the plane containing $C$ and is the identity in the complement.  Therefore we have 
$$
\begin{aligned}
2\left|\frac{1}{1+|p|^2}\tau_p^O(v)-\frac{1}{1+|q|^2}\tau_{p}^O(w)\right|= &\,\left| \frac{1}{1+|p|^2}v-\frac{1}{1+|q|^2}w\right |_p\leq \left| v-\frac{1+|p|^2}{1+|q|^2}w\right |_p=
\left| v-R_{\alpha}\tau_{q}^pw\right |_p\\
\leq &\, \left| v-\tau_{q}^pw\right |_p+\left| \tau_{q}^pw-R_{\alpha}\tau_{q}^pw\right |_p
\end{aligned}
$$
and, consequently, we deduce,  
$$
\left| \tau_{q}^pw-R_{\alpha}\tau_{q}^pw\right |_p\leq |\alpha|\leq \frac{1}{2}d(p,q)
$$
which implies \eqref{coop4}. 

Therefore, by applying \eqref{coop4} to $|\tau_{\hat p_1^*}^{p_1}(N_{\hat p_1^*}) - \tau_{\hat p_1}^{p_1}(N_{\hat p_1})|_{p_1}$, we have 
$$
\begin{aligned}
|\tau_{\hat p_1^*}^{p_1}(N_{\hat p_1^*}) - \tau_{\hat p_1}^{p_1}(N_{\hat p_1})|_{p_1} & \leq \frac{(1+C\theta)^2}{4} \left(d(p_1,\hat p_1^*)^2 + d(p_1,\hat p_1)^2\right) +  |N_{\hat p_1^*} - \tau^{\hat p_1^*}_{\hat p_1} (N_{\hat p_1^*}) |_{\hat p_1^*}+\frac92 d(\hat p_1, \hat p_1^*)  \\
& \leq C \theta \,,
\end{aligned}
$$
where the last inequality follows from \eqref{conan2},\eqref{conad2},\eqref{conad3} and \eqref{bound on nu N+1}. This last inequality, \eqref{coop1} and \eqref{coop2} imply that
$$
|N_{p_1} - \tau_{\hat p_1}^{p_1}(N_{\hat p_1})|_{p_1} \leq C \theta \,,
$$
and therefore from \eqref{conad2} we conclude.
\end{proof}

\begin{figure}[h]
	\centering
	\includegraphics[scale=1]{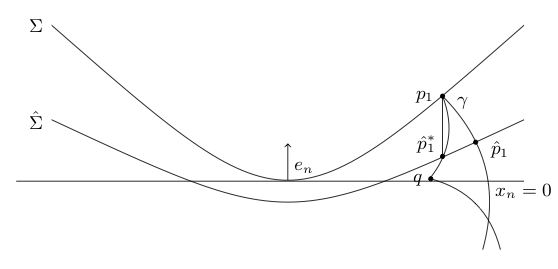}
		\caption{The statement and the proof of lemma \ref{lem_p_pstar2} }\label{figLuigi2}
\end{figure}

%
%
%
%
%
%
%
%
%
%
%
%

\section{Curvatures of projected surfaces in conformally Euclidean spaces} \label{subsect Luigi}

In this section we consider a connected open set $\Omega$ in $\RR^n$ equipped with a metric $g=h^2\,\langle\cdot,\cdot\rangle$ conformal to the Euclidean metric. We further assume the existence of an Euclidean hyperplane $\pi$ of $\RR^n$ such that $\Omega\cap \pi$ is a totally geodesic hypersurface in $\Omega$. This setting includes the Euclidean space, the hyperbolic space and $\RR^n$ with the round metric \eqref{round metric}. For instance in the half-space model of the hyperbolic space we can take as $\pi$ any vertical Euclidean hyperplane; in the spherical case we can consider Euclidean hyperplanes passing through the origin.

For our purposes, we consider a hypersurface $U$ of class $C^2$ embedded in $\Omega$ which intersects
$\pi$ transversally.  The implicit function theorem implies that $U'=U\cap \pi$ is a $C^2$-submanifold of $\pi$. Furthermore if $\nu_q$ is an Euclidean unit normal 
vector field to $U$ and $w$ is a unit normal vector to $\pi$, we have that 
$$
\nu'_q=(-1)^n*(*(\nu_q\wedge w)\wedge w)\,,\quad q\in U',
$$  
is an Euclidean  unitary normal vector field to $U'$ in $\pi$, where $*$ is the Euclidean Hodge star operator in $\RR^n$. In particular 
$U'$ is orientable in $\pi$. 
Let 
$$
N_q=\frac{1}{h(q)}\nu_q\,,\quad N'_q= \frac{1}{h(q)}\nu'_q
$$
be the normal vectors with respect to the metric $g$ and
$$
\omega_p=\frac{1}{h(p)}w\,,\quad p\in \Omega\,. 
$$

\begin{proposition}\label{prop Luigi I}
Let  $\kappa_j$, $j=1,\ldots,n-1$ be the principal curvatures of $U$ with respect to metric $g$ and to the orientation induced by $N$. Then the 
principal curvatures $\kappa_i'$ of $U'$ (viewed as submanifold of $\pi$) with respect to the orientation induced by $N'$  satisfy
\begin{equation} \label{bound curv I'}
\frac{1}{\sqrt{1-g_q(\omega_q,N_q)^2}}\kappa_1(q)\leq \kappa'_i(q)\leq \frac{1}{ \sqrt{1-g_q(\omega_q,N_q)^2}}\kappa_{n-1}(q) \,,
\end{equation}
for every $q\in U'$. Moreover, the principal curvatures ${\check \kappa}_i'$ of $U'$ seen  as a hypersurface of $U$ satisfy
\begin{equation} \label{milan}
|\check \kappa'_i(q)|\leq \frac{|g_q(\omega_q,N_q)|}{\sqrt{1-g_q(\omega_q,N_q)^2}}\,\max\{|\kappa_1(q)|,|\kappa_{n-1}(q)|\}\,,
\end{equation}
for every $q\in U'$. 

\end{proposition}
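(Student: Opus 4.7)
The plan is to derive both inequalities from a single comparison of second fundamental forms, exploiting the fact that $\pi$ is totally geodesic in $(\Omega,g)$.

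At $q\in U'$, a direct computation using $\omega_q=w/h(q)$, $N_q=\nu_q/h(q)$ and $g_q=h(q)^2\langle\cdot,\cdot\rangle$ gives $\alpha:=g_q(\omega_q,N_q)=w\cdot\nu_q$, so $|\alpha|<1$ by the transversality of $U$ and $\pi$. The formula defining $\nu'_q$ then yields the $g$-orthogonal decomposition $N_q=\alpha\,\omega_q+\sqrt{1-\alpha^2}\,N'_q$. Fix $X,Y\in T_qU'$ and extend $Y$ to a tangent field of $U'$ along $U'\subset\pi$: since $\pi$ is totally geodesic in $(\Omega,g)$, the ambient $g$-covariant derivative $\nabla^g_XY$ lies in $T_q\pi$, hence is $g$-orthogonal to $\omega_q$. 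Substituting into $II_U(X,Y)=g(\nabla^g_XY,N_q)$ yields
\begin{equation*}
II_U(X,Y)\;=\;\sqrt{1-\alpha^2}\,II_{U'}(X,Y),\qquad X,Y\in T_qU',
\end{equation*}
so that, with $P\colon T_qU\to T_qU'$ the $g$-orthogonal projection, the shape operators satisfy $A_{U'}=(1-\alpha^2)^{-1/2}\,P\circ A_U|_{T_qU'}$.

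The inequality \eqref{bound curv I'} then follows from the Poincar\'e separation (Cauchy interlacing) theorem: the eigenvalues $\mu_1\leq\dots\leq\mu_{n-2}$ of the $g$-symmetric operator $P\circ A_U|_{T_qU'}$ interlace the eigenvalues $\kappa_1\leq\dots\leq\kappa_{n-1}$ of $A_U$, namely $\kappa_j\leq\mu_j\leq\kappa_{j+1}$, so in particular $\kappa_1\leq\mu_j\leq\kappa_{n-1}$; dividing by $\sqrt{1-\alpha^2}$ produces the claim.

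For \eqref{milan}, the $g$-unit normal to $U'$ inside $U$ is the normalization of $\omega_q-\alpha N_q$, which by the same decomposition equals $\tilde N_q=\sqrt{1-\alpha^2}\,\omega_q-\alpha\,N'_q$. Repeating the computation above with $\tilde N_q$ in place of $N_q$ and using once more that $\nabla^g_XY\in T_q\pi$ to kill the $\omega_q$-term gives $\check{II}(X,Y)=-\alpha\,II_{U'}(X,Y)$, whence $\check\kappa'_j=-\alpha\,\kappa'_j$ and \eqref{milan} is immediate from \eqref{bound curv I'}. The only non-routine input throughout is the totally-geodesic identity $\nabla^g_XY\in T_q\pi$; beyond that the argument reduces to orthogonal-decomposition bookkeeping and the standard Cauchy interlacing inequality.
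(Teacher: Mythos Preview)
Your proof is correct and follows the same core strategy as the paper: decompose $N_q$ in the $g$-orthonormal frame $(\omega_q,N'_q)$, use that $\pi$ is totally geodesic to kill the $\omega$-component of $\nabla^g_XY$, and read off the relation between the second fundamental forms. The paper carries this out via normal curvatures (for a unit $v\in T_qU'$ one has $\kappa_q(v)=g_q(N_q,N'_q)\,\kappa'_q(v)$, and then simply $\kappa_1\le\kappa_q(v)\le\kappa_{n-1}$), together with an explicit Hodge-star identity to compute $g_q(N_q,N'_q)$; your route via shape operators is equivalent.

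Two minor remarks. First, invoking the Poincar\'e separation theorem is more than you need: since each $\kappa'_i$ equals $g_q(A_{U'}v_i,v_i)$ for a $g$-unit eigenvector $v_i$, the relation $II_U=\sqrt{1-\alpha^2}\,II_{U'}$ on $T_qU'$ immediately gives $\kappa_1\le\sqrt{1-\alpha^2}\,\kappa'_i\le\kappa_{n-1}$ by the elementary Rayleigh bound, which is exactly what the paper uses. Second, your treatment of \eqref{milan} is in fact cleaner than the paper's: writing $\tilde N_q=\sqrt{1-\alpha^2}\,\omega_q-\alpha\,N'_q$ directly and reading off $\check{II}=-\alpha\,II_{U'}$ (hence $\check\kappa'_j=-\alpha\,\kappa'_j$) bypasses the Hodge-star manipulations the paper performs to reach the same conclusion.
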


%
%

\begin{proof}
Let  $v\in T_qU'$  satisfy $|v|_q=1$ and
$$
\kappa_q(v)=g_q(\nabla_{v}\tilde N,v)\,,
$$
where $\tilde N$ denotes an extension of $N$ in $\Omega$ and $\nabla$ is the Levi-Civita connection of $g$.
For $p\in U'$, $N_q$ is orthogonal to $T_qU'$ and consequently it lies on the plane spanned by $ w$ and $N'_q$ and hence 
$$
N=a\,  w+ b N'\,,
$$
where $a$ is a function on $U'$ and 
$$
b=g(N,N')\,.
$$
If  $\tilde a$, $\tilde b$ and $\tilde N'$ are extensions of $a$, $b$ and $N'$ in $\Omega$, 
$$
\tilde N=\tilde a\,   w+ \tilde b\,\tilde N'
$$
defines an extension of $N$ and a direct computation yields 
$$
\kappa_p(v)=a(p)\,g_p(\nabla_{v}  w,v)+b(p)\,g_p(\nabla_{v} \tilde N',v)=b(p)\,g_p(\nabla_{v} \tilde N',v)\,,
$$
where we used that  $\pi\cap \Omega$ is totally geodesic. 
Therefore
$$
\frac{1}{g_q(N_q,N'_q)}\kappa_q(v)=g_q(\nabla_{v} \tilde N',v)
$$
and consequently
\begin{equation*} \label{bound curv I}
\frac{1}{g_q( N_q,N_q')}\kappa_1(q)\leq \kappa'_i(q)\leq \frac{1}{ g_q( N_q,N_q') }\kappa_{n-1}(q) 
\end{equation*}
for every $q\in U'$ and $i=1,\dots,n-2$. 

Now we show  
\begin{equation}\label{inter}
g_q( N_q,N_q')=1-g_q(\omega_q,N_q)^2 \,, 
\end{equation}
which implies  \eqref{bound curv I'}. We have
$$
(\nu_q\cdot \nu'_q)=(-1)^n*(*(\nu_q\wedge  w)\wedge  w)\cdot\nu_q=(-1)^n *(\nu_q\wedge  w)\wedge  w\cdot*\nu_q\,.
$$
Let $\{\nu_q,e_1,\dots,e_{n-1}\}$ be a positive-oriented orthonormal basis of $\mathbb R^{n}$
 such that 
\begin{itemize}
\item $\{e_1,\dots,e_{n-1}\}$ is a positive-oriented Euclidean-orthonormal basis of $T_{q}U$;

\vspace{0.1cm}
\item $\{e_2,\dots,e_{n-1}\}$ is a basis of $T_{q}U'$. 
\end{itemize}
In this way $ w\in \langle \nu_q,e_1\rangle$,
$$
*(\nu_q\wedge  w)=(w\cdot e_{1}) \,e_2\wedge \dots \wedge e_{n-1}\,,\quad  *\nu_q= e_1\wedge \dots \wedge e_{n-1},
$$
and
$$
\begin{aligned}
 \nu_q\cdot \nu'_q=&\,(-1)^n *(\nu_q\wedge w)\wedge w\cdot*\nu_q\\
 =&\,(-1)^n(w\cdot e_{1}) \,e_2\wedge \dots \wedge e_{n-1}\wedge w\cdot e_1\wedge \dots \wedge e_{n-1} \\
=&\,(-1)^n(w\cdot e_{1})^2 \,e_2\wedge \dots \wedge e_{n-1}\wedge e_1\cdot e_1\wedge \dots \wedge e_{n-1}\\
=&\,(w\cdot e_{1})^2 \,e_1\wedge \dots \wedge e_{n-1}\cdot e_1\wedge \dots \wedge e_{n-1}\\
=&\,(w\cdot e_{1})^2\,. 
\end{aligned}
$$
Since $|w|=1$, we have $(w\cdot e_{1})^2=1-(w\cdot \nu_q)^2$ and so 
\begin{equation}\label{*formula}
\nu_q\cdot \nu'_q=1-(w\cdot \nu_q)^2\,.
\end{equation}
Since
$$
\nu_q\cdot \nu'_q=g_q(N_q,N_q')\,,\quad \mbox{ and }\quad  w\cdot \nu_q=g_q(\omega_q,N_q) \,,
$$
\eqref{inter} follows.

Now we prove \eqref{milan}. In this case we regard $U'$ as a submanifold of $U$.  Let $q\in U'$, $v\in T_qU'$ such that $|v|_q=1$ and let $\alpha\colon (-\delta,\delta)\to S$ be a curve satisfying $\alpha(0)=q$, $\dot{\alpha}(0)=v$, $|\dot\alpha|_\alpha=1$. Let $\tilde N'$ be a unitary normal vector field of $U'$ in $U$ near $q$.   We may complete $v$ with an orthonormal basis $\{v,v_2,\dots v_{n-2}\}$ of $T_qU'$ such that 
$$
\check N'_q=*_q(N_q\wedge v\wedge v_2\wedge\dots \wedge v_{n-2})\,,
$$ 
where $*_q$ is the Hodge star operator at $q$ in $\Omega $ with respect to $g$ and to the standard orientation. Let 
$$
\check \kappa'_q(v)=g_{q}(*_q(\check N_q\wedge v\wedge v_2\wedge\dots \wedge v_{n-2}),D_t\dot \alpha_{|t=0})\,,
$$
where $D_t$ is the covariant derivative in $(\Omega,g)$.
Since $D_{t}\dot\alpha_{|t=0}\in \pi$, we have 
$$
\check \kappa'_q(v)=g_q(N_q,\omega_q)g_{q}(*_q(\omega_q\wedge v\wedge v_2\wedge\dots \wedge v_{n-2}),D_t\dot \alpha_{|t=0})\,. 
$$
Now, $*_q(\omega_q\wedge v\wedge v_2\wedge\dots \wedge v_{n-2})$ is a normal vector to $T_qU'$ in $\pi$ and so  
$$
\check \kappa'_q(v)=g_q(N_q,\omega_q)g_q(\nabla_{v}\tilde N,v)\,,
$$
where $\tilde N$ is an arbitrary extension of $N$ in a neighborhood of $q$. From \eqref{bound curv I'} we obtain
$$
|\check \kappa'_q(v)|\leq \frac{|g_q(N_q,\omega_q)|}{\sqrt{1-g_q(\omega_q,N_q)^2}}\,\max\{|\kappa_1(q)|,|\kappa_{n-1}(q)|\}\,,
$$
as required. 
\end{proof}

\begin{remark} 
It may be convenient to explain the meaning of \eqref{*formula} when $n=3$. In this case $*(v\wedge w)$ is the vector product $v\times w$ and so 
$$
\nu'_q=-(\nu_q\times w)\times w
$$
and 
$$
\nu_q\cdot \nu'_q=-(\nu_q\times w)\times w \cdot\nu_q =-(\nu_q\times w)\cdot(w\times\nu_q)=|\nu_q\times w|^2=|\nu_q|^2|w|^2-(\nu_q\cdot w)^2=1-(\nu_q\cdot w)^2\,. 
$$
\end{remark}

%

Now we focus in a different setting.  Let $\bar \Omega$ be the projection of $\Omega$ onto $\{x_n=0\}$ and let $\pi\subseteq \Omega$ be the graph of a $C^2$ function $F\colon A\to \RR $, where $A\subseteq \bar\Omega$ is a open subset. 

\begin{proposition} \label{prop Luigi II}
Let $U'$ be a $C^2$ regular oriented hypersurface of $\pi$ and let $U''$ be the orthogonal projection of $U'$ onto $\{x_n=0\}$. Then the principal curvatures of $U''$ satisfy 
\begin{equation}\label{bound curvatures II generale}
|\kappa_i''(\bar q)| \leq  \frac{h(q)}{\sqrt{1+|\nabla F(\bar q)|^2}}\left((\nu_q'\cdot e_{n})^2+\frac{1}{1+|\nabla F(\bar q)|^2}\right)^{-3/2}
\left(\max \{|\kappa_1'(q)|,|\kappa_{n-1}'(q)|\}+4\frac{\left|\nabla h(q)\right|}{h(q)^2}\right) \,,
\end{equation}
for every $i=1,\dots,n-2$,
where $\{\kappa''_i\}$ are the principal curvatures of $U''$ with respect to the Euclidean metric, $q\equiv(\bar q,q_{n})\in U'$ and $\nu'_q=h(q)N'_q$. 
\end{proposition}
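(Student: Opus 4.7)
The strategy is to factor the bound through Euclidean intermediate quantities. First I pass from the $g$-principal curvatures $\kappa'_i$ of $U'$ in $\pi$ to the Euclidean principal curvatures (denoted $\kappa^E_i$ below) of $U'$ viewed as a hypersurface of $\pi$ with the induced Euclidean metric; then I compare $\kappa^E_i$ with the Euclidean principal curvatures $\kappa''_i$ of the orthogonal projection $U''$ in $\{x_n=0\}$.

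For the conformal step, $g=h^2\langle\cdot,\cdot\rangle$ restricts to $\pi$ as the same conformal Euclidean metric, so the classical transformation formula for principal curvatures under a conformal rescaling of the ambient gives
$$
\kappa'_i(q)=\frac{1}{h(q)}\Bigl(\kappa^E_i(q)+\frac{\nu'_q\cdot\nabla h(q)}{h(q)}\Bigr),\qquad\text{hence}\qquad \max_i|\kappa^E_i(q)|\le h(q)\max_i|\kappa'_i(q)|+\frac{|\nabla h(q)|}{h(q)},
$$
where $\nu'_q=h(q)N'_q$ is the Euclidean unit normal of $U'$ in $\pi$; the same inequality holds for every Euclidean normal curvature of $U'$ along any unit tangent direction.

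For the Euclidean reduction I parametrize $U'$ as the lift $q=\psi(\bar q)=(\bar q,F(\bar q))$ of $U''$. A unit Euclidean tangent $\bar v\in T_{\bar q}U''$ lifts to $v=d\psi(\bar v)$ with $|v|^{2}=1+(\nabla F\cdot\bar v)^{2}$, and the constraints $\nu'_q\in T_q\pi$ and $\nu'_q\perp d\psi(T_{\bar q}U'')$ determine $\nu'_q$ uniquely as
$$
\nu'_q=a\Bigl(\bar\nu-\tfrac{\bar\nu\cdot\nabla F}{1+|\nabla F|^{2}}\nabla F,\ \tfrac{\bar\nu\cdot\nabla F}{1+|\nabla F|^{2}}\Bigr),\qquad a^{2}=\frac{1+|\nabla F|^{2}}{1+|\nabla F|^{2}-(\bar\nu\cdot\nabla F)^{2}},
$$
where $\bar\nu$ is the Euclidean unit normal of $U''$ at $\bar q$; a direct computation then produces the identity
$$
(\nu'_q\cdot e_n)^{2}+\frac{1}{1+|\nabla F|^{2}}=\frac{a^{2}}{1+|\nabla F|^{2}},
$$
which recasts the geometric factor in the statement as $(1+|\nabla F|^{2})^{3/2}/|a|^{3}$.

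Differentiating a curve $\alpha=\psi\circ\bar\alpha\subset U'$ twice and pairing $\ddot\alpha$ against $\nu'_q$, a short calculation using the explicit form of $\nu'_q$ above yields
$$
-\ddot\alpha\cdot\nu'_q=a\,\kappa''(\bar q,\bar v)-(\nu'_q\cdot e_n)\,\bar v^{\top}\mathrm{Hess}\,F(\bar q)\,\bar v.
$$
Since the left-hand side is bounded in absolute value by $|v|^{2}\max_i|\kappa^E_i|$, maximizing $|v|^{2}$ over unit $\bar v\in T_{\bar q}U''$ gives $1+|\nabla F|^{2}-(\bar\nu\cdot\nabla F)^{2}=[(\nu'_q\cdot e_n)^{2}+(1+|\nabla F|^{2})^{-1}]^{-1}$ by the identity above. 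Combining with the conformal bound from the first step produces the main term of the proposition. The main technical obstacle is to absorb the Hessian contribution $|\nu'_q\cdot e_n|\,|\mathrm{Hess}\,F(\bar q)|/|a|$ into the additive term $4|\nabla h|/h^{2}$: the Euclidean bending of the graph $\pi$ is controlled by $|\nabla h|/h$ through the conformal geometry of $(\Omega,g)$, and the factor~$4$ arises once all such constants are aggregated.
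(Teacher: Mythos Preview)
Your two–step factorization (conformal change then Euclidean projection) is a reasonable alternative to the paper's argument, and your algebraic identity
\[
(\nu'_q\cdot e_n)^2+\frac{1}{1+|\nabla F|^2}=\frac{a^2}{1+|\nabla F|^2}
\]
is correct and is exactly the geometric factor the paper produces (there it arises from the computation of $|X_1\wedge\dots\wedge X_{n-2}\wedge e_n|$ in the orthonormal splitting $T_qU'\oplus\langle\nu'_q\rangle\oplus\langle\mathsf N_q\rangle$). Likewise your conformal step and the relation $-\ddot\alpha\cdot\nu'_q=a\,\kappa''-(\nu'_q\cdot e_n)\,\mathrm{Hess}\,F(\bar v,\bar v)$ are computed correctly.

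The paper does \emph{not} pass through the Euclidean curvatures $\kappa^E_i$ of $U'$ at all. It parametrizes the lifted curve $\alpha\subset U'$ by $g$--arc length and uses the explicit Christoffel symbols $\Gamma^k_{ij}=\delta^k_i\partial_jf+\delta^k_j\partial_if-\delta_{ij}\partial_kf$ (with $f=\log h$) of the ambient conformal metric to write
\[
\nu'_q\cdot\ddot\alpha(0)=\frac{\kappa'_q(v)}{h(q)}-\frac{1}{h(q)}\Big(\text{terms linear in }\nabla h(q)\Big).
\]
The point is that the $g$--normal curvature $\kappa'_q(v)=g_q(N'_q,D_t\dot\alpha)$ can be evaluated with the ambient covariant derivative (since $N'_q$ is tangent to $\pi$, the normal component of $D_t\dot\alpha$ to $\pi$ drops out), and the ambient Christoffel symbols involve only first derivatives of $h$. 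This produces the $4|\nabla h|/h^2$ term directly, without any reference to $\mathrm{Hess}\,F$.

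This is precisely where your argument has a genuine gap. Your last paragraph asserts that the residual term $|\nu'_q\cdot e_n|\,|\mathrm{Hess}\,F|/|a|$ is controlled by $|\nabla h|/h^2$ ``through the conformal geometry of $(\Omega,g)$'', but that is false for a general graph $\pi$: take $h\equiv 1$ and $F$ any non-affine function, so $\nabla h=0$ while $\mathrm{Hess}\,F\neq 0$. What makes the absorption possible in the intended applications is that $\pi$ is \emph{totally geodesic} for $g$ (this is the standing hypothesis of the whole section); under that hypothesis the conformal change of the second fundamental form gives $II^{\mathrm{Eucl}}_\pi=h^{-1}(\mathsf N\cdot\nabla h)\,\langle\cdot,\cdot\rangle$, so the Euclidean bending of $\pi$, and hence $\mathrm{Hess}\,F$, is indeed bounded by $|\nabla h|/h$. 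You need to invoke this explicitly and carry out the estimate; as written, the final step is an unjustified claim rather than a proof.
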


\begin{proof}
If $X$ is a local positive oriented parametrization of $U'$, then $\bar X=X-(X\cdot e_{n})e_{n}$ is a local parametrization of $U''$, and we can orient $U''$ with
\begin{equation}\label{defnu2}
\nu''\circ \bar X:={\rm vers}(*(\bar X_1\wedge \bar X_2\wedge \dots \wedge \bar X_{n-2}\wedge e_{n}))\,,
\end{equation}
where $\bar X_k$ is the $k^{th}$ derivative of $\bar X$ with respect to the coordinates of its domain and $*$ is the Hodge star operator in $\RR^{n}$ with respect to the the Euclidean metric and the standard orientation. 

Now we prove inequalities \eqref{bound curvatures II generale}. Fix a point $q=(\bar q,q_{n})\in U'$ and $\bar v\in T_{\bar q}U'$ be nonzero.
Let $\beta \colon (-\delta,\delta)\to U''$ be an arbitrary regular curve contained in $U''$ such that
$$
\beta(0)=\bar q\,,\quad \dot\beta(0)=\bar v\,.
$$
Then
$$
\kappa''_{\bar q}(\bar v)=\frac{1}{|\bar v |^2}\nu''_{\bar q}\cdot \ddot \beta(0)
$$
is the normal curvature of $U''$ at $(\bar q,\bar v)$, viewed as hypersurface of $\{x_n=0\}$ with the Euclidean metric.
We can write
 $$
\kappa''_{\bar q}(\bar v)=\frac{1}{|\bar v |^2}\nu''_{\bar q}\cdot \ddot \alpha(0)
$$
where $\alpha=(\beta,\alpha_{n})$ whose projection onto $U'$ is $\beta$.
From
$$
\bar X_k=X_k-( X_k\cdot e_{n}) e_{n}\,,
$$
and the definition of $\nu''$ \eqref{defnu2} we have
$$
\kappa''_{\bar q}(\bar v)=\frac{(*(X_1( q)\wedge \dots \wedge X_{n-2}( q)\wedge e_{n}))\cdot \ddot\alpha(0)}{|\dot \beta(0)|^2 | X_1(q)\wedge \dots \wedge X_{n-2}(q)\wedge e_{n}|}\,.
$$
We may assume that $\{X_1(q),\dots,X_{n-2}(q)\}$ is an orthonormal basis of $T_{q}U'$ with respect to the Euclidean metric.  Let 
$$
{\sf N_q}=\frac{(-\nabla F(\bar q),1)}{\sqrt{1+|\nabla F(\bar q)|^2}}
$$
be the Euclidean normal vector to $\pi$ at $q$ and let 
$$
a=\sqrt{1+|\nabla F(\bar q)|^2}\,. 
$$

Therefore $\{X_1(q),\dots,X_{n-2}(q),\nu'_{q}, {\sf N}_q\}$ is an Euclidean orthonormal basis of $\RR^{n}$ and we can split
$\RR^{n}$ in
\begin{equation}\label{split}
\RR^{n}=T_{q}U'\oplus \langle \nu'_q\rangle\oplus \langle {\sf N}_q \rangle \,,
\end{equation}
and $e_{n}$ splits accordingly into
$$
e_{n}=e_{n}'+e_{n}''+e_{n}'''\,.
$$
Therefore
$$
*(X_1( q)\wedge \dots \wedge X_{n-2}( q)\wedge e_{n})\cdot\ddot \alpha(0)=*(X_1(q)\wedge \dots \wedge X_{n-2}( q)\wedge e_{n}''')\cdot \ddot \alpha(0)\,,
$$
i.e.
$$
*(X_1( q)\wedge \dots \wedge X_{n-2}( q)\wedge e_{n})\cdot\ddot \alpha(0)=\frac{1}{a}*\left(X_1( q)\wedge \dots \wedge X_{n-2}( q)\wedge{\sf N}_q\right)\cdot \ddot \alpha(0)\,.
$$
Since
$$
\nu_{q}'=*\left(X_1( q)\wedge \dots \wedge X_{n-2}(q)\wedge {\sf N}_q\right)
$$
we obtain
$$
\kappa''_{\bar q}(\bar v)= \frac{1 }{a |\dot \beta(0)|^2}\,\,\frac{\nu'_q\cdot\ddot\alpha(0)}{ |X_1( q)\wedge \dots \wedge X_{n-2}(q)\wedge e_{n}|}\,.
$$
We may assume that $\alpha$ is parametrized by arc length with respect to the metric $g$, i.e.
$$
|\dot \alpha|^2=h(\alpha)^{-2}
$$
and so
$$
|\dot \beta|^2=h(\alpha)^{-2}-\dot {\alpha} _{n}^2\,,
$$
which implies
\begin{equation} \label{braoAlberto}
\kappa''_{\bar q}(\bar v)= \frac{1}{a(h(q)^{-2}-v_{n}^2)}\,\,\frac{\nu'_q\cdot\ddot\alpha(0)}{ |X_1( q)\wedge \dots \wedge X_{n-2}(q)\wedge e_{n}|}\,.
\end{equation}
Since
$$
X_1(q)\wedge \dots \wedge X_{n-2}(q)\wedge e_{n}=X_1( q)\wedge \dots \wedge X_{n-2}(q)\wedge e''_{n}+X_1( q)\wedge \dots \wedge X_{n-2}(q)\wedge e_{n}'''
$$
and
\begin{eqnarray*}
&& X_1( q)\wedge \dots \wedge X_{n-2}(q)\wedge e''_{n}=(\nu_q'\cdot e_{n})\,X_1( q)\wedge \dots \wedge X_{n-2}(q)\wedge\nu_q'\,,\\
&& X_1( q)\wedge \dots \wedge X_{n-2}(q)\wedge e'''_{n}=\frac{1}{a}X_1(q)\wedge \dots \wedge X_{n-2}(q)\wedge {\sf N}_q\, \,,
\end{eqnarray*}
we obtain 
$$
|X_1(q)\wedge \dots \wedge X_{n-2}(q)\wedge e_{n}|=\left((\nu_q'\cdot e_{n})^2+\frac{1}{a^2}\right)^{1/2} \,.
$$
On the other hand
$$
\kappa_q'(v)=g_q(N'_q,D_t\dot \alpha_{|t=0})
$$
where $D_t$ is the covariant derivative in $\pi$. It is well-known that the Christoffel symbols of $g$ are given by
\begin{equation*}
\begin{aligned}
\Gamma_{ij}^k
& =\delta_i^k\partial_jf+\delta_j^k\partial_if-\delta_i^j\partial_kf \,,
\end{aligned}
\end{equation*}
where $f={\rm log}\,h$.
We have 
$$
\begin{aligned}
D_t\dot \alpha=&\,\ddot \alpha+\sum_{i,j,k=1}^{n} \Gamma_{ij}^k(\alpha)\dot \alpha_i\dot \alpha_j\,e_k=
\ddot \alpha+\sum_{i,j,k=1}^{n} (\delta_i^k\partial_jf(\alpha)+\delta_j^k\partial_if(\alpha)-\delta_i^j\partial_kf(\alpha))\dot \alpha_i\dot \alpha_j\,e_k\\
=&\, \ddot \alpha+\sum_{i,k=1}^{n}(2\partial_if(\alpha)\dot\alpha_i\dot \alpha_k-\dot\alpha_i^2\partial_kf(\alpha))\,e_k+\sum_{k=1}^n\partial_kf(\alpha)\dot\alpha_k^2\,e_k\\
\end{aligned}
$$
and 
$$
D_t\dot \alpha_{|t=0}=\ddot \alpha(0)+\sum_{i,k=1}^{n}(2\partial_if(q)v_iv_k-v_i^2\partial_kf(q))\,e_k+\sum_{k=1}^n\partial_kf(q) v_k^2\,e_k\,. 
$$
Therefore 
$$
\begin{aligned}
\kappa_q'(v)=&\,g_q\left(N'_q,\ddot \alpha(0)+\sum_{i,k=1}^{n}(2\partial_if(q)v_iv_k-v_i^2\partial_kf(q))\,e_k+\sum_{k=1}^n\partial_kf(q) v_k^2\,e_k\right)	\\
=&\,h(q)\nu_q'\cdot \ddot\alpha(0)+h(q)\sum_{i,k=1}^{n}(2\partial_if(q)v_iv_k-v_i^2\partial_kf(q))\,e_k\cdot \nu'_q+h(q)\sum_{k=1}^n\partial_kf(q) v_k^2 \nu_q'\cdot e_k\\
=&\,h(q)\nu_q'\cdot \ddot\alpha(0)+\sum_{i,k=1}^{n}(2\partial_ih(q)v_iv_k-v_i^2\partial_kh(q))\,e_k\cdot \nu'_q+\sum_{k=1}^n\partial_kh(q) v_k^2 \nu_q'\cdot e_k\, ,
\end{aligned}
$$
and we get  
$$
\nu_q'\cdot \ddot\alpha(0)=\frac{\kappa_q'(v)}{h(q)}-\frac{1}{h(q)}\sum_{i,k=1}^{n}(2\partial_ih(q)v_iv_k-v_i^2\partial_kh(q))\,e_k\cdot \nu'_q-\frac{1}{h(q)}\sum_{k=1}^n\partial_kh(q) v_k^2 \nu_q'\cdot e_k\,.
$$
From \eqref{braoAlberto} we deduce
\begin{multline*}
\kappa''_{\bar q}(\bar v)= \frac{1}{a(h(q)^{-2}-v_{n}^2)}\left((\nu_q'\cdot e_{n})^2+\frac{1}{a^2}\right)^{-1/2}\\
\left(\frac{\kappa_q'(v)}{h(q)}-\frac{1}{h(q)}\sum_{i,k=1}^{n}(2\partial_ih(q)v_iv_k-v_i^2\partial_kh(q))\,e_k\cdot \nu'_q-\frac{1}{h(q)}\sum_{k=1}^n\partial_kh(q) v_k^2 \nu_q'\cdot e_k\right)
\,,
\end{multline*}
for every $v\in T_qU'$, $g_q(v,v)=1$. Therefore
\begin{eqnarray*}
&& \kappa''_1(\bar q)=\frac{1}{ah(q)}\left((\nu_q'\cdot e_{n})^2+\frac{1}{a^2}\right)^{-1/2}
\inf_{v\in\mathbb S^{n-2}_q} A_q(v)\,,\\
&&\kappa''_{n-2}(\bar q)=\frac{1}{ah(q)}\left((\nu_q'\cdot e_{n})^2+\frac{1}{a^2}\right)^{-1/2}
\sup_{v\in\mathbb S^{n-2}_q}A_q(v)\,,
\end{eqnarray*}
where 
$$
A_{q}(v)=\frac{1}{h(q)^{-2}-v_{n}^2}\left(\kappa_q'(v)-\sum_{i,k=1}^{n}(2\partial_ih(q)v_iv_k-v_i^2\partial_kh(q))\,e_k\cdot \nu'_q-\sum_{k=1}^n\partial_kh(q) v_k^2 \nu_q'\cdot e_k\right)
$$
and $\mathbb{S}^{n-2}_q=\{v\in T_qU'\,\,:\,\, |v|_q=1\}$. Since $|v|_q^2=1$, then $|v|^2=h(q)^{-2}$ and we can rewrite $A_{q}(v)$ as 
$$
\begin{aligned}
A_{q}(v)=&\frac{1}{h(q)^{-2}-v_{n}^2}\left(\kappa_q'(v)-\nu_q'\cdot \left(2(\nabla h(q)\cdot v)v-h(q)^{-2}\nabla h(q)+\sum_{k=1}^n\partial_kh(q) v_k^2 e_k\right)\right)\\
=&\frac{1}{h(q)^{-2}-v_{n}^2}\left(\kappa_q'(v)-\nu_q'\cdot \left(2(\nabla h(q)\cdot v)v-h(q)^{-2}\nabla h(q)+\sum_{k=1}^n\partial_kh(q) v_k^2  e_k\right)\right) \,.
\end{aligned}
$$

Since $|\kappa_i''(\bar q)| \leq \max \{ |\kappa_1'' (\bar q)|, |\kappa_{n-2}'' (\bar q)| \}$, $i=1,\ldots,n-2$, we obtain
\begin{equation} \label{lasus}
|\kappa_i''(\bar q)| \leq\frac{1}{ah(q)}\left((\nu_q'\cdot e_{n})^2+\frac{1}{a^2}\right)^{-1/2}
\sup_{v\in\mathbb S^{n-2}_q}|A_q(v)|\,.
\end{equation}
We have
$$
\begin{aligned}
|A_{q}(v)|&\,=\frac{1}{h(q)^{-2}-v_{n}^2}\left|\kappa_q'(v)+\nu'_q\cdot\left(2(\nabla h(q)\cdot v)\, v-\frac{1}{h(q)^2}\nabla h(q)+\sum_{k=1}^n\partial_kh(q) v_k^2  e_k\right)\right|\\
&\,\leq \frac{1}{h(q)^{-2}-v_{n}^2}\left(|\kappa_q'(v)|+\left|2(\nabla h(q)\cdot v)\, v-\frac{1}{h(q)^2}\nabla h(q)+\sum_{k=1}^n\partial_kh(q) v_k^2  e_k\right| \right)\\
&\,\leq \frac{1}{h(q)^{-2}-v_{n}^2}\left(|\kappa_q'(v)|+2\left|(\nabla h(q)\cdot v)\, v\right|+\frac{1}{h(q)^2}\left|\nabla h(q)\right|+\left|\sum_{k=1}^n\partial_kh(q) v_k^2  e_k\right| \right)\\
&\,\leq \frac{1}{h(q)^{-2}-v_{n}^2}\left(|\kappa_q'(v)|+\frac{2}{h(q)^2}\left|\nabla h(q)\right|+\frac{1}{h(q)^2}\left|\nabla h(q)\right|+ \frac{1}{h(q)^2}\left|\nabla h(q)\right|\right)
\end{aligned}
$$
i.e.,
$$
|A_{q}(v)|\leq \frac{1}{h(q)^{-2}-v_{n}^2}\left(|\kappa_q'(v)|+\frac{4}{h(q)^2}\left|\nabla h(q)\right|\right) \,.
$$
Since $\RR^{n}=T_qU'   \oplus \langle {\sf N}_q \rangle \oplus  \langle \nu_q' \rangle $, we can write 
$$
e_n=e_n''+ \frac{1}{a}+\nu_q\cdot e_n\,\nu'_q\,,
$$
where $\tilde e_n$ is the orthogonal projection of $e_n$ onto $T_qU'$. Therefore 
$$
1-|e_n'|^2=\frac{1}{a^2}+(\nu'_q\cdot e_n)^2\,.
$$
Since $h(q)v$ lies in $T_qU'$ and it has unitary Euclidean norm, we have 
$$
|e_n'|^2\geq h(q)^2(e_n\cdot v)^2=h(q)^2v_n^2
$$  
and so 
$$
1-h(q)^2v_n^2\geq \frac{1}{a^2}+(\nu'_q\cdot e_n)^2\,,
$$
i.e.
$$
h(q)^{-2}-v_n^2\geq \left(\frac{1}{a^2}+(\nu'_q\cdot e_n)^2\right)h(q)^{-2}\,.
$$
Hence 
$$
|A_q(v)|\leq (|\kappa_q'(v)|+4h(q)^{-2}|\nabla h(q)|)\left(\frac{1}{a^2}+(\nu'_q\cdot e_n)^2\right)^{-1}h(q)^{2} \,,
$$ 
which yields 
\begin{equation} \label{lasus_II}
|\kappa_i''(\bar q)| \leq  \frac{h(q)}{a}\left((\nu_q'\cdot e_{n})^2+\frac{1}{a^2}\right)^{-3/2}
\sup_{v\in\mathbb S^{n-2}_q}(|\kappa_q'(v)|+4h(q)^{-2}|\nabla h(q)|) \,,
\end{equation}
which implies \eqref{bound curvatures II generale}.
\end{proof}

Now we use \eqref{bound curvatures II generale} in space forms.

\noindent In the {\em Euclidean space} we have $\Omega=\RR^n$ and  $h(q)=1$ and \eqref{bound curvatures II generale} reduces to 
$$
|\kappa_i''(\bar q)| \leq  \frac{1}{\sqrt{1+|\nabla F(\bar q)|^2}}\left((\nu_q'\cdot e_{n})^2+\frac{1}{1+|\nabla F(\bar q)|^2}\right)^{-3/2}
\max \{|\kappa_1'(q)|,|\kappa_{n-1}'(q)|\} \,,
$$
which was already found in  \cite[Proposition 2.8]{JEMS} when $\pi$ is a hyperplane, $\omega_1=\frac{(\nabla F(\bar q),-1)}{\sqrt{1+|F(\bar q)|^2}}$ and $\omega_2=e_n$.

\noindent In the {\em Hyperbolic space} we have $\Omega=\{q_n>0\}$ and $h(q)=\frac{1}{q_n}$ and \eqref{bound curvatures II generale} reduces to  
$$
|\kappa_i''(\bar q)| \leq  \frac{1}{q_n\sqrt{1+|\nabla F(\bar q)|^2}}\left((\nu_q'\cdot e_{n})^2+\frac{1}{1+|\nabla F(\bar q)|^2}\right)^{-3/2}
\left(\max \{|\kappa_1'(q)|,|\kappa_{n-1}'(q)|\}+4\right) \,,
$$
(see also \cite[Proposition 4.3]{INDIANA}).

Now we focus $\RR^n$ equipped with the spherical metric. In this case $h(q)=\frac{2}{1+|q|^2}$ and  \eqref{bound curvatures II generale} gives
$$
|\kappa_i''(\bar q)| \leq  \frac{2}{(1+|q|^2)\sqrt{1+|\nabla F(\bar q)|^2}}\left((\nu_q'\cdot e_{n})^2+\frac{1}{1+|\nabla F(\bar q)|^2}\right)^{-3/2}
\left(\max \{|\kappa_1'(q)|,|\kappa_{n-1}'(q)|\}+4|q|\right)\,.
$$
In particular if $\pi$ is the hemisphere of some hyperplane which does not contain the origin, then we have  
\begin{equation}\label{bound curvatures II}
|\kappa_i''(\bar q)| \leq  \frac{h(q) |(q-O_{\pi})_n|}{R}\left((\nu_q'\cdot e_{n})^2+\frac{(q-O_{\pi})_n^2}{R^2}\right)^{-3/2}
\left(\max\{|\kappa_1'(q)|,|\kappa_{n-1}'(q)|\}+4|q|\right),
\end{equation}
for every $i=1,\dots,n-2$,
where $O_\pi$ and $R$ are the center and the radius of $\pi$, respectively.

\section{Approximate symmetry in one direction} \label{sect5}
We consider the following set-up: let $S=\partial \Omega$ be a $C^2$-regular connected closed hypersurface embedded in $\mathbb M^n_+$, where $\Omega$ is a bounded domain.
 Assume that $S$ satisfies a uniform touching ball condition of radius $\rho>0$. 
We fix a direction $v$ in $T_{\origin}\mathbb M^n$ and  we apply the method of the moving planes  as described in section \ref{moving}. Let
$\pi=\pi_{v,m_{v}}$ be the critical hyperplane and in order to simplify the notation we set 
$$
\begin{aligned}
&S_+=\{p\in S\,\,:\,\, p\in \pi_{v,t} \mbox{ for some }t>m_v\}\,,\\
&S_-=\{p\in S\,\,:\,\, p\in \pi_{v,t} \mbox{ for some }t<m_v\}\,. 
\end{aligned}
$$
From the method of the moving planes we have that the reflection $S_+^\pi$ of $S_+$ with respect to $\pi$ is contained in $\Omega$ and it is tangent to $S_-$ at a point $p_0$ (internally or at the boundary). 
Let $\Sigma$ and $\hat \Sigma$ be the connected 
components of $S_+^\pi$ and $S_-$ containing $p_0$, respectively.  

The main result in this section is the following 

\begin{theorem}\label{thm approx symmetry 1 direction}
There exists $\ep>0$ such that if
$$
{\rm osc}({\sf H}_S) \leq \ep,
$$
then for any $p \in\Sigma$ there exists $\hat p\in \hat\Sigma$ such that
\begin{equation*}
d(p,\hat p) +  |N_p-\tau_{\hat p}^p (N_{\hat p})|_p \leq C\, {\rm osc}({\sf H}_S)  . 
\end{equation*}
Here, the constants $\ep$ and $C$ depend only on $n$, $\rho$ and the area of $S$. In particular $\ep$ and $C$ do not depend on the direction $v$. 

Moreover, $\Omega$ is contained in a neighborhood of radius $C{\rm osc}({\sf H}_S)$ of $\Sigma \cup \Sigma^\pi$ $($ $\Sigma^\pi$ is the reflection of $\Sigma$ about $\pi$ $)$, i.e.
$$
d(p,\Sigma \cup \Sigma^\pi) \leq C{\rm osc}({\sf H}_S)\,,
$$
for every $p \in \Omega$.
\end{theorem}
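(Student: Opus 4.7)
The plan is a quantitative version of the moving-planes proof of Alexandrov Theorem~II, in which the qualitative identity $\mathcal L(u-\hat u)=0$ is replaced by $\mathcal L(u-\hat u)=g$ with $\|g\|_\infty\leq {\osc}({\sf H}_S)$. The argument will proceed in three stages: a \emph{seed estimate} in a neighborhood of the tangency point $p_0$, \emph{propagation} along $\Sigma$ via a chain of overlapping charts, and finally the enclosure of $\Omega$ between $\Sigma$ and $\Sigma^\pi$. First I would freeze $p_0$: applying $\varphi_{p_0}$, both $\Sigma$ and $\hat\Sigma$ become Euclidean graphs $u\geq \hat u$ over a ball $B_{\alpha\rho_1}\subset\{x_n=0\}$ with $u(O)=\hat u(O)$ and $\nabla u(O)=\nabla \hat u(O)=0$. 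The fully-nonlinear operator $L$ associated with ${\sf H}_S$ is uniformly elliptic (by the concavity of $f$, cf.~\cite{korevar}), and Lemma~\ref{Lemma1} supplies $C^1$-bounds on $u,\hat u$ depending only on $\rho$; hence $w:=u-\hat u\geq 0$ satisfies a linear uniformly elliptic equation
$$
\mathcal L w \;=\; Lu - L\hat u \;=\; {\sf H}_S(\cdot,u)-{\sf H}_S(\cdot,\hat u)\;=\; g,\qquad \|g\|_\infty\leq {\osc}({\sf H}_S),
$$
whose structure constants depend only on $n$ and $\rho$.

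For the seed estimate, when $p_0$ is interior to $S_+^\pi$ the Krylov--Safonov interior Harnack inequality yields $\sup_{B_{\alpha\rho_1/2}} w\leq C\,{\osc}({\sf H}_S)$; when instead $p_0\in\pi$ is a boundary tangency, the quantitative Hopf lemma on the half-ball $B_{\alpha\rho_1}^+$ controls both $w$ and $|\nabla w|$ at $O$ by $C\,{\osc}({\sf H}_S)$. Lemmas~\ref{lem_p_pstar1} and \ref{lem_p_pstar2} then convert the Euclidean graph-closeness of $u$ and $\hat u$ into the intrinsic estimate $d(q,\hat q)+|N_q-\tau_{\hat q}^q(N_{\hat q})|_q\leq C\,{\osc}({\sf H}_S)$ for every $q\in\Sigma$ close to $p_0$; in the Euclidean case this conversion is immediate.

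The seed estimate is then propagated to an arbitrary $p\in\Sigma$ through a finite chain $p_0,p_1,\ldots,p_N=p$ of points of $\Sigma$ with $p_{i+1}\in\mathcal U_{\alpha\rho_1/2}(p_i)$, chosen along a path joining $p_0$ to $p$ in $\Sigma$. At step $i$, Lemma~\ref{lem_p_pstar1} or Lemma~\ref{lem_p_pstar2} produces a companion $\hat p_{i+1}\in\hat\Sigma$ on the geodesic normal to $\Sigma$ through $p_{i+1}$, and a new Harnack step transports the bound from $p_i$ to $p_{i+1}$ with a fixed multiplicative constant. The \textbf{main obstacle} is to bound the number $N$ of iterations only in terms of $n$, $\rho$ and $\mathrm{Area}(S)$, independently of $v$: this is achieved through the area lower bound \eqref{area} of Corollary~\ref{corollary_trascendental}, which upgrades the area upper bound on $S$ into a uniform bound on the intrinsic diameter of $\Sigma$ and hence on the number of charts needed. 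Tracking the constants carefully through the $N$ iterations keeps the final multiplicative factor bounded independently of the direction $v$.

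For the enclosure statement, the already-proved approximate symmetry makes $\Sigma$ Hausdorff $C\,{\osc}({\sf H}_S)$-close to $\hat\Sigma\subset S_-$; reflecting through $\pi$, $\Sigma^\pi\subset S_+$ is likewise close to the reflected part of $S_-$. Consequently $S=S_+\cup S_-$ lies in a $C\,{\osc}({\sf H}_S)$ tubular neighborhood of $\Sigma\cup\Sigma^\pi$, and since $\Omega$ is bounded by $S$ with uniform interior touching-ball radius $\rho$, every $p\in\Omega$ must be at distance at most $C\,{\osc}({\sf H}_S)$ from $\Sigma\cup\Sigma^\pi$.
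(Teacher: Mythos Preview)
Your overall architecture (seed estimate at $p_0$, Harnack-based propagation along a chain of charts, then enclosure) matches the paper's, and your use of Corollary~\ref{corollary_trascendental} to bound the number of chain steps is exactly right. However, there is a substantive gap in the propagation step.

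The chain-of-charts argument only reaches points $p$ with $d_\Sigma(p,\partial\Sigma)\geq\delta$ for some fixed $\delta=\delta(n,\rho)$: each chart $\mathcal U_{r_0}(p_i)$ must lie inside $\Sigma$ for the graphs $u,\hat u$ (and hence the Harnack inequality for $w=u-\hat u\geq 0$) to be available, and the bound on the chain length in Lemma~\ref{catenadipalle} requires the path to stay in $\Sigma_{\delta/2}$. Your proposal does not explain how to reach points with $d_\Sigma(p,\partial\Sigma)<\delta$. In the paper this is Case~2 of the proof: one needs a \emph{boundary} (Carleson-type) Harnack estimate, namely \cite[Theorem~1.3]{BCN}, applied on the domain $E_r$ whose boundary contains the projection $U''$ of $\mathcal U_r(q)\cap\pi$. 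To invoke that theorem one must control the regularity of $U''$, and this is precisely why the paper develops all of Section~\ref{subsect Luigi}: Propositions~\ref{prop Luigi I} and~\ref{prop Luigi II} give a uniform bound on the principal curvatures of $U''$ in terms of $\rho$ (see \eqref{lucionediavolo}), which in turn requires the transversality estimate $0\leq g_q(N_q,\omega_q)\leq\tfrac14$ on $\partial\Sigma$ (Lemma~\ref{lemma connected II}). None of this machinery is accounted for in your outline, and without it the estimate simply does not propagate to all of $\Sigma$.

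A secondary issue is the enclosure step. Your argument passes from ``$\Sigma$ is close to $\hat\Sigma$'' to ``$S$ lies in a tube around $\Sigma\cup\Sigma^\pi$'', but $\hat\Sigma$ is only \emph{one} connected component of $S_-$; the first part of the theorem gives no direct information about other components of $S_-$ or $S_+$. The paper instead argues by contradiction using the connectedness of $\Omega$: if some $x\in\Omega$ were far from $\Sigma\cup\Sigma^\pi$, one could slide along $\Omega$ to a point $y$ at a prescribed intermediate distance, and then the exterior touching ball at the nearest point of $\Sigma\cup\Sigma^\pi$ (together with the already-established pointwise bound on $d(p,\hat p)$ and on the normals) forces $y\notin\Omega$.
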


\medskip 
Before giving the proof of theorem \ref{thm approx symmetry 1 direction}, we provide two preliminary results
about the geometry of $\Sigma$. For $t> 0$ we set
$$
\Sigma_t = \{p \in \Sigma:\ d_\Sigma(p,\pa \Sigma) >  t \}\,.
$$
The following lemmas quantitatively show that $\Sigma_t$ is connected for $t$ small enough.

Here we use the results in section \eqref{subsect Luigi} and we consider the unitary normal vector field $\omega$ to $\pi$ directed as the geodesic $\gamma$ in 
$\mathbb M_+^n$ satisfying $\dot \gamma(0)=v$. 
\begin{lemma} \label{lemma connected}
Assume 
\begin{equation}\label{trasversale}
g_p(N_p, \omega_p)\leq \mu
\end{equation}
for every $p$ on the boundary of $\Sigma$, for some $\mu\leq 1/2$, and let $t_0=\rho\sqrt{1-\mu^2}$. 
Then $\Sigma_t$ is connected for any $0 < t < t_0$. 
\end{lemma}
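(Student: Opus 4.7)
The plan is to show that the complement $\Sigma \setminus \Sigma_t = \{p \in \Sigma : d_\Sigma(p, \partial\Sigma) \leq t\}$ is a topological collar of $\partial\Sigma$ inside $\Sigma$ for every $0 < t < t_0$. Since $\Sigma$ is connected by construction, and removing a closed collar of its boundary from a connected manifold with boundary preserves connectedness, $\Sigma_t$ will then be connected.

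I would construct this collar via the inward normal exponential map of $\partial\Sigma$ inside $\Sigma$. For each $q \in \partial\Sigma$ let $\xi_q \in T_q\Sigma$ be the $g$-unit vector orthogonal to $T_q(\partial\Sigma)$ that points into $\Sigma$, and set
\[
\Phi \colon \partial\Sigma \times [0, t_0) \to \Sigma, \qquad \Phi(q, s) = \exp_q^\Sigma(s\xi_q).
\]
The explicit value $t_0 = \rho\sqrt{1-\mu^2}$ is forced by the following identity on $\partial\Sigma$. Since $T_q(\partial\Sigma) = T_q\Sigma \cap T_q\pi$, each of $N_q$, $\xi_q$ and $\omega_q$ lies in the two-dimensional $g$-orthogonal complement of $T_q(\partial\Sigma)$; moreover $N_q \perp_g \xi_q$ and both are $g$-unit, so $\{N_q, \xi_q\}$ is a $g$-orthonormal basis of this plane. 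Hence
\[
1 = |\omega_q|_q^2 = g_q(\omega_q, N_q)^2 + g_q(\omega_q, \xi_q)^2,
\]
which together with \eqref{trasversale} gives $|g_q(\omega_q, \xi_q)| \geq \sqrt{1-\mu^2}$. Thus the curve $\gamma_q(s) := \Phi(q, s)$ departs the hyperplane $\pi$ in the $\omega$-direction at initial rate at least $\sqrt{1-\mu^2}$.

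The main step is to prove that $\Phi$ is a homeomorphism from $\partial\Sigma \times [0, t_0)$ onto $\Sigma \setminus \Sigma_{t_0}$, which reduces to (a) $\gamma_q(s) \notin \pi$ for $0 < s < t_0$, so that $\gamma_q$ cannot return to $\partial\Sigma \subset \pi$, and (b) the absence of focal points of $\partial\Sigma$ along $\gamma_q$ before time $t_0$, which guarantees that the shortest path from any point of $\Sigma \setminus \Sigma_{t_0}$ to $\partial\Sigma$ is unique. For (a), the touching ball condition of radius $\rho$ forces the ambient curvature of the $\Sigma$-geodesic $\gamma_q$ to be at most $1/\rho$, so the ambient signed distance $f(s)$ of $\gamma_q(s)$ from $\pi$ satisfies $f(0)=0$, $f'(0) \geq \sqrt{1-\mu^2}$ and $|f''(s)| \leq 1/\rho$; integrating, $f'(s) > 0$ for $s < \rho\sqrt{1-\mu^2} = t_0$, so $\gamma_q$ stays strictly on the $\Sigma$-side of $\pi$ throughout this interval. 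For (b), proposition \ref{prop Luigi I} together with $|\kappa_i| \leq 1/\rho$ bounds the second fundamental form of $\partial\Sigma$ inside $\Sigma$, and a standard Jacobi-field comparison then yields the required focal radius estimate.

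The main obstacle is turning step (b) into a quantitative no-focal-point statement that is sharp enough to match the explicit constant $t_0 = \rho\sqrt{1-\mu^2}$, rather than some generic threshold of the form $c\rho$; step (a) is a direct one-dimensional ODE comparison and carries the geometry that fixes the value of $t_0$. Once (a) and (b) are established, the image $\Phi(\partial\Sigma \times [0, t])$ coincides with $\Sigma \setminus \Sigma_t$ for every $t < t_0$, which produces the desired collar and hence the connectedness of $\Sigma_t$.
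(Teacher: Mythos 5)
Your plan (an intrinsic collar of $\partial\Sigma$ inside $\Sigma$ built from the normal exponential map, controlled by a focal-radius estimate) is genuinely different from the paper's argument, which never works intrinsically in $\Sigma$: the paper projects $\Sigma$ onto the critical hyperplane $\pi$, uses \eqref{bound curv I'} to show that $\partial\Sigma$, viewed as a hypersurface of $\pi$, satisfies a touching ball condition of radius at least $\rho\sqrt{1-\mu^2}$, builds the collar inside $\pi$, and then lifts it to $\Sigma$ via the moving-planes fact that every point on the geodesic joining a point of the cap to its projection onto $\pi$ lies in $\overline\Omega$. That last containment is the global input which your purely local curvature argument does not reproduce, and its absence shows up as concrete gaps below.

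First, even granting step (b), the claim that $\Phi$ is a homeomorphism from $\partial\Sigma\times[0,t_0)$ onto the tube $\{\,p: d_\Sigma(p,\partial\Sigma)<t_0\}$ requires controlling the \emph{cut locus} of $\partial\Sigma$ in $\Sigma$, not only its focal locus: two normal geodesics issued from far-apart points of $\partial\Sigma$ can intersect before time $t_0$ with no focal point in sight, destroying injectivity of $\Phi$ and the product structure of the level sets of $d_\Sigma(\cdot,\partial\Sigma)$. Neither (a) nor (b) excludes this. Second, step (a) is correct verbatim only in the Euclidean model: in $\HH^n$ and $\Sbb^n_+$ the signed distance $f$ to the totally geodesic hypersurface $\pi$ has nonvanishing ambient Hessian (for instance $\nabla^2 f=\tanh(f)\,(g-df\otimes df)$ in the hyperbolic case), so along a $\Sigma$-geodesic one has $f''=\nabla^2f(\dot\gamma,\dot\gamma)+df(\ddot\gamma)$ and the clean bound $|f''|\le 1/\rho$, hence the threshold $\rho\sqrt{1-\mu^2}$, is lost. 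Third, the focal-radius step you defer is not a routine comparison with the stated constant: the focal radius of $\partial\Sigma$ in $\Sigma$ is governed by the intrinsic sectional curvature of $\Sigma$ (by Gauss' equation only bounded by a quantity of order $\rho^{-2}$ plus the ambient curvature) together with the bound \eqref{milan} on the second fundamental form of $\partial\Sigma$ in $\Sigma$, and the resulting threshold is a different function of $\rho$ and $\mu$ than $\rho\sqrt{1-\mu^2}$. All three points would need repair; the paper's projection argument bypasses them simultaneously, which is why it is the shorter path.
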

\begin{proof} 
We can work in $\RR^n$ for every space form considered, and we may assume that $\pi$ is an Euclidean 
hyperplane of $\RR^n$ (in the spherical case we can consider the projection from a point antipodal to a point inside $\pi$).

Let $\Sigma'$ be the subset of $\pi$ obtained by projecting $\Sigma$ onto $\pi$ (for any point $p \in \Sigma$ we define the projection of $p$ onto $\pi$ as the point on $\pi$ which realizes the distance $d$ of $p$ from $\pi$). $\Sigma'$ is an open set  of $\pi$ with $\partial \Sigma'=\partial \Sigma$. 
Proposition \ref{prop Luigi I} gives 
$$
|\kappa'_i(p)|\leq \frac{1}{ \sqrt{1-\left(g_p(N_p, \omega_p)\right)^2}}\max \{|\kappa_1(p)|,|\kappa_{n-1}(p)|\} \,,
$$
for any $p\in \partial \Sigma$ and $i=1,\ldots,n-1$, where $\kappa'_i$ are the principal curvatures of $\partial \Sigma$ viewed as a hypersurface of $\pi$. 
Since $S$ satisfies a touching ball condition of radius $\rho$, we have 
$$
\max \{|\kappa_1(p)|,|\kappa_{n-1}(p)|\}\leq \frac{1}{\rho}
$$
and, consequently, 
\begin{equation} \label{pulizie}
|\kappa'_i(p)|\leq \frac{1}{ \rho \sqrt{1-\left(g_p(N_p, \omega_p)\right)^2}} \,,
\end{equation}
for $i=1,\ldots,n-1$. From \eqref{trasversale} and \eqref{pulizie} we have that $\partial \Sigma'$ satisfies a touching ball condition of radius 
$$
\rho'\geq \rho\sqrt{1-\left(g_p(N_p, \omega_p)\right)^2} \geq t_0\,.
$$ 
Therefore if $s<t_0$,
$$
\mathcal{C}_s = \{ z \in \pi :\ d(z, \partial \Sigma) < s \}
$$
is a collar neighborhood of $\partial \Sigma$ in $\Sigma'$ of radius $s$. Since $\pi$ is a critical hyperplane in the method of moving planes, if $p$ belongs to the maximal cap $
S_{+}$ then any point on 
the geodesic path connecting $p$ to its projection onto $\pi$ is contained in the closure of $\Omega$.
It follows that the preimage of $\mathcal{C}_s$ via the projection contains a collar neighborhood of $\partial \Sigma$ of radius $s$ in $\Sigma$. This implies that $\Sigma$ can 
be retracted in $\Sigma_t$ for any $t\leq s$ which completes the proof.
\end{proof}

\begin{lemma} \label{lemma connected II}
There exists $\bar \delta>0$ depending only on $\rho$ with the following property. 
Assume that there exists a connected component $\Gamma_\de$ of $\Sigma_\de$, for some $0<\de \leq \bar \delta$, such that one of the following two assumptions is fulfilled:  
\begin{enumerate}
\item[$i)$] $0 \leq g_q(N_q, \omega_q) \leq \frac{1}{8}$  for any $q \in \partial \Gamma_\de$, 

\vspace{0.1cm}
\item[$ii)$] for any $q\in \partial \Gamma_\delta$ there exists $\hat q \in \hat \Sigma$ such that 
$$
d(q,\hat q)+|N_q-\tau_{\hat q}^q(N_{\hat q})|_q \leq \delta\,.
$$
\end{enumerate}
Then 
\begin{equation}  \label{bellachegira}
0\leq g_q(N_q, \omega_q)\leq \frac14
\end{equation}
for any $q\in \partial \Sigma$ 
and $\Sigma_\delta$ is connected. 
\end{lemma}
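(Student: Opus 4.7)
The scheme is to propagate the hypothesis at $\partial\Gamma_\delta$ to the full boundary $\partial\Sigma$, and then invoke Lemma \ref{lemma connected} with $\mu=1/4$ (so $t_0=\rho\sqrt{15}/4$); provided $\bar\delta<\rho\sqrt{15}/4$, this forces $\Sigma_\delta$ to be connected, hence equal to $\Gamma_\delta$. So the entire question reduces to proving the estimate $0\le g_q(N_q,\omega_q)\le 1/4$ at every $q\in\partial\Sigma$.

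I would first set up the collar. The touching ball condition on $S$ combined with Proposition \ref{prop Luigi I} gives a uniform curvature bound on $\partial\Sigma\subset\pi$, so for $\bar\delta$ small enough depending only on $\rho$ there is a well-defined $\delta$-tubular neighborhood of $\partial\Sigma$ in $\Sigma$, foliated by the inward geodesic normal exponential from $\partial\Sigma$. In particular every $q^*\in\partial\Sigma$ admits a unique companion $q=q(q^*)$ at intrinsic distance $\delta$, and the touching ball condition rules out pinching of $\Sigma_\delta$ at scale $\delta$, so $q(q^*)\in\partial\Gamma_\delta$. Along the geodesic of length $\delta$ joining $q$ and $q^*$, Proposition \ref{fejahyp2} and the parallel character of $\omega$ along the totally geodesic $\pi$ yield
\begin{equation*}
\bigl|g_{q^*}(N_{q^*},\omega_{q^*})-g_q(N_q,\omega_q)\bigr|\le C(\rho,n)\,\delta.
\end{equation*}

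Under hypothesis $(i)$ this immediately gives $g_{q^*}(N_{q^*},\omega_{q^*})\le 1/8+C\delta\le 1/4$ for $\bar\delta$ small. The non\-negativity $g_{q^*}(N_{q^*},\omega_{q^*})\ge 0$ is then forced by the moving planes geometry: the reflected cap $\Sigma=S_+^\pi$ sits inside $\overline\Omega$ on the $-\omega$ side of $\pi$, so the inward pointing normal to $\Sigma$ (with respect to the orientation reflected from $S$) at a boundary point has non\-negative $\omega$-component. Under hypothesis $(ii)$ I would reduce to case $(i)$. For $q\in\partial\Gamma_\delta$ with companion $\hat q\in\hat\Sigma$ given by the hypothesis, the collar geometry yields $d(q,\pi)\le C\delta$, hence $d(\hat q,\pi)\le C\delta$ by the triangle inequality. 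Since $\hat\Sigma$ enjoys the same touching ball condition, Proposition \ref{prop Luigi I} applied to the $\pi$-section of $\hat\Sigma$ near the nearest point of $\partial\hat\Sigma$ forces $g_{\hat q}(N_{\hat q},\omega_{\hat q})\le C\delta$; combining with $|N_q-\tau_{\hat q}^q N_{\hat q}|_q\le\delta$ yields $g_q(N_q,\omega_q)\le C\delta\le 1/8$, which is exactly case $(i)$.

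The main obstacle is the reduction $(ii)\Rightarrow(i)$: the hypothesis $(ii)$ provides only ambient proximity to $\hat\Sigma$ and has no direct content on the angle $g(N,\omega)$, so one has to use the two-sided touching ball condition on $\hat\Sigma$ to prevent it from bending sharply away from $\pi$ within an $O(\delta)$ neighborhood. Once the reduction is done, the remainder is a routine combination of the transport estimate coming from Proposition \ref{fejahyp2} with the collar estimate of Lemma \ref{lemma connected}.
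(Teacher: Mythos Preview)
Your reduction of case $(ii)$ to case $(i)$ has a genuine gap. You assert that $d(\hat q,\pi)\le C\delta$ together with the touching ball condition and Proposition~\ref{prop Luigi I} forces $g_{\hat q}(N_{\hat q},\omega_{\hat q})\le C\delta$, but Proposition~\ref{prop Luigi I} only bounds the curvatures of a $\pi$-section in terms of $g(N,\omega)$, not the reverse. Proximity of $\hat q$ to $\pi$ gives no control on the angle: a piece of $\hat\Sigma$ nearly tangent to $\pi$ produces points arbitrarily close to $\pi$ with $g(N,\omega)$ close to $1$. Nor does the hypothesis $|N_q-\tau_{\hat q}^q(N_{\hat q})|_q\le\delta$ help, since it only says $g_{\hat q}(N_{\hat q},\omega_{\hat q})$ and $g_q(N_q,\omega_q)$ differ by $O(\delta)$, and the latter is exactly what you are trying to bound. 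The paper supplies the missing idea via a reflection identity: for $q\in\Sigma=S_+^\pi$ let $q^\pi\in S_+\subset S$ be its reflection about $\pi$; since reflection flips the $\omega$-component of the normal, $g_q(N_q,\omega_q)=-g_q(\tau_{q^\pi}^q(N_{q^\pi}),\omega_q)$, and hence $2g_q(N_q,\omega_q)=g_q(N_q-\tau_{q^\pi}^q(N_{q^\pi}),\omega_q)$. Now $q^\pi$ and $\hat q$ both lie on $S$ with $d(q^\pi,\hat q)\le 3\delta$, so Proposition~\ref{fejahyp2} bounds $|N_{\hat q}-\tau_{q^\pi}^{\hat q}(N_{q^\pi})|$ by $C\delta$; inserting $\hat q$ as an intermediate point and using hypothesis $(ii)$ then gives $g_q(N_q,\omega_q)\le C\delta\le 1/8$.

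Your treatment of case $(i)$ is also circular. You invoke Proposition~\ref{prop Luigi I} to get a uniform curvature bound on $\partial\Sigma$ and hence a $\delta$-collar, but that bound carries the factor $(1-g(N,\omega)^2)^{-1/2}$ evaluated on $\partial\Sigma$, which is finite only under the very estimate \eqref{bellachegira} you are proving. Worse, even granting the collar, the claim ``$q(q^*)\in\partial\Gamma_\delta$'' presupposes that $\Sigma_\delta$ has a single component; ``no pinching'' from the touching ball condition does not rule out other components of $\Sigma_\delta$ bordering the collar. The paper avoids both issues by arguing in the opposite direction: starting from $\partial\Gamma_\delta$, where hypothesis $(i)$ is available, the touching balls of $S^\pi$ at those points are oriented nearly perpendicular to $\pi$ and, together with $\pi$ itself, enclose $\Sigma\setminus\Gamma_\delta$; this forces every $p\in\Sigma\setminus\Gamma_\delta$ (in particular every $p\in\partial\Sigma$) to lie within intrinsic distance $\delta$ of some $q\in\partial\Gamma_\delta$, after which the transport estimate of Proposition~\ref{fejahyp2} applies as you indicate.
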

	
\begin{proof}
\emph{Case $i)$.}
The crucial observation is that we can choose $\bar \delta$ small enough such that $\bar \delta \leq \delta_0$, where $\delta_0$ is the bound appearing in proposition \ref{fejahyp2}, and the set $ \Sigma \setminus \Gamma_\de$ is enclosed by $\pi$ and the set obtained as the union of all the exterior and interior touching balls to  the reflection of $S$ about $\pi$, $S^\pi$. This implies that for any $p \in \Sigma \setminus \Gamma_\de$ there exists $q \in \partial \Gamma_\de$ such that $d_{\Sigma}(p,q) \leq \de$ and we can apply the estimates in proposition \ref{fejahyp2}. Indeed from \eqref{bound on nu N+1} we have that
$$
|N_p - \tau_q^p(N_q)|_p\leq C \de\,,\mbox{ and } g_p(N_p, \tau_q^p(N_q))\geq \sqrt{1-C^2\delta^2}\,,
$$ 
where $C=C(\rho)$. Therefore 
$$
g_p(N_p,\omega_p)= g_p(N_p-\tau_q^p(N_q),\omega_p)+g_p(\tau_q^p(N_q)\cdot \omega_p) \leq C\delta+g_p(\tau_q^p(N_q),\omega_p)
$$
and by using 
$$
g_p(\tau_q^p(N_q),\omega_p)=g_q(N_q, \tau_p^q(\omega_q))
$$
we obtain
$$
g_p(N_p,\omega_p) \leq C\delta+ g_q(N_q,\omega_q) + g_q(N_q ,\tau_p^q(\omega_p)-\omega_q)\leq C\delta+g_q(N_q,\omega_q) +|\tau_p^q(\omega_p)-\omega_q|_q \,. 
$$
Since 
$$
|\tau_p^q(\omega_p)-\omega_q|_q=0\,,
$$
we deduce 
$$
g_p(N_p,\omega_p) \leq  C\delta+g_q(N_q,\omega_q)\,. 
$$
This last bound holds for every $p \in \partial \Sigma$ and by choosing $\delta$ small enough in terms of $\rho$ we obtain \eqref{bellachegira}, as required. 

\medskip 
\emph{Case $ii)$: $\Gamma_\delta$ satisfies ii).}	Let $q\in \partial \Gamma_\delta$. 
By construction of the method of moving planes, $g_q(N_q, \omega_q) \geq 0$. We denote by $q^\pi$ the reflection of $q$ about $\pi$ and we have
$$
d(q^\pi,\hat q) \leq d(q^\pi,q) + d(q,\hat q) \leq 3 \delta \,.
$$
Up to consider a smaller $\delta$ in terms of $\rho$, from corollary \ref{corollary_trascendental} we find $C=C(\rho)$ such that $d_S(q^\pi,\hat q) \leq C \delta $ and 
$q^\pi\in \mathcal{U}_{\rho_1}(\hat q)$. Hence we can apply \eqref{bound on nu N+1} and obtain 
$$
g_{\hat q}(N_{\hat q} \cdot \tau_{q^\pi}^{\hat q}(N_{q^\pi}) \geq  \sqrt{1-C^2\delta^2}\\ \quad \textmd{ and } \quad \ \ |N_{\hat q} - \tau_{q^\pi}^{\hat q}(N_{q^\pi})|_{\hat q} \leq C \delta \,.
$$
Since $N_{q^\pi}$ and $q^\pi$ are the reflection of $N_q$ and $q$ about $\pi$, respectively, we have that
$$
g_q(N_q,\omega_q) = - g(\tau_{q^\pi}^{q}(N_{q^\pi}),\omega_q) \,,$$
and hence
$$
2 g_q(N_q,\omega_q) =g_q(N_q-\tau_{q^\pi}^{q}(N_{q^\pi}),\omega_q) =g_q(N_q-\tau_{\hat q}^{ q}(N_{\hat q})),\omega_q) +
g_q(\tau_{\hat q}^{q}(N_{\hat q})-\tau_{q^\pi}^{q}(N_{q^\pi})),\omega_q) \,.
$$
This implies that 
$$
0\leq 2g_q( N_q ,\omega_q) \leq |N_q-\tau_{\hat q}^{q}(N_{\hat q})|_q+|\tau_{\hat q}^{q}(N_{\hat q})-\tau_{q^\pi}^{q}(N_{q^\pi})|_q \,. 
$$
Next we observe that 
$$
|\tau_{\hat q}^{q}(N_{\hat q})-\tau_{q^\pi}^{q}(N_{q^\pi})|_q=|N_{\hat q}-\tau^{\hat q}_{q}\tau_{q^\pi}^{q}(N_{q^\pi})|_{\hat q}\leq c(\delta) |N_{\hat q}-\tau^{\hat q}_{q^\pi}(N_{q^\pi})|_{\hat q}
$$
where $c(\delta)\to 0$ when $\delta\to 0$.  Hence for a suitable choice of $\bar 	\delta$ we get
\begin{equation} \label{treno}
0\leq 2g_q(N_q,\omega_q) \leq\frac18\,,
\end{equation}
and the claim follows from case  $i)$.	 
\end{proof}

Now we can focus on the proof of the first part of theorem \ref{thm approx symmetry 1 direction}, and show that there exist constants $\ep$ and $C$, 
depending only on $n$, $\rho$ and $|S|_g$, such that if
$$
{\rm osc}( {\sf H}_S) \leq \ep,
$$
then for any $p$ in $\Sigma$ there exists $\hat p$ in $\hat\Sigma$ satisfying
\begin{equation}\label{bound on dist}
d(p,\hat p) +  |N_p-\tau_{\hat p}^p (N_{\hat p})|_p \leq C\, {\rm osc}( {\sf H}_S)  \,. 
\end{equation}

In the proof of theorem \ref{thm approx symmetry 1 direction} we are going to choose a number $\delta>0$ sufficiently small in terms of $\rho$, $n$ and $|S|_g$.  
A first requirement on $\delta$ is that the assumptions of lemmas \ref{lemma connected} and \ref{lemma connected II} are satisfied. Other restrictions on the value of $\delta$ will be done in the development of the proof. We subdivide the proof of the first part of the statement in four cases depending on the whether the distances of $p_0$ and $p$ from $\partial \Sigma$ are greater or less than $\delta$.

\subsubsection{Case 1. $d_{\Sigma}(p_0,\pa \Sigma) > \de$ and $d_{\Sigma}(p,\pa \Sigma) \geq \de$} \label{subsec case1}

In this first case we assume that $p_0$ and $p$ are interior points of $\Sigma$, which are far from $\pa \Sigma$ more than $\de$.
We first assume that $p_0$ and $p$ are in the same connected component of $\Sigma_\de$; then, lemma \ref{lemma connected II} will be used in order to show that $\Sigma_\de$ is in fact connected.

Let $r_0>0$ be such that $\mathcal U_{r_0}(p_i)\subset \Sigma$ for every $p_i\in \Sigma_\delta$. The value of $r_0$ follows from lemma \ref{Lemma1} by letting
\begin{equation} \label{rpippo}
r_0=\min(\bar r,\alpha \rho_1 )\,,
\end{equation}
where $\bar r$ is given by lemmas \ref{lem_p_pstar1} and  \ref{lem_p_pstar2}, $\alpha \in (0, \frac{1}{2} \min(1,\rho_1^{-1}))$ is such that $\alpha C \rho_1 \leq \tfrac{\delta}{4}$, and $C$ is the constant appearing in \eqref{d_S}.

\begin{lemma}\label{catenadipalle}
	Let $\ep_0 \in [0,1/2]$, $p_0$ and $p$ be in a connected component of $\Sigma_\delta$ and $r_i=(1-\ep_0^2)^ir_0$.  There exist an
	integer $J\leq J_\de$, where 
	\begin{equation}\label{N_delta_hyp}
	J_\de:=\max\left(4,\frac{2^{n-1} |S|_g}{ \de^{n-1}} \right) \,,
	\end{equation}
	and a sequence of points $\{p_1,\dots,p_J\}$ in $\Sigma_{\delta/2}$ such that
	\begin{eqnarray*}
		&& p_0,p \in \bigcup_{i=0}^J \overline{\mathcal U}_{r_i/4}(p_i)\,,\\
		&& \mathcal U_{r_0}(p_i)\subseteq \Sigma, \quad i = 0,\ldots, J\,,\\
		&& p_{i+1} \in\overline{\mathcal U}_{r_i/4}(p_i), \quad i=0,\ldots,J-1\,.
	\end{eqnarray*}
\end{lemma}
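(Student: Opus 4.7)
The strategy is to join $p_0$ to $p$ by a continuous curve $\gamma\colon[0,1]\to\Sigma_\delta$ contained in the connected component of $\Sigma_\delta$ that contains them (such a curve exists since $\Sigma_\delta$, being open in the $C^2$-manifold $\Sigma$, is locally path-connected), and then greedily discretize $\gamma$ into the desired chain.

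The choice $r_0=\min(\bar r,\alpha\rho_1)$ from \eqref{rpippo} is made so that the constant $\bar r$ in lemmas \ref{lem_p_pstar1}--\ref{lem_p_pstar2} is respected and, most importantly, so that lemma \ref{Lemma1}(ii) yields $d_\Sigma(q,q')\leq \alpha C\rho_1\leq \delta/4$ for every $q'\in\mathcal U_{r_0}(q)$. Consequently, whenever $q\in\Sigma_{\delta/2}$ one has $\mathcal U_{r_0}(q)\subset\Sigma_{\delta/4}\subset\Sigma$, which takes care of the second bulleted property. The chain is then built inductively: set $p_0=\gamma(0)$, and given $p_i=\gamma(t_i)$, let
$$
t_{i+1}=\sup\{t\in[t_i,1]:\gamma(t)\in \overline{\mathcal U}_{r_i/4}(p_i)\},\qquad p_{i+1}=\gamma(t_{i+1}),
$$
stopping at step $J$ when $p\in\overline{\mathcal U}_{r_J/4}(p_J)$. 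Since $\gamma\subset\Sigma_\delta$ every $p_i$ lies in $\Sigma_\delta\subset\Sigma_{\delta/2}$, the third bulleted property holds by definition, and the covering property follows from the stopping rule.

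The delicate step is the bound $J\leq J_\delta$. The key observation is that, by corollary \ref{corollary_trascendental}, each intrinsic ball $\mathcal B_{\delta/2}(p_i)$ lies in $\Sigma$ and has area at least $c(\delta/2)^{n-1}$. Extracting from $\{p_0,\dots,p_J\}$ a maximal sub-collection $\{p_{i_k}\}$ whose members have mutual ambient distance at least $\delta$, the associated balls $\mathcal B_{\delta/2}(p_{i_k})$ are pairwise disjoint and contained in $S$, which combined with the bound $|S|_g$ on the total area gives $|\{i_k\}|\leq 2^{n-1}|S|_g/\delta^{n-1}$ up to absorbing universal constants.

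The main obstacle, and what requires most of the bookkeeping, is to show that the full chain is not longer than (a constant multiple of) the extracted sub-chain. This is where the greedy rule plays its role: except at the terminal step, the maximality of $t_{i+1}$ forces $p_{i+1}$ onto $\partial\overline{\mathcal U}_{r_i/4}(p_i)$, so consecutive points separate in a controlled way and the number of original indices falling between two successive $i_k$ is bounded by an absolute constant depending only on $n$ and $\rho$. Combining these two facts yields $J\leq J_\delta$, with the $\max$ with $4$ in the definition of $J_\delta$ absorbing the degenerate case in which $\gamma$ is so short that it is covered by only a handful of neighborhoods.
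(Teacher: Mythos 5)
Your construction of the chain (a continuous path in $\Sigma_\delta$, greedy discretization, and the verification of the second and third bulleted properties via lemma \ref{Lemma1}(ii)) is sound and close in spirit to the paper, which instead invokes \cite[Proposition A.1]{INDIANA} to produce a piecewise geodesic path in $\Sigma_{\delta/2}$ of length $L\leq \delta J_\delta$ and then discretizes it at arc-length increments $r_i/4$. The genuine gap is in your counting step, and it is a circularity rather than a missing detail. The separation between consecutive chain points (and, via the greedy rule, between any two chain points $p_i,p_j$ with $i<j$) is of order $r_i/4=(1-\ep_0^2)^i r_0/4$, which shrinks with $i$. Hence the number of chain points lying within distance $\delta$ of a fixed $p_{i_k}$ is controlled by a packing argument at the scale $r_J$ of the \emph{last} radius, not at a scale depending only on $n$ and $\rho$. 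To conclude that this number is an absolute constant you need $r_i\gtrsim r_0$ for all $i\leq J$, i.e.\ $\ep_0^2 J\lesssim 1$ --- which presupposes the bound $J\leq J_\delta$ you are trying to prove. Moreover, for $\ep_0$ bounded away from $0$ the scheme genuinely fails: since $\sum_i r_i/4=r_0/(4\ep_0^2)<\infty$, the greedy chain can only traverse an intrinsic distance of order $r_0/\ep_0^2$ and may stall before reaching $p$; you never address termination. (This reflects the fact that the lemma is only meaningful when $\ep_0$ is small compared with $J_\delta^{-1/2}$, which is the regime in which it is applied, with $\ep_0$ proportional to $\osc({\sf H}_S)$.)

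The repair is to apply your packing argument in the right place: to a maximal $\delta$-separated net of the whole connected component of $\Sigma_{\delta/2}$, not to the chain itself. Disjointness of the intrinsic balls $\mathcal B_{\delta/2}$ centred at the net points, together with the area lower bound \eqref{area} of corollary \ref{corollary_trascendental} and the global bound $|S|_g$, bounds the cardinality of the net and yields \emph{a priori} a path from $p_0$ to $p$ of length at most $\delta J_\delta$; this is exactly the content of the cited \cite[Proposition A.1]{INDIANA}. Only then does one discretize: for $i\leq J_\delta$ and $\ep_0$ small one has $r_i\geq (1-\ep_0^2)^{J_\delta}r_0\geq r_0/2$, so each step advances the arc-length parameter by a fixed amount $\gtrsim r_0$ and the chain terminates after at most $CL/r_0\leq CJ_\delta$ steps. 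Your two packing estimates are the correct ingredients, but as written the second one is circular and the first is applied to the wrong object.
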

\begin{proof}
In view of corollary \ref{corollary_trascendental}, for every $z$ in $\Sigma$ and $r\leq \rho_0$, the geodesic ball $\mathcal B_{r}(z)$ in $\Sigma$ satisfies 
$$ 
{\rm Area}(\mathcal B_{r}(z))\geq c r^{n-1}
$$
where $c$ is a constant depending only on $n$. A general result for Riemannian manifolds with boundary (see e.g. \cite[Proposition A.1]{INDIANA}) implies that there exists a piecewise geodesic path parametrized by arc length $\gamma\colon  [0,L]\to \Sigma_{\delta/2}$ connecting $p_0$ to $p$ and of length $L$ bounded by $\delta J_{\delta}$, where $J_\delta$ is given by \eqref{N_delta_hyp}.

	%
	We define $p_i=\gamma(r_i/4)$, for $i=1,\dots,J-1$  and $p_J=p$. Our choice of $r_0$ guarantees that $\mathcal U_{r_0}(p_i)\subset \Sigma$, for every $i = 0,\ldots, J$, and the other required properties are satisfied by construction.
\end{proof}

Since $p$ and $p_0$ are in a connected component of $\Sigma_\de$, there exists a sequence of points $p_1,\dots,p_J$ in the connected component of $\Sigma_{\delta/2}$ containing $p_0$, with $J\in \NN$ and $p_J=p$, and a chain of subsets $\{\mathcal U_{r_0}(p_i)\}_{\{i=0,\ldots,J\}}$ of $\Sigma$ as in lemma \ref{catenadipalle}.
We notice that $\Sigma$ and  $\hat\Sigma$ are tangent at
$p_0$ and that in particular the two normal vectors to $\Sigma$ and $\hat \Sigma$ at $p_0$ coincide. 
Now we apply the map $\varphi_{p_0}$ (see section \ref{section3}). Then  $\varphi_{p_0}(\Sigma)$ and $\varphi_{p_0}(\hat\Sigma)$  can be locally parametrized near $\varphi_{p_0}(p_0)$ as graphs of two functions $u_0,\, \hat u_0\colon B_{r_0} \subset \{x_n=0\} \to \RR$. Lemma \ref{Lemma1} implies that $| \nabla u_0|, |\nabla \hat u_0 | \leq M$ in $B_{r_0}$, where $M$ is some constant which depends only on $r_0$, i.e. only on $\rho$.  Hence the difference $u_0-\hat u_0$ solves a second-order linear uniformly elliptic equation of the form
$$
\mathcal L(u_0-\hat u_0)(x)={\sf H}_{\Gamma(u_0)}(x,u_0(x))-{\sf H}_{\Gamma(\hat u_0)}(x,\hat u_0(x))
$$
with ellipticity constants uniformly bounded by a constant depending only on $n$ and $\rho$. Since  $u_0(0)=\hat u_0(0)$ and $u_0 \geq \hat u_0$,
Harnack's inequality (see Theorems 8.17 and 8.18 in \cite{GT}) yields  
$$
\sup_{B_{r_0/2}} (u_0-\hat u_0)\leq C\,{\rm osc} ({\sf H}_S) \,,
$$
and from interior regularity estimates (see e.g. \cite[Theorem 8.32]{GT}) we obtain 
\begin{equation}\label{harnack step 1}
\|u_0- \hat u_0\|_{C^1(B_{r_0/4})} \leq C\,{\rm osc} ({\sf H}_S),
\end{equation}
where $C$ depends only on $\rho$ and $n$. Now we use lemmas \ref{lem_p_pstar1} and \ref{lem_p_pstar2}. Since $p_1\in \partial\, \mathcal U_{r_0/4}
(p_0)$, we can write $\varphi_{p_0}
(p_1)=(x_1,u_0(x_1))$, with $x_1\in \partial B_{r_0/4}$. Let $\hat p_1^* \in \hat\Sigma$ be such 
that 
$$
\varphi_{p_0}(\hat p_1^*)=(x_1,\hat u(x_1))
$$
and let $\hat p_1$ be the first intersection point between $\hat \Sigma$ and the geodesic path $\gamma$ starting from $p_1$ and 
tangent to $-N_{p_1}$ at $p_1$. From \eqref{harnack step 1} we have
\begin{equation} \label{conad}
d(\varphi_{p_0}(p_1),\varphi_{p_0}(\hat p_1^*)) + |\nu_{\varphi_{p_0}(p_1)} - \nu_{\varphi_{p_0}(\hat p_1^*)} | \leq C\,{\rm osc} ({\sf H}_S)
\end{equation}
%
which implies that  the assumptions in lemmas \ref{lem_p_pstar1} and \ref{lem_p_pstar2} are fullfilled, and we obtain
\begin{equation} \label{esselunga}
d(p_1,\hat p_1) + |N_{p_1} - \tau_{\hat p_1}^{p_1}(N_{\hat p_1})|_{p_1} \leq C\,{\rm osc} ({\sf H}_S) \,,
\end{equation}
where $C$ depends only on $n$ and $\rho$.

Now we apply $\varphi_{p_1}$. By definition of $\varphi_{p_1}$, we have  $\varphi_{p_1}(\hat p_1)=te_n$ for some $t\in\RR$ ($t$ depends on the geometry of the ambient space). A standard computation yields   
$$
|\nu_{\varphi_{p_1}(p_1)}-\nu_{\varphi_{p_1}(\hat p_1)}|= |N_{p_1} - \tau_{\hat p_1}^{p_1}(N_{\hat p_1})|_{p_1} \,
$$
which in view of \eqref{esselunga} implies 
$$
|\nu_{p_1}-\nu_{\hat p_1}| \leq C \osc({\sf H}_S) \,,
$$
where $C$ is a constant that depends only on $\rho$ and $n$. 
Since $\osc({\sf H}_S) \leq \ep$  then $|\nu_{p_1}-\nu_{\hat p_1}|<C\ep$ and by choosing $\ep$ such that  $C\ep < 1$, we can apply \cite[Lemma 3.4]{JEMS} and obtain that $\Sigma$ and $\hat \Sigma$ are locally graphs of two functions
$$
u_1,\hat u_1: B_{r_1} \to \RR^+ \,,
$$
such that $u_1(0)=\varphi_{p_1}(p_1)$ and $\hat u_1(0)= \varphi_{p_1}(\hat p_1)$ and where
$$
r_1=(1-C^2\ep^2)r<\rho_1 \,.
$$ 
Now, we can iterate the argument we did before. Indeed, since 
$$
0 \leq \inf_{B_{r_1/2}} (u_1-\hat u_1) \leq u_1(0)-\hat u_1(0) \leq C \osc({\sf H}_S)\,,
$$
by applying Harnack's inequality we obtain that
$$
\sup_{B_{r_1/2}} (u_1-\hat u_1)\leq C\,\osc({\sf H}_S)
$$
and from interior regularity estimates we find 
\begin{equation}\label{harnack step 2}
\|u_1- \hat u_1\|_{C^1(B_{r_1/4})} \leq C \,\osc({\sf H}_S)\,,
\end{equation}
where $C$ depends only on $\rho$ and $n$. Hence, \eqref{harnack step 2} is the analogue of \eqref{harnack step 1}, and we can iterate the argument. The iteration goes on until we arrive at $p_N=p$ and obtain a point $\hat p_N \in \hat \Sigma$ such that
$$
d(p,\hat p_N) + |N_p-\tau_{\hat p_N}^{p}(N_{\hat p_N})|_p \leq C\osc({\sf H}_S) \,.
$$
In view of lemma \ref{lemma connected II} we have that $\Sigma_\delta$ is connected and the claim follows.

\subsubsection{Case 2: $d_\Sigma(p_0,\pa \Sigma) \geq \de$ and $d_\Sigma(p,\pa \Sigma) < \de$} \label{subsec case2}
We extend the estimates found in case 1 to a point $p$ which is far less than $\delta$ from the boundary of $\Sigma$. Let $q\in \Sigma$ and $p_{min}\in \partial \Sigma$ be such that
$$
d_{\Sigma}(q,\partial \Sigma)=\delta \,, \quad d_{\Sigma}(p,q)+d_{\Sigma}(p,\partial \Sigma )=\delta\,, \quad  \mbox{and }\quad  d_{\Sigma}(p,p_{min})=d_{\Sigma}(p,\partial \Sigma)\,.
$$
From case 1 we have that there exists $\hat q$ in $\hat \Sigma$ such that
$$
d(q,\hat q)+|N_q-\tau_{\hat q}^q(N_{\hat q})|_q \leq C\,{\rm osc}({\sf H}_S)\,.
$$
Lemma \ref{lemma connected II} (case $(ii)$) yields that 
\begin{equation} \label{bellachegirano}
0 \leq g_z(N_z,\omega_z) \leq \tfrac{1}{4},
\end{equation}
for any $z \in \partial \Sigma$ and $\Sigma_\delta$ is connected.

Let $q^{\pi}\in S$ be the reflection of $q$ about $\pi$  and fix  $r\leq \rho_1$ in order to define $\mathcal U_r(q^\pi)$. We denote by $U_{r}(q)$ the reflection of
$\mathcal U_r(q^\pi)\cap S$ about $\pi$ and $U'=\mathcal{U}_r(q^\pi)\cap \pi$. Proposition \ref{prop Luigi I} implies that $U'$ is a hypersurface of $\pi$ with an induced orientation and its principal curvatures $\kappa_i'$ satisfy the following bounds 
\begin{equation*}
\frac{1}{\sqrt{1-g_z(N_z,\omega_z)^2}}\kappa_1(z)\leq \kappa'_i(z)\leq \frac{1}{ \sqrt{1-g_z(N_z,\omega_z)^2} }\kappa_{n-1}(z) \,,
\end{equation*}
for every $z\in U'$ and $i=1,\dots,n-1$. From \eqref{bellachegirano} and since $|\kappa_i(z)| \leq \rho^{-1}$ for any $z \in S$ (this follows from the touching ball condition), we have
\begin{equation}\label{nu}
|\kappa'_i(z)|\leq \frac{2}{\rho} \,,
\end{equation}
for any $z	\in U'$. 
Let $U''$ be the Euclidean orthogonal projection of $\varphi_q(U')$ onto $\{x_{n}=0\}$.  In  order to apply Carleson estimates in \cite[Theorem 1.3]{BCN}, 
we need to prove the following 
\begin{lemma}
Let $\{\kappa''_1,\dots, \kappa''_{n-2}\}$ be the Euclidean principal curvature of $U''$ viewed as a hypersurface of $\RR^{n-1}$. Then 
\begin{equation}\label{lucionediavolo}
\|\kappa_i''\|_{\infty} \leq C\,,\quad i=1,\dots,n-2\,,
\end{equation}
for some constant $C=C(\rho)$. 
\end{lemma}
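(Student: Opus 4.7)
The plan is to apply proposition \ref{prop Luigi II} at each $z \in U'$ and to bound every factor in the right-hand side of \eqref{bound curvatures II generale} by a constant depending only on $n$ and $\rho$. From \eqref{nu} we already have $|\kappa'_i(z)| \leq 2/\rho$. Next, since $\varphi_q(U')$ is contained in a Euclidean ball around $\varphi_q(q)$ of radius controlled by $r+2\delta$, which is a constant multiple of $\rho_1$, the explicit formulas $h\equiv 1$, $h(x)=1/x_n$ and $h(x)=2/(1+|x|^2)$ give $h(z) \leq C(\rho)$ and $|\nabla h(z)|/h(z)^2 \leq C(\rho)$.

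The crux will be the quantitative transversality estimate
\[
(\nu'_z \cdot e_n)^2 + \frac{1}{1+|\nabla F(\bar z)|^2} \geq c(\rho) > 0.
\]
The vector $\nu'_z$ (Euclidean unit normal to $U'$ inside $\pi$) lies in $T_z\pi$, while the Euclidean unit normal ${\sf N}_\pi := (-\nabla F(\bar z), 1)/\sqrt{1 + |\nabla F(\bar z)|^2}$ to $\pi$ is Euclidean-perpendicular to $T_z\pi$; so $\nu'_z$ and ${\sf N}_\pi$ are Euclidean-orthogonal unit vectors forming a basis of the Euclidean orthogonal complement of $T_zU'$ in $\RR^n$. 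Decomposing $e_n$ in the resulting orthonormal splitting $\RR^n = T_zU' \oplus \langle \nu'_z \rangle \oplus \langle {\sf N}_\pi \rangle$ and applying Pythagoras yields the key identity
\[
(\nu'_z \cdot e_n)^2 + \frac{1}{1+|\nabla F(\bar z)|^2} = 1 - |\Pi_{T_zU'}(e_n)|^2 = \sin^2\bigl(\angle(e_n, T_zU')\bigr),
\]
with $\Pi$ the Euclidean orthogonal projection.

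It thus suffices to bound $|\Pi_{T_zU'}(e_n)|$ uniformly away from $1$. Since $U' \subset \mathcal U_r(q^\pi) \subset S$, we have $T_zU' \subset T_zS$, whence $|\Pi_{T_zU'}(e_n)|^2 \leq 1 - (\nu_z\cdot e_n)^2$, where $\nu_z$ denotes the Euclidean unit normal to $\varphi_q(S)$ at $\varphi_q(z)$. By lemma \ref{Lemma1}, $\varphi_q(S)$ is an Euclidean graph over $\{x_n=0\}$ with gradient bounded by $|x|/\sqrt{\rho_1^2 - |x|^2}$; refining the choice of $r$ and $\delta$ so that $|\varphi_q(z) - \varphi_q(q)|_E \leq \rho_1/2$ for every $z \in U'$ then forces $(\nu_z\cdot e_n)^2 \geq 3/4$, which is the desired lower bound. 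Plugging everything back into \eqref{bound curvatures II generale} yields \eqref{lucionediavolo}.

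The hardest part is the transversality step: the algebraic identity converting $(\nu'_z\cdot e_n)^2 + (1+|\nabla F|^2)^{-1}$ into $\sin^2(\angle(e_n, T_zU'))$ is what allows the Euclidean touching ball condition on $\varphi_q(S)$ to control $U''$ through the inclusion $T_zU' \subset T_zS$.
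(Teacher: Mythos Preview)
Your proof is correct and somewhat cleaner than the paper's on the transversality step. The paper (which carries out the spherical case in detail, deferring the other two to \cite{JEMS} and \cite{INDIANA}) first reduces the specific spherical bound \eqref{bound curvatures II} to $|\kappa_i''(\bar z)| \leq 8(\nu'_z\cdot e_n)^{-3}(\max|\kappa'_j|+4\rho_1)$ via an estimate $R>1/(2\delta)$ on the Euclidean radius of $\varphi_q(\pi)$, and then bounds $\nu'_z\cdot e_n$ from below by writing $\nu'_z\cdot e_n = \nu'_z\cdot(e_n-\nu_z)+\nu'_z\cdot\nu_z$ and invoking both $|e_n-\nu_z|\leq 1/4$ and the identity $\nu'_z\cdot\nu_z=\sqrt{1-g_z(\omega_z,N_z)^2}\geq \sqrt{15}/4$ from \eqref{*formula} and \eqref{bellachegira}.

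Your route is different: the Pythagorean identity $(\nu'_z\cdot e_n)^2+(1+|\nabla F|^2)^{-1}=1-|\Pi_{T_zU'}(e_n)|^2$ together with the inclusion $T_zU'\subset T_z\varphi_q(S)$ gives directly $(\nu'_z\cdot e_n)^2+(1+|\nabla F|^2)^{-1}\geq (\nu_z\cdot e_n)^2$, so only the graph estimate $|e_n-\nu_z|$ small is needed, not the angle bound \eqref{bellachegira}. This handles all three geometries uniformly through \eqref{bound curvatures II generale} and avoids the spherical-specific computation of $R$. The trade-off is that the paper's argument isolates $\nu'_z\cdot e_n\geq 1/4$ as a standalone fact, which may be conceptually useful elsewhere; your argument controls only the sum. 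One small remark: when you write $|\varphi_q(z)-\varphi_q(q)|_E\leq \rho_1/2$, what you actually need for the gradient bound in lemma~\ref{Lemma1} is that the projection of $\varphi_q(z)$ onto $\{x_n=0\}$ lies in $B_{\rho_1/2}$; this follows from your hypothesis but is worth stating.
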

\begin{proof}
We refer to \cite{JEMS} and \cite{INDIANA} for the proof of the assertion in the Euclidean and the Hyperbolic spaces, respectively, and we focus here on the spherical case.  
Here we use the same notation as in section \ref{subsect Luigi}. In particular we recall that for $z\in \varphi_q( U' )$, $\bar z$ is the projection of $z$ onto $U''$. 
Proposition \ref{prop Luigi II} yields 
$$
|\kappa_i''(\bar z)|\leq \frac{4 |(z-O_{\pi})_n|}{R(1+|z|^2)^2}\left((\nu_z'\cdot e_{n})^2+\frac{(z-O_{\pi})_n^2}{R^2}\right)^{-3/2}
\left(\max\{|\kappa_1'(z)|,|\kappa_{n-1}'(z)|\}+4|z|\right)
$$
for every $z\in U'$, where $O_{\pi}$ is the center of $\varphi_{q}(\pi)$ in $\RR^n$. 

We notice that the radius of $\varphi_{q}(\pi)$ is given by 
$$
R=\tfrac{1}{2a}+\tfrac{a}{2} \,,
$$
where $a$ is the Euclidean distance between $\varphi_q(\pi)$ and the origin of $\RR^n$. This follows from the proof of lemma \ref{lem_p_pstar2}: indeed, up to apply a rotation, we may assume that the point on $\pi$ having minimal Euclidean distance from the origin is $(a,0,\dots,0)$, with $a>0$, and that the normal to $\pi$ in $(a,0,\dots,0)$ is $e_1$. From a straightforward calculation we obtain the value of $R$.

Since $d(q,\pi)<\delta$ we have  $a<\delta $ which implies 
\begin{equation}\label{stimaR}
R>\frac{1}{2\delta}\,. 
\end{equation}
Since $U' \subset \mathcal{U}_r(q)$ and $\varphi_q(q)=O$, our choice of $r$ implies that for any  $z\in \varphi_q(U')$  we have
$$
|z|\leq \rho_1=\frac{\rho}{\pi} 
$$
and 
$$
|z-O_{\pi}|\leq |z-q|+|q-O_{\pi}|\leq \delta +R\leq 2R\,,
$$
and then \eqref{stimaR} implies  
\begin{equation}
\begin{aligned}
|\kappa_i''(\bar z)|
&\, \leq 8 (\nu_z'\cdot e_{n})^{-3}
\left(\max\{|\kappa_1'(z)|,|\kappa_{n-1}'(z)|\}+4\rho_1 \right)\,. 
\end{aligned}
\end{equation}

Now we give a lower bound of $\nu_z'\cdot e_{n}$. We can write 
$$
\nu_z'\cdot e_{n} = \nu_z'\cdot (e_{n} - \nu_z) + \nu'_z \cdot \nu_z = \nu_z'\cdot (e_{n} - \nu_z) + g_z(N'_z,N_z)\,.
$$
We firstly observe that
\begin{equation}\label{firstly}
g_z(N'_z,N_z)\geq \frac12 \,.
\end{equation} 
Indeed formula \eqref{*formula} yields 
$$
g_z(N'_z,N_z)=\sqrt{1-g_z(\omega_z, N_z)^2}\,,
$$
and  \eqref{bellachegira} implies \eqref{firstly} (here we can change configuration by considering the stereographic projection from the antipodal point to $\varphi_q^{-1}(z)$ in order to regard $\pi$ as a vertical hyperplane of $\mathbb{R}^n$). Moreover, from lemma 2.1 in \cite{JEMS} we can choose $r$ small enough in terms of $\rho_1$ in order to obtain $|e_{n} - \nu_z| \leq 1/4$.  Hence 
$$
\nu_z'\cdot e_{n} = \nu_z'\cdot (e_{n} - \nu_z) + \nu'_z \cdot \nu_z \geq \frac12-|\nu_z'\cdot (e_{n} - \nu_z)|\geq \frac14
$$
and 
$$
|\kappa_i''(\bar z)| \leq C\,,
$$
for some constant $C=C(\rho)$, as required.
\end{proof}

We denote by $x$ and $E_r$ the projections of $\varphi_q(p_{min})$ and $\varphi_q(U_r(q))$ onto $\{x_n=0\}$, 
respectively. The Euclidean distance of $x$ from $\partial E_r$ is less than $C\delta$ where $C$ depends only on $\rho$ and, up to chose a smaller $\delta$ in terms of $\rho$, the projection of $p_{min}$ stays close to $U''\subset\partial E_r$ and we can apply theorem 1.3 in \cite{BCN}, corollary 8.36 in \cite{GT} and Harnack's inequality (see e.g. \cite[Corollary 8.36]{GT}) to obtain
\begin{equation}\label{lorenzino}
\sup_{B_{2\delta C}(x)\cap E_r}(u-\hat{u})\leq C (u-\hat{u})(z)+\osc({\sf H}_S)
\end{equation}
with $z=x+4C\delta\nu''_x$, where $\nu''_x$ is the interior normal to $U''$ at $x$. Thanks to \eqref{lucionediavolo} and by choosing $\delta$ small enough in terms of $\rho$, from \eqref{lorenzino} and Harnack's inequality we obtain 
\begin{equation} \label{violaharnack}
0 \leq  \|u - \hat u\|_{C^1(B_{C\de}(x) \cap E_r)} \leq C ( (u(0) - \hat u (0))  + \osc({\sf H}_S) )\,.
\end{equation}
Since $d_\Sigma(q, \partial \Sigma)=\delta$, from Case 1 we know that 
\begin{equation*}
d(q,\hat q) + |N_{q} - \tau_{\hat q}^q(N_{\hat q}) |_q \leq C\osc({\sf H}_S) \,,
\end{equation*}
and from \eqref{violaharnack} we obtain that 
\begin{equation} \label{violaharnack_I}
0 \leq  \|u - \hat u\|_{C^1(B_{C\de}(x) \cap E_r)} \leq C \osc({\sf H}_S) \,.
\end{equation}
From lemma \ref{lem_p_pstar2} we deduce
$$
d(p,\hat p) + |N_{p} - \tau_{\hat p}^p (N_{\hat p}) |_{p} \leq C \osc({\sf H}_S) \,,
$$ 
as required.

\subsubsection{Case 3: $0 < d_\Sigma(p_0,\pa \Sigma) < \de$.} We first prove the following preliminary lemma which implies via lemma 	\ref{lemma connected} that $\Sigma$ is connected.

\begin{lemma}
By choosing $\delta$ small enough in terms of $\rho$, the following inequality holds 
\begin{equation} \label{quasi_ortog_p0}
0 \leq  g_{p_0}(N_{p_0},\omega_{p_0}) \leq \frac{1}{4} \,.
\end{equation}
\end{lemma}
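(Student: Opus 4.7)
The plan is to reflect $p_0$ across the critical hyperplane $\pi$ and compare the two normals via the parallel transport along the orthogonal geodesic joining $p_0$ and its image. Denote by $R_\pi$ the reflection isometry of $\mathbb M^n$ across the totally geodesic hyperplane $\pi$, and set $p_0^\pi:=R_\pi(p_0)$. Since $p_0\in S_-\cap\Sigma$ and $\Sigma=R_\pi(S_+)$, the point $p_0^\pi$ lies in $S_+\subset S$. The hypothesis $d_\Sigma(p_0,\partial\Sigma)<\delta$ together with $\partial\Sigma\subset\pi$ gives $d(p_0,\pi)<\delta$ and hence $d(p_0,p_0^\pi)<2\delta$.

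I would next convert ambient closeness into intrinsic closeness on $S$. Choosing $\delta$ small enough in terms of $\rho$, the uniform touching ball condition should force $p_0^\pi\in\mathcal U_{\rho_1}(p_0)$, so that corollary \ref{corollary_trascendental} yields $d_S(p_0,p_0^\pi)\leq C\,d(p_0,p_0^\pi)\leq 2C\delta$. Then proposition \ref{fejahyp2} applied to $(p_0,p_0^\pi)$ produces
$$
|N_{p_0}-\tau_{p_0^\pi}^{p_0}(N_{p_0^\pi})|_{p_0}\leq C'\,d_S(p_0,p_0^\pi)\leq 2CC'\delta.
$$
The algebraic heart of the proof is a reflection identity: at the internal tangency $p_0$ the surfaces $\Sigma$ and $S_-$ share the oriented inward normal, and since $\Sigma=R_\pi(S_+)$ with both orientations induced by inwardness to $\Omega$, one has $N_{p_0^\pi}=dR_\pi^{-1}(N_{p_0})$ in $T_{p_0^\pi}\mathbb M^n$. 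The geodesic joining $p_0^\pi$ to $p_0$ is perpendicular to $\pi$, so parallel transport along it preserves both $\omega$ and its orthogonal complement, whereas $dR_\pi$ reverses $\omega$ and fixes the orthogonal complement; consequently $\tau_{p_0^\pi}^{p_0}\circ dR_\pi^{-1}$ equals the reflection $\mathcal R$ of $T_{p_0}\mathbb M^n$ about $\omega_{p_0}^{\perp}$, and
$$
\tau_{p_0^\pi}^{p_0}(N_{p_0^\pi})=\mathcal R(N_{p_0})=N_{p_0}-2\,g_{p_0}(N_{p_0},\omega_{p_0})\,\omega_{p_0}.
$$
Combining with the previous display gives $2|g_{p_0}(N_{p_0},\omega_{p_0})|\leq 2CC'\delta$, and shrinking $\delta$ in terms of $\rho$ yields the upper bound in \eqref{quasi_ortog_p0}. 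For the lower bound I would invoke the moving-planes construction: up to the critical position, $S_+$ is a graph over its projection onto $\pi$, so $g_r(N_r,\omega_r)\leq 0$ for every $r\in S_+$; applied at $p_0^\pi\in S_+$, the sign of the $\omega$-component is flipped by $dR_\pi$, which produces $g_{p_0}(N_{p_0},\omega_{p_0})\geq 0$.

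The main obstacle I expect is verifying that $p_0^\pi$ belongs to the local graph chart $\mathcal U_{\rho_1}(p_0)$, so that corollary \ref{corollary_trascendental} is applicable. Ambient proximity alone does not exclude the possibility that $p_0^\pi$ sits on another sheet of $S$ that passes near $p_0$; what rescues the argument is that the uniform touching ball condition of radius $\rho$ keeps distinct sheets of $S$ ambient-separated by a quantity controlled only by $\rho$, so that for $\delta$ sufficiently small in terms of $\rho$ one does have $p_0^\pi\in\mathcal U_{\rho_1}(p_0)$. Once this geometric step is secured, the rest of the argument is the routine combination of proposition \ref{fejahyp2} with the reflection identity above.
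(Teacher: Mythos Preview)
Your argument is correct and takes a genuinely different route from the paper. The paper does not argue intrinsically here: it assumes the Euclidean statement (citing \cite[Section 4.1.3]{JEMS}) and then, for $\mathbb H^n$ and $\mathbb S^n_+$, reduces to it by applying an isometry (resp.\ the conformal chart) sending $p_0$ to $e_n$ (resp.\ to the origin) and $\pi$ to a Euclidean hyperplane; in that model the Euclidean bound $0\leq \nu_{p_0}\cdot e_1\leq 1/4$ translates verbatim into \eqref{quasi_ortog_p0}. Your approach, by contrast, is the intrinsic reflection--parallel-transport argument that the paper itself deploys in lemma \ref{lemma connected II}, case $ii)$, now specialized to the tangency point (take $q=\hat q=p_0$ there). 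This buys a self-contained proof uniform across all three space forms, whereas the paper's route is shorter only because it outsources the geometric content to \cite{JEMS}.

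The obstacle you flag is genuine, and your resolution is correct in substance. The precise claim is that there exists $c=c(\rho)>0$ such that any $q\in S$ with $d(p_0,q)<c$ lies in $\mathcal U_{\rho_1}(p_0)$. In the chart $\varphi_{p_0}$, writing $q=(x,h)$, the two touching balls at $p_0$ force $|h|\leq |q|^2/(2\rho_1)$; applying the touching ball condition now at the graph point $(x,u(x))$ shows that $(x,h)$ lies in one of the open touching balls there unless $h=u(x)$, which gives $q\in\mathcal U_{\rho_1}(p_0)$. The paper invokes exactly this step, with the same brevity, in the proof of lemma \ref{lemma connected II} (``Up to consider a smaller $\delta$ in terms of $\rho$, from corollary \ref{corollary_trascendental} \ldots $q^\pi\in\mathcal U_{\rho_1}(\hat q)$''), so your level of detail matches the paper's own standard.
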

\begin{proof}
We assume the statement in the Euclidean case (see  \cite[Section 4.1.3]{JEMS}) and we show how to deduce the 
claim in the hyperbolic and in the spherical case. 
We first consider the Hyperbolic case.  Up to apply an isometry we can assume that $p_0=e_n$ and $\pi=\{x_1=0\}$. 
Our assumptions on $S$ imply that its diameter is bounded in terms of $\rho$ and $|S|_g$ (see e.g. \cite[Proposition A.2]{INDIANA}).  Therefore $S$ is 
contained in an Euclidean ball about the origin and of radius depending only on $\rho$ and $|S|_g$. Up to choose $\delta$ 
small enough in terms of $\rho$, we have that \cite[Section 4.1.3]{JEMS} implies 
$$
0 \leq \nu_{p_0}\cdot e_1 \leq \frac{1}{4} \,.
$$
Since $\nu_{p_0}\cdot e_1= g_{p_0}(N_{p_0},\omega_{p_0})$, the claim follows.  
In the spherical case the proof is analogue once the setting is modified as follows: we work in $(\RR^n,g)$, where $g$ is the round metric \eqref{round metric},  assuming that $p_0=O$ and $\pi$ is an Euclidean 
hyperplane. 
\end{proof}
%
%
%
%
%
%
Then we prove the existence of a point $q\in \Sigma$ such that 
\begin{equation}\label{martedi}
\begin{cases}
d(q,\hat q) + |N_q - \tau_{\hat q}^q (N_{\hat q})|_q  \leq C \osc({\sf H}_S)\\
d_\Sigma (q,\pa \Sigma) \geq \delta
\end{cases}
\end{equation}
and we apply cases 1 and 2 to conclude.

In the same fashion as in case 2, we can locally write 
$\varphi_{p_0}(\Sigma)$ and $\varphi_{p_0}(\hat \Sigma)$ as graphs of function $u, \hat u\colon E_r\to \RR$ near $\varphi_{p_0}(p_0)$, respectively. 
Without loss of generality we can assume $r<1$ (indeed $r$ must be chosen small enough in terms of $\delta$). Let $U''\subset\pa E_r$ be
the projection of $\varphi_{p_0} ({U}_r(p_0) \cap \pi)$ onto $\lbrace x_n=0\rbrace$. 
Analogously to case 2,  the Euclidean principal curvatures of $U''$ are bounded by a constant 
$\mathcal{K}$ depending only on $\rho$. Then we can argue as in \cite[Section 4.1.3]{JEMS} and find a point $y \in E_r$ of the form 
$$
\begin{aligned}
y=\bar x +2 c_*\de \nu''_{\bar x} 
\end{aligned}
$$
where $c_*$ is $1$ in the Euclidean and in the spherical case, while it is  the constant $c$ appearing in \eqref{oggilunedi} in the hyperbolic case and  
 $\bar x \in U''$ is such that
$$
|\bar x| = \min_{x \in U''} |x| \,.
$$
By choosing $\delta$ sufficiently small in terms of $\rho$ we have 
$$
d(w,\hat w^*) + |\nu_w - \nu_{\hat w^*}| \leq C \osc({\sf H}_S) \,,
$$
where $w=(y,u(y))$, $\hat w^*=(y,\hat u(y))$. Lemmas \ref{lem_p_pstar1} and \ref{lem_p_pstar2} yield 
$$
d(q,\hat q) + |N_q - \tau_{\hat q}^q (N_{\hat q})|_q  \leq C \osc({\sf H}_S) \,,
$$
where $q=\varphi^{-1}_{p_0}(w)$, $\hat q^*=\varphi^{-1}_{p_0}(\hat w^*)$ and $\hat q$ the first intersection point between  $\hat \Sigma$ and the geodesic path starting from $q$ and tangent to 
$-N_{q}$ at $q$.

Let $z$ be a point on $\partial U_r(p_0)$ realizing $d(q,\partial U_r(p_0))$.  By construction and from lemma \ref{distance} we have 
$$
d_\Sigma (q,\pa \Sigma) \geq  d(q,z) \geq  c_* |\varphi_{p_0}(q)-\varphi_{p_0}(z)|\geq 2\delta \,.
$$
Since $d_\Sigma (q,\pa \Sigma) \geq \delta$ $q$ satisfies \eqref{martedi} and the claim follows.

\begin{figure}[h]
	\centering
	\includegraphics[scale=1]{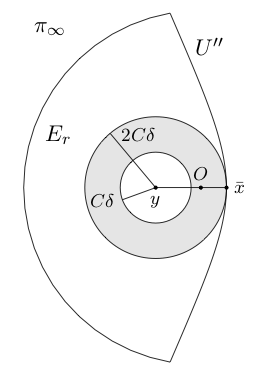}
	\caption{Case 3 in the proof of theorem \ref{thm approx symmetry 1 direction}.}
	\label{fig case 3}
\end{figure}

\subsubsection{Case 4: $p_0 \in \pa \Sigma$.}
This is the limit configuration of case 3 when $d_\Sigma(p_0,\pa \Sigma) \to 0$. Indeed, here $E_r$ is a half-ball in $\mathbb{R}^n$ and the argument used in case $3$ can be easily adapted. This completes the proof of the first part of theorem \ref{thm approx symmetry 1 direction}.

\subsubsection{Last step:  $d(x,\Sigma \cup \Sigma^\pi)\leq  C \osc({\sf H}_S) $ for every $x\in \Omega$.}
Assume by contradiction that 
$$
d(x,\Sigma \cup \Sigma^\pi)> C \osc({\sf H}_S)
$$ 
for some $x$ in $\Omega$. Since $\Omega$ is connected, it is possible to find $y\in \Omega$, such that
$$
y\in \Omega_-\quad \mbox{ and } \quad C \osc({\sf H}_S) < d(y,\Sigma) \leq 2 C \osc({\sf H}_S) \,,
$$
where 
$$
\begin{aligned}
&\Omega_+=\{p\in \Omega\,\,:\,\, p\in \pi_{v,t} \mbox{ for some }t>m_v\}\,,\\
&\Omega_-=\{p\in \Omega\,\,:\,\, p\in \pi_{v,t} \mbox{ for some }t<m_v\}\,. 
\end{aligned}
$$
Let $p$ be a projection of $y$ over $\Sigma\cup \Sigma^{\pi}$. If $p\in\Omega_+$, then $y$ belongs to the exterior touching ball of $S$ at $p$, which gives a contradiction. The same contradiction is obtained when  $p\in\pi$ since, in that case $g_p(N_p, \omega_p)\leq 1/4$. If $p\in \Omega_-$, we can find a point $\hat p\in S$ such that $\hat p$ and $p$ lies on the geodesic $\gamma$ starting from $p$ and orthogonal to $\Sigma^\pi$ and such that
$$
d(p,\hat p) + |N_p - \tau_{\hat p}^p (N_{\hat p})|  \leq C \osc({\sf H}_S) \,.
$$
By the smalleness of $\osc({\sf H}_S)$ we obtain that $y$ belongs to the exterior touching ball of $S$ at $p$, which is a contradiction.

 \hspace*{\fill}  \begin{math}\Box\end{math}

\section{ Global approximate symmetry \label{section_6}}
From the previous section we have that if a $C^2$-regular closed hypersurface  $S= \partial \Omega$  embedded in $\mathbb M^n_+$ satisfies the assumptions of theorem \ref{thm approx symmetry 1 direction} then it is almost symmetric with respect to any direction, with the almost symmetry quantified by the deficit ${\rm osc }({\sf H}_S)$. In this section we show how this result leads to the almost radial symmetry of $S$. Such procedure is not peculiar of the kind of deficit considered, but it can be applied whenever one has the approximate symmetry in any direction with respect to some deficit.  More precisely we consider the following 

\begin{definition}
{\rm Let  $\Upsilon$ be the space of open sets $\Omega$ in $\mathbb M^n_+$ whose  
boundary is a $C^2$-regular connected closed  embedded  hypersurface, with the topology induced by the Hausdorff distance. A {\em deficit function} is any continuous function $\deficit: \Upsilon \to [0,+\infty)$ such that $\deficit (\Omega) = 0$ if and only if $\Omega$ is a ball.}
\end{definition}

Form now on we fix a deficit function ${\rm def}.$ 
\begin{definition}
{\rm We say that a bounded open set $\Omega$ satisfies the \emph{approximate symmetry property (ASP)} if there exists a constant $\mathcal K>0$ satisfying the following condition: for every direction $v$ there exists a connected component $\Sigma$ of the maximal cap in the direction $v$ such that 
$$
d(p,\Sigma \cup \Sigma^{\pi_v}) \leq \mathcal K\,\deficit(\Omega) \,,
$$
for every $p \in \Omega$.}
\end{definition}

The main theorem in this section is the following

\begin{theorem} \label{thm_novaga}
Let $S=\partial \Omega$ be a $C^2$-regular closed hypersurface embedded in $\mathbb M^n_+$, with $\Omega$ satisfying (ASP) and 
\begin{equation} \label{luigione0}
\deficit(\Omega) \leq \frac{|\Omega|_g}{4 \mathcal{K}}  \,.
\end{equation} 
There exist $\mathcal O  $ in $\mathbb M_+^n$ and two balls $B_r^d$ and $B_R^d$ centered at $\mathcal{O}$ of radius $r$ and $R$, respectively, with $r\leq R$, such that 
\begin{equation*}
B^d_r(\mathcal{O}) \subseteq \Omega \subseteq B^d_R(\mathcal{O})
\end{equation*}
and 
\begin{equation}\label{11}
R-r \leq C \, \deficit(\Omega) \,,
\end{equation}
where $C$ depends on $n, \rho ,|S|_g$ and $\mathcal{K}$.
\end{theorem}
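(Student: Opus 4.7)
Take $\mathcal{O}$ to be the Riemannian center of mass of $\Omega$, which is well defined in each of the three space forms by the discussion following \eqref{gradP}, and use $\mathcal{O}$ as the reference point $\origin$ for the moving planes construction. I claim that $\mathcal{O}$, together with appropriate radii $r \leq R$, satisfies the conclusion.

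The core quantitative step is the bound
\begin{equation}\label{planeq1}
d(\mathcal{O}, \pi_v) \leq C_1 \deficit(\Omega), \qquad v \in T_\mathcal{O}\mathbb{M}^n_+,\ |v|=1.
\end{equation}
The (ASP) assumption implies that both $\Omega$ and its reflection $\Omega^{\pi_v}$ lie in the $\mathcal{K}\deficit(\Omega)$-neighborhood of the set $\Sigma \cup \Sigma^{\pi_v}$, which is symmetric about $\pi_v$; this yields $d_H(\Omega,\Omega^{\pi_v}) \leq \mathcal{K}\deficit(\Omega)$ and hence $|\Omega \triangle \Omega^{\pi_v}|_g \leq C \deficit(\Omega)$. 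Plugging this into the identity $\nabla P_\Omega(\mathcal{O}) = 0$, which is a rewriting of \eqref{gradP}, and comparing the integrals of $\exp_\mathcal{O}^{-1}$ over $\Omega$ and over $\Omega^{\pi_v}$, one isolates a contribution of order $m_v |\Omega|_g$ along $v$. Balancing this against the symmetric-difference error gives \eqref{planeq1}; the smallness hypothesis \eqref{luigione0} is used precisely to absorb the curvature-type remainders arising from the nonlinearity of $\exp_\mathcal{O}^{-1}$ in the non-Euclidean cases.

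Next, let $R_v$ denote the geodesic reflection of $\mathbb{M}^n_+$ about the totally geodesic hyperplane perpendicular to $v$ at $\mathcal{O}$, and $R_{\pi_v}$ the reflection about $\pi_v$. Since $R_v$ and $R_{\pi_v}$ differ by a translation along $\gamma_v$ of magnitude $2|m_v|$, the triangle inequality combined with (ASP) gives
\[
d_H(\Omega, R_v\Omega) \leq d_H(\Omega, R_{\pi_v}\Omega) + 2|m_v| \leq C_2 \deficit(\Omega).
\]
By the argument in the proof of Proposition \ref{aboutO}, every orientation-preserving isometry $\mathcal{I}$ of $\mathbb{M}^n_+$ fixing $\mathcal{O}$ is a composition of at most $n$ reflections $R_{v_j}$; iterating and using that isometries preserve Hausdorff distance yields $d_H(\Omega, \mathcal{I}\Omega) \leq n C_2 \deficit(\Omega)$, and the uniform two-sided touching ball condition of radius $\rho$ upgrades this to the boundary estimate $d_H(\partial\Omega, \mathcal{I}\partial\Omega) \leq C_3 \deficit(\Omega)$.

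For the conclusion, let $p_R \in \partial\Omega$ realize $R := \max_{p \in \partial\Omega} d(\mathcal{O},p)$. For any unit $w \in T_\mathcal{O}\mathbb{M}^n_+$ the symmetric space $\mathbb{M}^n_+$ admits an isometry $\mathcal{I}$ fixing $\mathcal{O}$ that sends the initial tangent of the geodesic from $\mathcal{O}$ to $p_R$ onto $w$; then $\mathcal{I}(p_R) = \exp_\mathcal{O}(Rw) \in \mathcal{I}(\partial\Omega)$, and the previous step supplies $q \in \partial\Omega$ with $d(q,\exp_\mathcal{O}(Rw)) \leq C_3 \deficit(\Omega)$. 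Varying $w$ shows that the sphere $\partial B^d_R(\mathcal{O})$ lies in a $C_3 \deficit(\Omega)$-neighborhood of $\partial\Omega$, and, coupled once more with the two-sided touching ball condition, this forces $\Omega$ to contain the ball $B^d_{R - C_4 \deficit(\Omega)}(\mathcal{O})$. Thus $r \geq R - C_4 \deficit(\Omega)$, while the inclusions $B_r^d(\mathcal{O}) \subseteq \Omega \subseteq B_R^d(\mathcal{O})$ are automatic from the definitions of $r$ and $R$; \eqref{11} follows.

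The principal obstacle is Step 1: the (ASP) hypothesis is Hausdorff-type in nature and must be converted into an integral identity for the Riemannian center of mass. The nonlinearity of $\exp_\mathcal{O}^{-1}$ in $\mathbb{H}^n$ and $\mathbb{S}^n_+$ requires a careful linearization, and the smallness threshold \eqref{luigione0} is exactly what makes the quadratic remainders negligible in the comparison. A secondary technical point is passing from the bulk bound $d_H(\Omega, \mathcal{I}\Omega)$ to the boundary bound $d_H(\partial\Omega, \mathcal{I}\partial\Omega)$, for which the uniform two-sided touching ball condition of radius $\rho$ is essential.
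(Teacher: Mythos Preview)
Your approach diverges from the paper's at every stage, and while the overall strategy is plausible, two of the steps have genuine gaps.

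\textbf{Comparison with the paper.} The paper does \emph{not} use the Riemannian center of mass. Instead it fixes an orthonormal frame $\{e_1,\dots,e_n\}$ at $\origin$, defines $\mathcal O=\bigcap_i \pi_{e_i}$, and proves the key estimate $d(\mathcal O,\pi_v)\le C\,\deficit(\Omega)$ by a volume--slicing argument (Lemma~\ref{lemmarta}): one shows $|\Omega\triangle \mathcal R(\Omega)|_g\le C\,\deficit(\Omega)$ for the point reflection $\mathcal R$ through $\mathcal O$, and then compares the volumes of the slabs $\{m_v+(k-1)\mu<s<m_v+k\mu\}$ to force $m_v$ small. For the conclusion the paper is much more direct than your rotation argument: it picks $p,q\in S$ realising $r$ and $R$, applies the moving planes in the single direction determined by the geodesic through $p$ and $q$, checks that $d(q,z)\le d(p,z)$ for $z$ the foot of $\mathcal O$ on $\pi_v$, and reads off $r\ge R-2\,d(\mathcal O,\pi_v)$ by the triangle inequality.

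\textbf{Gap in Step 1.} Your center--of--mass argument for \eqref{planeq1} is only a sketch. In the Euclidean case it can indeed be made to work (the centroid identity linearises cleanly and one gets $|m_v-\mathcal O\cdot v|\le \tfrac{\diam(\Omega)}{2|\Omega|}|\Omega\triangle\Omega^{\pi_v}|$), but in $\mathbb H^n$ and $\mathbb S^n_+$ the reflection $R_{\pi_v}$ does not commute with $\exp_{\mathcal O}^{-1}$, and you do not explain how to extract the ``$m_v|\Omega|_g$'' term from $\int_\Omega \exp_{\mathcal O}^{-1}(a)\,da$. Your claim that \eqref{luigione0} absorbs the curvature remainders misidentifies its role: in the paper \eqref{luigione0} is used only to guarantee that $\bigcap_i \pi_{e_i}\neq\emptyset$ in the hyperbolic case, not to control any linearisation error. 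The remainders in your scheme scale with $\diam(\Omega)$, which \eqref{luigione0} does not bound.

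\textbf{Gap in the final step.} From ``$\partial B_R^d(\mathcal O)$ lies in a $C_3\,\deficit$--neighbourhood of $\partial\Omega$'' it does \emph{not} follow that $B^d_{R-C_4\deficit}(\mathcal O)\subset\Omega$. Your rotational near--invariance only says that for each direction $w$ there \emph{exists} a boundary point near $\exp_{\mathcal O}(Rw)$; it does not rule out another boundary point much closer to $\mathcal O$ in (approximately) the same direction. Concretely, rotating $p_r$ and $p_R$ gives, for each $w$, points of $\partial\Omega$ near both $\exp_{\mathcal O}(rw)$ and $\exp_{\mathcal O}(Rw)$, and nothing you have written forces these to be close to each other. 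The touching ball condition does not close this gap without first establishing that $\Omega$ is star--shaped with respect to $\mathcal O$ --- a fact the paper proves only later (in the proof of Corollary~\ref{main2}) by a further moving--planes argument. The paper's one--direction argument through $p$ and $q$ sidesteps this difficulty entirely.
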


The following lemma is needed in order to  prove theorem \ref{thm_novaga}.

\begin{lemma} \label{lemmarta}
Let $S=\partial \Omega$ be a $C^2$-regular closed hypersurface embedded in $\mathbb M^n_+$, with $\Omega$ satisfying (ASP) and \eqref{luigione0}. Then there exists $\mathcal O  $ in $\mathbb M_+^n$ such that 
$$
d(\mathcal O, \pi_{v}) \leq C  \deficit(\Omega)\,,
$$
for every direction $v$ in  $T_{\origin}\mathbb M^n$, where $C$ depends on $n, \rho ,|S|_g$ and $\mathcal{K}$.
\end{lemma}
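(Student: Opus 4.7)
The natural candidate for $\mathcal{O}$ is the \emph{center of mass} of $\Omega$, the unique minimizer of the convex function $P_\Omega$ defined in section \ref{moving}, characterized by $\nabla P_\Omega(\mathcal{O})=0$ via \eqref{gradP}. Its existence and uniqueness in $\mathbb M^n_+$ were recalled at the beginning of section \ref{moving}. In view of lemma \ref{symspaces}, if $\Omega$ were exactly symmetric about the critical hyperplane $\pi_v$ then $\mathcal{O}\in\pi_v$; the goal is to make this assertion quantitative by replacing exact symmetry with the ASP.

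Fix a direction $v$ and denote by $\sigma_{\pi_v}$ the isometric reflection of $\mathbb M^n_+$ about the (totally geodesic) critical hyperplane $\pi_v$. Set $\Omega^\pi:=\sigma_{\pi_v}(\Omega)$. Since $\sigma_{\pi_v}$ is an isometry, the center of mass of $\Omega^\pi$ is $\sigma_{\pi_v}(\mathcal{O})$, so $\nabla P_{\Omega^\pi}(\sigma_{\pi_v}(\mathcal{O}))=0$. Combining this with $\nabla P_\Omega(\mathcal{O})=0$ and the formula \eqref{gradP} yields
$$
\nabla P_{\Omega^\pi}(\mathcal{O})\;=\;\nabla P_{\Omega^\pi}(\mathcal{O})-\nabla P_\Omega(\mathcal{O})\;=\;-\frac{1}{|\Omega|_g}\int_{\Omega^\pi\triangle\Omega}\pm\,\exp_{\mathcal{O}}^{-1}(a)\,da,
$$
with signs $\pm$ according to whether $a\in\Omega^\pi\setminus\Omega$ or $a\in\Omega\setminus\Omega^\pi$. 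The touching ball condition produces a uniform lower bound $|\Omega|_g\geq c(n,\rho)>0$ and a uniform diameter bound on $\Omega$ in terms of $n,\rho,|S|_g$ (cf.\ \cite[Proposition A.2]{INDIANA}), so in particular $|\exp_{\mathcal{O}}^{-1}(a)|_{\mathcal{O}}\leq C$. The remaining task is to estimate $|\Omega\triangle\Omega^\pi|_g$: by the ASP, $\Omega\setminus\Omega^\pi\subset\Omega$ lies in the $\mathcal K\,\deficit(\Omega)$-tubular neighborhood of the hypersurface $\Sigma\cup\Sigma^{\pi_v}$, whose area is bounded by $2|S|_g$; hypothesis \eqref{luigione0} ensures the tube width is small enough with respect to the curvature scale so that the standard tube formula applies and gives
$$
|\Omega\triangle\Omega^\pi|_g\;\leq\; C\,|S|_g\,\mathcal K\,\deficit(\Omega).
$$
Together with the bounds above, this yields $|\nabla P_{\Omega^\pi}(\mathcal{O})|_{\mathcal{O}}\leq C\,\deficit(\Omega)$, with $C=C(n,\rho,|S|_g,\mathcal K)$.

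The final step is to convert this gradient estimate into the desired distance estimate. Since $\Omega$, and hence $\Omega^\pi$, is contained in a geodesic ball of controlled radius, the function $P_{\Omega^\pi}$ is strongly convex with Hessian uniformly bounded below in a neighborhood of its unique minimizer $\sigma_{\pi_v}(\mathcal{O})$; this gives
$$
2\,d(\mathcal{O},\pi_v)\;=\;d(\mathcal{O},\sigma_{\pi_v}(\mathcal{O}))\;\leq\; C\,|\nabla P_{\Omega^\pi}(\mathcal{O})|_{\mathcal{O}}\;\leq\; C\,\deficit(\Omega),
$$
and the lemma follows. The main obstacle is the uniform strong convexity of $P_{\Omega^\pi}$: in $\RR^n$ the Hessian is the identity and in $\mathbb H^n$ strong convexity is automatic from nonpositive curvature, but in $\mathbb S^n_+$ one has to exploit that $\Omega^\pi$ still lies in a geodesic ball of radius strictly less than the conjugate radius, so that the squared distance remains strictly convex with a quantitative lower bound on its Hessian depending only on $n$, $\rho$ and $|S|_g$.
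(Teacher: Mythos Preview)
Your approach is genuinely different from the paper's, and it is worth spelling out the contrast.

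\textbf{Choice of $\mathcal O$.} The paper does \emph{not} take $\mathcal O$ to be the Riemannian center of mass. Instead it fixes an orthonormal frame $\{e_1,\dots,e_n\}$ at $\origin$ and sets $\mathcal O=\bigcap_{i=1}^n \pi_{e_i}$, the common intersection of the $n$ critical hyperplanes in the coordinate directions (with a short argument that this intersection is nonempty in $\mathbb H^n$ and $\mathbb S^n_+$). Both constructions are legitimate candidates; yours has the advantage of being canonical, while the paper's is tailored to the moving-planes structure.

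\textbf{The intermediate volume bound.} Both arguments hinge on the estimate $|\Omega\triangle\Omega^\pi|_g\le C\,\deficit(\Omega)$. The paper obtains it via the cap inequality $|\Omega_v|_g\ge \tfrac12|\Omega|_g-C\,\deficit(\Omega)$; your tube-volume reasoning reaches the same conclusion. From here the arguments diverge: the paper composes the $n$ coordinate reflections to form the point reflection $\mathcal R$ about $\mathcal O$, shows $|\Omega\triangle\mathcal R(\Omega)|_g\le C\,\deficit(\Omega)$, and then runs a monotone volume-slicing argument (the $\mu_k$'s, following \cite[Lemma~4.1]{CFMN}) to bound the signed distance from $\mathcal O$ to $\pi_v$ for an \emph{arbitrary} direction $v$. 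You instead compare $\nabla P_\Omega$ with $\nabla P_{\Omega^\pi}$ and invoke strong convexity of $P_{\Omega^\pi}$.

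\textbf{Where your argument is incomplete.} The convexity route is clean in $\RR^n$ (Hessian $=\mathrm{Id}$) and in $\mathbb H^n$ (Hessian $\ge\mathrm{Id}$), but the spherical step you flag as ``the main obstacle'' is a real gap, not a formality. To pass from $|\nabla P_{\Omega^\pi}(\mathcal O)|_{\mathcal O}\le C\,\deficit(\Omega)$ to $d(\mathcal O,\sigma_{\pi_v}(\mathcal O))\le C\,\deficit(\Omega)$ you need $\mathrm{Hess}\,P_{\Omega^\pi}\ge c\,\mathrm{Id}$ uniformly along the geodesic from $\mathcal O$ to $\sigma_{\pi_v}(\mathcal O)$, which in $\mathbb S^n$ requires $d(x,a)$ bounded away from $\pi/2$ for every $x$ on that geodesic and every $a\in\Omega^\pi$. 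Nothing in the hypotheses $n,\rho,|S|_g$ forces $\diam(\Omega)$ to be small compared with $\pi/2$, and the reflected set $\Omega^\pi=\sigma_{\pi_v}(\Omega)$ need not even lie in $\mathbb S^n_+$; a priori the relevant distances can be of order $3\,\diam(\Omega)$. So a uniform lower bound on the Hessian with constants depending only on $n,\rho,|S|_g,\mathcal K$ is not available without further argument. The paper's slicing argument sidesteps this entirely: it uses only volume monotonicity of the slabs between successive moving hyperplanes and the bound $|\Omega\triangle\mathcal R(\Omega)|_g\le C\,\deficit(\Omega)$, and therefore works uniformly across all three space forms without any convexity hypothesis on $P_\Omega$.

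In short: your strategy is correct in $\RR^n$ and $\mathbb H^n$ and conceptually appealing, but in $\mathbb S^n_+$ the quantitative strong-convexity input is missing, and supplying it with the right dependence of constants is not immediate. The paper's argument trades elegance for robustness by avoiding the Hessian of $P_\Omega$ altogether.
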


\begin{proof} 
We fix an orthonormal basis $\{e_1,\dots,e_n\}$ of the tangent space at the ``origin'' $\origin$ and we consider the corresponding critical hyperplanes $\pi_{e_i}$.  We define an approximate center of symmetry $\mathcal{O}$ as follows: 
$$
\mathcal O:= \bigcap_{i=1}^n \pi_{e_i} \,.
$$
We notice that in the Euclidean case $\mathcal O$ is well-defined. Although in the hyperbolic space $n$ orthogonal hyperplanes do not always intersect, we can work as in 
\cite{INDIANA}[Lemma 6.1] and showing that \eqref{luigione0} implies the existence of $\mathcal{O}$. In $\mathbb S^{n}_+$ the existence of $\mathcal{O}$ is always guaranteed. Indeed
every $\pi_{e_i}$ is given by the intersection of a plane $\Pi_{e_i}$ of $\RR^{n+1}$ with $\mathbb S^{n}_+$ and the intersection of all the $\Pi_{e_i}$'s is a straight line $r$ which, by construction, can not lie in the plane $\{x_{n+1}=0\}$; hence  $\mathcal O=r\cap \mathbb S^{n}_+ \neq \O$ . 

Let $\mathcal{R}$ be the reflection about $\mathcal O$. Note that 
$$
\mathcal{R}(p) = \pi_{e_1} \circ \cdots \circ \pi_{e_n} (p)\,,
$$
where we identify $\pi_{e_i}$ with the reflection about the corresponding hyperplane.

Let $v$ be a fixed direction and let $\Omega_v$ be the corresponding maximal cap. Since $\Omega$ satisfies $(ASP)$, we have
\begin{equation} \label{aereo}
|\Omega_v|_g \geq \frac{|\Omega|_g}{2} - C \deficit(\Omega) \,,
\end{equation}
for some constant $C$ depending only on $n,\rho,|S|_g$ and $\mathcal{K}$. Moreover we have  
\begin{equation} \label{aereo2}
|\Omega \triangle \Omega^\pi |_g = 2 (|\Omega|_g - 2 |\Omega_v|_g) \leq 4C \deficit(\Omega) \,,
\end{equation}
where $\Omega \triangle \Omega^\pi $ denotes the symmetric difference between $\Omega$ and $\Omega^\pi$. Next we work as in lemma 4.1 in \cite{CFMN}. 
Here we only sketch the argument referring to \cite{CFMN} for details (see also \cite{JEMS}). 

Without lost of generalities, we may assume $\mathcal O\in \pi_{v,m_v-\mu}$, for some $\mu>0$ and for $k\in \mathbb N$ we define 
$$
\mu_k= \big| \{ p \in  \Omega \cap \pi_{v,s}:\ m_v + (k-1)\mu <s<m_{v} + k \mu \} \big|_g\,.
$$
By construction $\mu_k$ is decreasing and, in particular,
$$
\mu_k\leq \mu_0:= \big| \{ \Omega \cap \pi_{v,s}:\  m_v-\mu <s<m_{v}  \} \big|_g \,.
$$   
Moreover, $\mu_0$ is bounded by $C\deficit(\Omega)$. Indeed, formula \eqref{aereo} yields 
$$
|\Omega \triangle \mathcal{R}(\Omega)|_g \leq C \deficit(\Omega) \,,
$$
and then we obtain
$$
|\Omega \cap \mathcal{R} (\Omega_{v}) |_g\geq |\Omega_{v}|_g - |\Omega \triangle \mathcal{R}( \Omega)|_g \geq \frac{|\Omega|_g}{2} - C \deficit(\Omega) \,.
$$
Since   
$$
\mathcal{R}(\Omega_{v}) \subset \bigcup_{s<0} \pi_{v,m_{v}-s} \,,
$$
we obtain that 
\begin{equation*} 
\mu_0:= \big| \{ \Omega \cap \pi_{v,s}:\  m_v-\mu <s<m_{v}  \} \big|_g \leq C \deficit(\Omega) \,.
\end{equation*}
Therefore 
\begin{equation}\label{indiana}
\mu_k\leq C\deficit(\Omega)
\end{equation}
for every $k$ in $\NN$. From \eqref{aereo} we get 
$$
\frac{|\Omega|_g}{2} - C\deficit(\Omega) \leq |\Omega_v |_g\leq \sum_{k=0}^{k_0} \mu_k  \leq k_0 \mu_0 \leq \frac{\diam(\Omega)}{m_v} C \deficit(\Omega)
$$
where $k_0$ is the integer part of $\tfrac{\diam(\Omega)}{m_v}$.  
From Proposition A.1 in \cite{INDIANA} we have
$$
m_v \leq C \deficit(\Omega)\,,
$$
where $C$ depends only on $n$, $\rho,|S|_g$ and $\mathcal{K}$, as required. 
\end{proof}

\begin{proof}[Proof of theorem \ref{thm_novaga}]
Let $\mathcal{O}$ be as in lemma \ref{lemmarta} and define
$$
r=\sup\{s>0:\ B^d_s(\mathcal O)\subset \Omega\} \quad \textmd{ and } \quad R=\inf\{s>0:\ B^d_s(\mathcal O)\supset \Omega\} \,,
$$
so that $B^d_r(\mathcal{O}) \subseteq \Omega \subseteq B^d_R(\mathcal{O})$.

Let $p,q \in S$ be such that $d(p,\mathcal O)=r$ and $d(q,\mathcal O)=R$. We can assume that $p \neq q$ (otherwise $r=R$ and $S$ is a round sphere). Let $v\in T_{\origin} \mathbb M^n$ be the direction 
$$
v:=\frac{1}{d(p,q)}\tau_{p}^{\origin}(\exp_p^{-1}(q)) 
$$
and $\pi_v$ the critical hyperplane in the $v$-direction. We denote by $\gamma$ the geodesic path passing through $p$ and $q$ and let $s_p$ and $s_q$ in $\RR$ be such that 
$$
\gamma(s_p)=p \mbox{ and } \gamma(s_q)=q\, .
$$
Let  $z\in \pi_v$ be such that $d(z,\mathcal O)=d(\mathcal O,\pi_v)$. We have 
$$
p\in \pi_{v,s_p}\,,\quad q\in \pi_{v,s_q}\,,\quad s_q=s_p+t\,;
$$
see section \ref{moving} for the definition of $\pi_{v,s_p}$ and $\pi_{v,s_q}$.
We first show that $d(q,z)\leq d(p,z)$. Assume by contradiction that $d(q,z) > d(p,z)$. Since $q$ and $p$ belong to a geodesic orthogonal to the hyperplanes $\pi_{v,s}$ and $s_p< s_q$, then $s_q>m_v$. Since $\pi_v=\pi_{v,m_v}$ corresponds to the critical position of the method of the moving planes in the direction $v$, we have that $\gamma(s) \in \Omega$ for any $s \in (m_v,s_q)$. Since $s_p<s_q$ we have that $|s_p-m_v| \geq |s_q-m_v|$ and since $\gamma$ is orthogonal to $\pi_v$ we obtain $d(q,z)\leq d(p,z)$, which gives a contradiction. 
Since $d(q,z)\leq d(p,z)$ we have
$$
r \geq R - d(\mathcal O, z) = R - d(\mathcal O, \pi_v) 
$$
and lemma \ref{lemmarta} implies \eqref{11}.
\end{proof}

%
\section{Proof of the main results} \label{section_proof_main2}
We have all the ingredients to prove theorem \ref{main} and corollary \ref{main2}. 

\begin{proof}[Proof of theorem \ref{main}]
Let $S=\partial \Omega$ be a $C^2$-regular, connected, closed hypersurface embedded in $\mathbb M^n_{+}$ satisfying a uniform touching ball condition of radius $\rho$, 
where $\Omega$ is a relatively compact domain. Theorem \ref{thm approx symmetry 1 direction} implies that there exit $\epsilon$ and $C$ positive such that if 
$$
{\osc}({\sf H}_S) \leq \ep,
$$
then 
$$
d(p,\Sigma \cup \Sigma^{\pi_v}) \leq C{\rm osc}({\sf H}_S)\,,
$$
for every $p \in \Omega$.
\end{proof}

\begin{proof}[Proof of corollary \ref{main2}]
The proof consists in one more application of the method of the moving planes and it is in the spirit of \cite[Theorems 1.2 and 1.5]{CFMN}.
Let $B^d_{r}(\mathcal O)$ and $B^d_{R}(\mathcal O)$ be as in theorem \ref{thm_novaga} and let $0<t<r- C\deficit(\Omega)$. We aim at proving that for any $p\in S$, there exist two cones with vertex at $p$ and of fixed aperture, one contained in $\Omega$ and one contained in the complementary of $\Omega$. The first cone $C^-(p)$, is obtained by considering all the geodesic path connecting $p$ to the boundary of $B^d_{t}(\mathcal O)$ tangentially. The second cone $C^+(p)$ is the reflection of $\mathcal C^-(p)$ with respect to $p$. We show that $C^-(p)$ is contained in $\Omega$ and an analogous argument shows that $\mathcal C^+(p)$ is contained in the complementary of $\Omega$. We assume, by contradiction, that $p\notin B^d_{r}(\mathcal O)$ (otherwise the claim is trivial) and that there exists a point $q \in \mathcal C^-(p) \cap \partial B^d_{t}(\mathcal O)$ such that the geodesic path $\gamma$ connecting $q$ to $p$ is not contained in $\Omega$. We apply the method of the moving planes in the direction $v$ defined by
$$
v:=\frac{1}{d(p,q)}\tau_{q}^{\origin}(\exp_{q}^{-1}(p))\, .
$$
Since $\gamma$ is not contained in $\Omega$, the method of the moving planes stops before reaching $q$ and one can prove that
$$
d(\mathcal O, \pi_\omega)\geq r-t \,.
$$
Since $0<t<r- C\deficit(\Omega)$, from lemma \ref{lemmarta}, we obtain 
$$
C\deficit(\Omega)< r-t \leq d(\mathcal O, \pi_\omega)  \leq C\deficit(\Omega) \,,
$$
which gives a contradiction. The argument above shows also that for any $p \in S$ the geodesic path connecting $p$ to $\mathcal O$ is contained in $\Omega$. This implies that there exists a $C^2$-regular map $\Psi: \partial B_{r}^d(\mathcal O) \to \mathbb{R}$ such that
$$
F(p) = \exp_x(\Psi(p)N_p) \,,
$$
defines a $C^2$-diffeomorphism  from $B^d_r(\mathcal O)$ to $S$. By choosing $t=r - \sqrt{C\deficit(\Omega)}$ we have that for any $p \in S$ there exists a uniform cone of opening $\pi - \sqrt{C\deficit(\Omega)}$ with vertex at $p$ and axis on the geodesic connecting $p$ to $\mathcal O$. This implies that $\Psi$ is locally Lipschitz and the bound \eqref{fristLipbound} on $\|\Psi\|_{C^1}$ follows (see also \cite[Theorem 1.2]{CFMN}).
\end{proof}

\begin{remark}{\em
We observe that if ${\sf H}_S=H$ is the mean curvature of $\partial \Omega$,  then \eqref{fristLipbound} can be improved and we can obtain the optimal linear bound $\|\Psi\|_{C^{1,\alpha}}\leq C \oscH$ by using elliptic regularity.  
Indeed, let $\phi\colon U\to \partial B^d_r(\mathcal O)$ be a local parametrization of $\partial B^d_r(\mathcal O)$, where $U$ is an open set of $\RR^{n-1}$. From the proof of corollary \ref{main2}, $F\circ \phi$ gives a local parametrization of $S$. A standard computation yields that 
$$
L(\Psi\circ \phi)=H(F\circ\phi)-H_{\mathsf{B}_r}
$$
where $H_{\mathsf{B}_r}$ is the mean curvature of $\partial \mathsf{B}_r$ and $L$ is an elliptic operator which, thanks to the bounds on $\Psi$ above, can be seen as a second order linear operator acting on $\Psi\circ \phi$. Then \cite[Theorem 8.32]{GT} implies the bound on the $C^{1,\alpha}$-norm of $\Psi$, as required.}
\end{remark}

\medskip

\section*{List of symbols}

In this last section we collect some symbols we used in the paper.

\smallskip 
\noindent $\mathbb M^{n}$ denotes one of the following manifolds: the Euclidean space $\RR^n$, the hyperbolic space $\mathbb H^n$, the hemisphere $\mathbb S^n$.

\smallskip 
\noindent $\mathbb M^{n}_+$ denotes one of the following manifolds: the Euclidean space $\RR^n$, the hyperbolic space $\mathbb H^n$, the hemisphere $\mathbb S^n_+$.

\smallskip 
\noindent $S$  denotes a $C^2$-regular, connected, closed hypersurface embedded in $\mathbb M^n_{+}$, $\Omega \subset \mathbb M^n_+$ denotes a relatively compact connected open set such that $\partial\Omega=S$.

\smallskip 
\noindent $\kappa_1,\dots \kappa_{n-1}$ denote the principal curvatures of $S$ ordered increasingly.

\smallskip 
\noindent $H$ denotes the mean curvature of $S$.

\smallskip 
\noindent ${\sf H}_S$ denotes a more general function of the principal curvaures of $S$ (see the definition in the introduction).

\smallskip 
\noindent ${\osc}({\sf H}_S)$ denotes the oscillation of ${\sf H}_S$ on $S$.

\smallskip 
\noindent $\exp_x$ denotes the exponential map of a generic Riemannian manifold $(M,g)$ at $x\in M$.  

\smallskip 
\noindent ${\rm inj}$ denotes the injectivity radius.

\smallskip 
\noindent $\mathcal O$ denotes either the center of mass (in section \ref{moving}) or the approximate center of mass (in all other sections).

\smallskip 
\noindent $O$ denotes either the origin of $T_{p}S$ (in section \ref{moving}) or the origin in $\mathbb{R}^{n-1}$ (in all other sections).

\smallskip 
\noindent $\origin$ denotes the origin in $\RR^n$, $e_n$
in $\mathbb H^n$ and the north pole in $\mathbb S^n_+$.

\smallskip 
\noindent $T_{p}M$ dentoes the tangent space to a generic Riemannian manifold $(M,g)$ at $p\in M$ and $T_p S$ denotes the tangent hyperplane to $S$ at $p$.

\smallskip 
\noindent $d$ denotes the geodesic distance in $\mathbb M^{n}$ induced by the Riemannian metric and $d_S$ denotes the distance in $S$.

\smallskip 
\noindent $B_r$ denotes either a Euclidean ball of radius $r$ centred at the origin $O$ of $T_{p}S$ (in section \ref{moving}) or a Euclidean ball of radius $r$ centred at the origin $O$ of $\mathbb{R}^{n-1}$ (in all other sections).

\smallskip 
\noindent $B^d_{r}(p)$ denotes the ball, with respect to $d$, of radius $r$ in $\mathbb M^{n}$ centred at $p\in\mathbb M^{n}$.

\smallskip 
\noindent $\mathcal B_{r}(p)$ denotes the geodesic ball of $S$ of radius $r$ centred at $p\in S$.


\smallskip 
\noindent $\varphi_{p}\colon \mathbb M^{n}_+\to \RR^n$, for $p\in S$, denotes the following function whose definition depends on the geometry of $\mathbb M^n$: 
\begin{itemize}
	\item if $\mathbb M^n$ is $\RR^n$, $\varphi_{p}\in {\rm SO}(n)\rtimes \RR^n$ and it is such that $\varphi_p(p)=0$ and $\varphi_{p*|p}\left(T_pS\right)=\{x_n=0\}$;
	
	\vspace{0.1cm}
	\item if $\mathbb M^n$ is $\mathbb H^n$, $\varphi_p$ is an orientation preserving isometry of $\mathbb H^n$ such that  $\varphi_{p}(p)=e_n$, $\varphi_{p*|p}\left(T_pS\right)=\{x_{n}=0\}$;
	
	\vspace{0.1cm}
	\item if $\mathbb M^n$ is $\mathbb S^n$, $\varphi_{p}$ is the stereographic projection form the antipodal point to $p$ restricted to $\mathbb S^n_+$ composed with a rotation of $\mathbb R^n$ in order to have $\varphi_{p*|p}\left(T_pS\right)=\{x_{n}=0\}$.   
\end{itemize}

\smallskip 
\noindent $\mathcal U_r(p)$ denotes the open neighborhood of $p$ in $S$ such that $\varphi_p\left(\mathcal U_r(p)\right)$ is the (Euclidean) graph of a $C^2$-function $u\colon B_r\to \RR$ defined in the ball of radius $r$ of $\RR^{n-1}$ centered at the origin.

\smallskip 
\noindent $\rho$ denotes the radius of the touching ball condition of $S$ and $\rho_1$ denotes the following quantity:
\begin{itemize}
	\item $\rho_1=\rho$, if $\mathbb M^n=\RR^n$;
	
	\vspace{0.1cm}
	\item $\rho_1=(1-{\rm e}^{-\rho}\sinh\rho){\rm e}^{-\rho}\sinh\rho$,  if $\mathbb M^n=\mathbb H^n$;
	
	\vspace{0.1cm}
	\item $\rho_1=\tfrac{\rho}{\pi}$, if $\mathbb M^n=\mathbb S^n$\,.
\end{itemize}

\smallskip 
\noindent $\tau_p^q$ denotes the parallel transport along the unique geodesic path connecting $p$ to $q$.

\smallskip 
\noindent $|\cdot|_p:=g_{p}(\cdot,\cdot)^{1/2}$ where $p\in \mathbb M^n$. 
	
\smallskip 
\noindent $|\cdot|$ denotes the Euclidean norm.	

\smallskip 
\noindent $|\cdot|_g$ denotes either the volume with respect to the Riemannian metric $g$ or the area with respect to the Riemannian metric $g$.

	
\smallskip 
\noindent $N$ denotes the inward unitary normal vector field on $S$.     

\smallskip 
\noindent $\nu$ denotes the Euclidean inward unitary normal vector field on $S$.

\smallskip 
\noindent $\pi$, $S_+$, $S_-$ and $p_0$ see section \ref{sect5}.

\smallskip 
\noindent $S_+^\pi$ denotes the reflection of $S_+$ with respect to $\pi$.

\smallskip 
\noindent $\Sigma$ and $\hat \Sigma$  denote the connected components of $S_+^\pi$ and $S_-$ containing $p_0$, respectively.


%
%

%
%

\end{document}